\newtheorem{theorem}{\sc Theorem}[section]
\newtheorem{proposition}[theorem]{\sc Proposition}
\newtheorem{lemma}[theorem]{\sc Lemma}
\newtheorem{corollary}[theorem]{\sc Corollary}
\theoremstyle{definition}
\newtheorem{definition}[theorem]{\sc Definition}
\theoremstyle{remark}
\newtheorem{remark}[theorem]{\sc Remark}
\newenvironment{invisible}{{\noindent\sc \colorbox{yellow}{Invisible:}\;}\color{gray}}{\medskip}
\newcommand{\Cc}{\mathcal{C}}
\newcommand{\Hc}{\mathrm{Hopf}_{\mathrm{coc}}(\mathrm{Vec}_{G})}
\begin{document}
\title[Commutators and crossed modules of color Hopf algebras]{Commutators and crossed modules of color Hopf algebras}
\author{Andrea Sciandra}
\address{
\parbox[b]{\linewidth}{University of Turin, Department of Mathematics ``G. Peano'', via
Carlo Alberto 10, I-10123 Torino, Italy}}
\email{andrea.sciandra@unito.it}

\subjclass[2020]{Primary 18E13; Secondary 18D40; 18G45;  18N50; 16T05; 16S40}
\keywords{Semi-abelian categories, Commutators, Semi-direct products, Crossed modules, Simplicial Hopf algebras, Zassenhaus Lemma, Hall's criterion, Color Hopf algebras}

\maketitle

\begin{abstract}
In a previous paper we showed that the category of cocommutative color Hopf algebras is semi-abelian in case the group $G$ is abelian and finitely generated and the characteristic of the base field is different from 2 (not needed if $G$ is finite of odd cardinality). 
Here we describe the commutator of cocommutative color Hopf algebras and we explain the Hall's criterion for nilpotence and the Zassenhaus Lemma. Furthermore, we introduce the category of color Hopf crossed modules and we explicitly show that this is equivalent to the category of internal crossed modules in the category of cocommutative color Hopf algebras and to the category of simplicial cocommutative color Hopf algebras with Moore complex of length 1.  
\end{abstract}

\tableofcontents

\maketitle

\section{Introduction}
The notion of semi-abelian category was introduced in \cite{Semi-ab} in order to give a categorical “generalization” of the category of groups, as abelian categories “generalize” abelian groups. Among the classical examples of semi-abelian categories there are the categories of groups, Lie algebras and associative rings.

In \cite[Theorem 6.1]{As} we showed that the category $\Hc$ of cocommutative color Hopf algebras is semi-abelian if the abelian group $G$ is finitely generated and the characteristic of the base field $\Bbbk$ is different from 2 (not needed if $G$ is finite of odd cardinality). This result generalizes \cite[Theorem 2.10]{MG} given for the category of ordinary cocommutative Hopf algebras over a field $\Bbbk$ of arbitrary characteristic, which can be seen as a special case of our result by considering $G$ equal to the trivial group. Moreover, taking $G=\mathbb{Z}_{2}$ and $\mathrm{char}\Bbbk\not=2$, we obtain that the category of cocommutative super Hopf algebras, extensively used in Mathematics and Physics, is semi-abelian.

Semi-abelian categories provide a good categorical framework to develop an approach to commutator theory and they present natural notions of semi-direct product \cite{BouJan}, internal action \cite{BoJa} and crossed module \cite{Ja}. Moreover, some famous theorems for groups have categorical generalizations in a semi-abelian category, like the Hall's criterion for nilpotence \cite{Gray} and the Zassenhaus Lemma \cite{OlSt}. Semi-abelian categories are also suitable to study (co)homology of non-abelian structures \cite{EV2} and the Moore complex structure \cite{bou2}, \cite{EV}. Therefore, we will study some of these properties for the category $\Hc$, under the previous assumptions on $G$ and $\Bbbk$. \medskip

The structure of the paper is the following. First, for the convenience of the reader, we include preliminaries about the categorical-algebraic properties which we will explore for the category $\Hc$. In Section 3 we give an explicit description of commutators in the category $\Hc$, using the action $\xi$ defined in \cite{As}. Then we explain the Hall's criterion for nilpotence and the Zassenhaus Lemma for the category $\Hc$ in Section 4. In Section 5 we revise the equivalence between internal actions and split extensions in $\Hc$ and, by introducing the notion of color Hopf crossed module, we show the equivalence between the categories of color Hopf crossed modules and internal crossed modules in $\Hc$. Finally, in Section 6, we study the equivalence between the categories of color Hopf crossed modules and simplicial cocommutative color Hopf algebras with Moore complex of length one. Observe that generalizations of the equivalences in Sections 5 and 6 can be found in the very general results given in \cite{Bohm} and \cite{Bohm2}, but here we give an explicit description of these in reference to the category $\Hc$ which can also be used to investigate further other topics like crossed squares and 2-crossed modules of cocommutative color Hopf algebras. \medskip

\noindent\textit{Notations and conventions}. Sometimes the identity morphisms in a category $\Cc$ are denoted by 1. If the category $\Cc$ is pointed, the zero object is denoted by $\mathbf{0}$ and the zero morphism by 0. Given a morphism $f:A\to B$ in $\Cc$, the kernel and the cokernel of $f$, if they exist, are denoted by $\mathrm{ker}(f):\mathrm{Ker}(f)\to A$ and $\mathrm{coker}(f):B\to\mathrm{Coker}(f)$ or $\mathrm{coker}(f):B\to B/A$. 
All vector spaces are understood to be over the field $\Bbbk$ and by linear maps we mean $\Bbbk$-linear maps. The unadorned tensor product $\otimes$ stands for $\otimes_{\Bbbk}$. All linear maps whose domain is a tensor product will usually be defined on generators and understood to be extended by linearity. Algebras over $\Bbbk$ will be associative and unital.

\section{Preliminaries}
In this section we recall some categorical-algebraic properties, which we will explore for the category $\Hc$. 
A category $\Cc$ is \textit{semi-abelian} if it is pointed, finitely cocomplete, (Barr)-exact and protomodular \cite{Semi-ab}. Thus, $\Cc$ has zero object, all finite colimits and limits, so that the protomodularity is equivalent to the fact that the Split Short Five Lemma holds in $\Cc$ (see \cite[Proposition 3.1.2]{BorBou}). Moreover, every morphism in $\Cc$ factorizes as a regular epimorphism (i.e., a coequalizer of a pair of morphisms in $\Cc$) followed by a monomorphism and this factorization is stable under pullbacks (i.e., $\Cc$ is regular) and any equivalence relation is the kernel pair of a morphism in $\Cc$. We refer the reader to \cite{BorBou} for many details about semi-abelian categories. 
\\

\noindent\textbf{The Huq commutator}. 
In any pointed category $\Cc$ with binary products, one says that two subobjects $i:X\to A$ and $j:Y\to A$ of the same object $A$ \textit{commute} 
(in the sense of Huq \cite{Huq}) if there exists an arrow $p$ making the following diagram commute:
\begin{equation}\label{commutator}
    \begin{tikzcd}
	X & X\times Y & Y \\
	& A
	\arrow[from=1-1, to=1-2, "\langle{1,0}\rangle"]
	\arrow[from=1-3, to=1-2, "\langle{0,1}\rangle"']
	\arrow[from=1-1, to=2-2, "i"']
	\arrow[from=1-3, to=2-2, "j"]
	\arrow[dashed, from=1-2, to=2-2, "p"]
\end{tikzcd}
\end{equation}
If the category $\Cc$ is protomodular, such an arrow $p$ is unique, when it exists \cite{BoGr}. In a protomodular category $\Cc$ with finite limits, a subobject $u:S\to A$ of $A$ is called \textit{normal} if it is normal to an equivalence relation on $A$ (see \cite[Definitions 3.2.1 and 3.2.9]{BorBou}) and, if $\Cc$ is pointed, exact and protomodular, then $u$ is normal if and only if it is the kernel of a morphism in $\Cc$, as it is shown in \cite[Proposition 3.2.20]{BorBou}. In particular, the latter holds true for a semi-abelian category. In a semi-abelian category $\Cc$, the \textit{Huq commutator} of two normal subobjects $i:X\to A$ and $j:Y\to A$ is the smallest normal subobject $g:B\to A$ of $A$ such that its cokernel $q:=\mathrm{coker}(g):A\to A/B$ has the property that the images $q(X)$ and $q(Y)$ by $q$ commute in the quotient as it is shown in the following diagram:
\[
\begin{tikzcd}
	&&&& q(X) & q(X)\times q(Y) & q(Y) \\
	& X && Y \\
	B && A &&& A/B
	\arrow[from=3-1, to=3-3, "g"']
	\arrow[from=2-2, to=3-3, "i"]
	\arrow[from=2-4, to=3-3, "j"']
	\arrow[from=3-3, to=3-6, "q"']
	\arrow[from=1-5, to=1-6]
	\arrow[from=1-7, to=1-6]
	\arrow[dashed, from=1-6, to=3-6]
	\arrow[from=1-7, to=3-6]
	\arrow[from=1-5, to=3-6]
\end{tikzcd}
\]
Usually it is denoted by $[X,Y]_{\mathrm{Huq}}\to A$.
Recall also that an object $A$ in $\Cc$ is called \textit{nilpotent} \cite{Huq} if there exists a non-negative integer $n$ such that $\gamma^{n}_{A}(A)=0$ where, denoted by $\mathrm{Norm}(A)$ the class of normal subobjects of $A$, the map $\gamma_{A}:\mathrm{Norm}(A)\to\mathrm{Norm}(A)$ sends a normal subobject $X$ of $A$
to $[A,X]_{\mathrm{Huq}}$. 
The least such $n$ is the nilpotency class of $A$. 

If a semi-abelian category $\Cc$ is also algebraically coherent \cite{CGV} then a generalization of the Hall's criterion for nilpotence holds:

\begin{theorem}(cf. \cite[Theorem 3.4]{Gray})\label{nilpotence}
Let $\Cc$ be an algebraically coherent semi-abelian category and $p:E\to B$ 
a regular epimorphism  where $B$ is a nilpotent object in $\Cc$. If the kernel of $p$ is contained in the Huq commutator $[N,N]_{\mathrm{Huq}}$ of a nilpotent normal subobject $N$ of $E$, then $E$ is nilpotent. Furthermore, if $N$ is of nilpotency class $c$ and $B$ is of nilpotency class $d$, then $E$ is of nilpotency class at most $\frac{c(c+1)}{2}(d-1)+c$.
\end{theorem}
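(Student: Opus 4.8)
The plan is to transcribe P.~Hall's classical group-theoretic proof into categorical language: every commutator manipulation Hall uses is available in an algebraically coherent semi-abelian category $\Cc$. I would first record three facts about the Huq commutator in such a $\Cc$. (a)~Regular epimorphisms preserve it, so $p([X,Y]_{\mathrm{Huq}})=[p(X),p(Y)]_{\mathrm{Huq}}$ for a regular epi $p$. (b)~The commutator of two normal subobjects is again normal and is computed locally, i.e.\ independently of the choice of an object containing both subobjects. (c)~Algebraic coherence yields the distributivity law $[X\vee Y,Z]_{\mathrm{Huq}}=[X,Z]_{\mathrm{Huq}}\vee[Y,Z]_{\mathrm{Huq}}$ together with the ternary ``three subobjects'' relation $[[X,Y],Z]_{\mathrm{Huq}}\le[[X,Z],Y]_{\mathrm{Huq}}\vee[X,[Y,Z]]_{\mathrm{Huq}}$ for normal subobjects (see \cite{CGV}). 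Below, $[U,\underbrace{V,\dots,V}_{k}]_{\mathrm{Huq}}$ denotes the left-nested iterated commutator $[\cdots[[U,V],V]\cdots,V]$ with $k$ entries $V$.

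\emph{First reduction and set-up.} Since $B$ is nilpotent of class $d$, $\gamma^{d}_{B}(B)=0$; applying (a) to $p$ gives $p(\gamma^{d}_{E}(E))=\gamma^{d}_{B}(B)=0$, hence $\gamma^{d}_{E}(E)\le\mathrm{Ker}(p)\le[N,N]_{\mathrm{Huq}}$. Using (b), set $K_{0}:=N$ and $K_{i+1}:=[N,K_{i}]_{\mathrm{Huq}}$; then each $K_{i}$ is normal in $E$, one has $K_{i}=\gamma^{i}_{N}(N)$ and in particular $K_{1}=[N,N]_{\mathrm{Huq}}$, and $K_{c}=0$ because $N$ is nilpotent of class $c$.

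\emph{Core estimate.} The heart of the proof is the inclusion
\[
\gamma^{\,n_{i}}_{E}(E)\ \le\ K_{i},\qquad n_{i}:=\tfrac{i(i+1)}{2}(d-1)+i,\qquad 1\le i\le c .
\]
For $i=1$ this is $\gamma^{d}_{E}(E)\le K_{1}$ (since $n_{1}=d$), i.e.\ the first reduction; for $i=c$ it reads $\gamma^{\,n_{c}}_{E}(E)\le K_{c}=0$ with $n_{c}=\tfrac{c(c+1)}{2}(d-1)+c$, which is exactly the asserted bound on the nilpotency class of $E$. Since $n_{i+1}=n_{i}+(i+1)(d-1)+1$, the subobject $\gamma^{\,n_{i+1}}_{E}(E)$ arises from $\gamma^{\,n_{i}}_{E}(E)$ by iterating $X\mapsto[E,X]_{\mathrm{Huq}}$ a further $(i+1)(d-1)+1$ times; hence, by the inductive hypothesis and monotonicity of the commutator, the induction step reduces to the auxiliary lemma
\[
[\,K_{i},\underbrace{E,\dots,E}_{(i+1)(d-1)+1}\,]_{\mathrm{Huq}}\ \le\ K_{i+1}\qquad(i\ge0),
\]
which I would prove by a second induction, on $i$. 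The base case $i=0$, namely $[N,\underbrace{E,\dots,E}_{d}]_{\mathrm{Huq}}\le[N,N]_{\mathrm{Huq}}$, follows from the first reduction by bracketing $N\le E$ repeatedly with $E$ to land inside $\gamma^{d}_{E}(E)$. For the step one peels off the outermost copy of $N$ in $K_{i+1}=[N,K_{i}]_{\mathrm{Huq}}$, applies (c) to migrate the entries $E$ onto the inner commutator $K_{i}$ and onto the single factor $N$, uses distributivity to split the resulting joins, invokes the inductive hypothesis on the ``$K_{i}$--part'' and the base case on the ``$N$--part'', and verifies that $(i+2)(d-1)+1$ entries in total suffice and that the auxiliary joins all collapse into $K_{i+2}$.

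\emph{Main obstacle.} The delicate point is precisely this last induction. In Hall's setting one has equalities of subgroups and the exact three subgroups lemma, whereas (c) only gives inclusions and produces a join at each use, so one must check that every error term generated is again of a shape already controlled by one of the two inductions --- so that the spurious joins are absorbed into $K_{i+2}$ --- and that the bracket count is not spoiled along the way. Everything else (the first reduction, the normality and eventual vanishing of the $K_{i}$, and reading off the class bound from the core estimate at $i=c$) is routine once (a)--(c) are in hand; for the complete argument we refer to \cite[Theorem 3.4]{Gray}.
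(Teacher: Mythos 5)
The paper does not prove Theorem \ref{nilpotence} at all: it is recalled as a known result, quoted directly from \cite[Theorem 3.4]{Gray}, and is only later instantiated in $\Hc$ (using that $\Hc$ is algebraically coherent and that $[N,N]_{\mathrm{Huq}}=[N,N]$ there). So there is no in-paper argument to compare yours with; the relevant comparison is with Gray's proof, and your sketch does follow that strategy in outline — Hall's classical induction along the lower central series of $N$, transported to the categorical setting via the results of \cite{CGV} (preservation of binary joins by the commutator and the Three Subobjects Lemma), with the bound $\frac{c(c+1)}{2}(d-1)+c$ read off from the estimate $\gamma^{n_i}_E(E)\le K_i$ at $i=c$.

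As a proof, however, what you wrote is a sketch rather than an argument: the core induction step — exactly where Hall's commutator combinatorics and the bookkeeping behind the bound live — is only described (``peel off the outermost copy of $N$, migrate the entries $E$, verify that $(i+2)(d-1)+1$ entries suffice''), and you close by referring to \cite{Gray} for the complete argument, so nothing new is actually established. Moreover, your facts (a) and (b) are not free: preservation of the Huq commutator along regular epimorphisms and its independence of the ambient object (which you need for the identification $K_i=\gamma^i_N(N)$, i.e.\ for comparing commutators computed in $E$ with the lower central series of $N$) rest on the nontrivial result of \cite{CGV} that in an algebraically coherent semi-abelian category the Higgins commutator of normal subobjects is again normal, so that it coincides with the Huq commutator and is computed from the coproduct of the two subobjects; as stated, these points are asserted without justification. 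Since the paper itself treats this theorem purely as a citation, the appropriate handling is to do the same — cite \cite[Theorem 3.4]{Gray} — possibly keeping your outline as motivation, rather than presenting it as a proof.
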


\noindent\textbf{Semi-direct products}. Recall that, given a morphism $v:W\to Y$ in a semi-abelian category $\Cc$, the inverse image functor $v^{*}:\mathrm{Pt}_{Y}(\Cc)\to\mathrm{Pt}_{W}(\Cc)$ of the fibration of points is monadic, see \cite{BouJan} and \cite[Theorem 5.1.13]{BorBou}. From \cite{BouJan} and \cite[Definition 5.2.8]{BorBou} also recall that, given a semi-abelian category $\Cc$ and an object $G$ in $\Cc$, a $G$-\textit{algebra} is an algebra for the monad $\mathbf{T}_{G}$ corresponding to the monadic functor $\alpha_{G}^{*}:\mathrm{Pt}_{G}(\Cc)\to\Cc$, where $\alpha_{G}:\mathbf{0}\to G$ is the unique arrow from the zero object $\mathbf{0}$ of $\Cc$ to $G$. Given a $G$-algebra $(X,\xi)$, the \textit{semi-direct product} $(X,\xi)\rtimes G$ of $(X,\xi)$ and $G$ is the domain part $H$ of the point \begin{tikzcd}
	H & G
	\arrow[shift left=1, from=1-1, to=1-2, "p"]
	\arrow[shift left=1, from=1-2, to=1-1,"s"]
\end{tikzcd}
corresponding to $(X,\xi)$ through the equivalence $\mathrm{Pt}_{G}(\Cc)\cong \Cc^{\mathbf{T}_{G}}$. \\

\noindent\textbf{Reflexive multiplicative graphs and internal groupoids}. Recall from \cite{Ca} that a \textit{reflexive-multiplicative graph} in a category with pullbacks $\Cc$ is a diagram 
\begin{equation}\label{RMG}
\begin{tikzcd}
	A_{1}\times_{A_{0}}A_{1} & A_{1} && A_{0}
	\arrow[from=1-1, to=1-2, "m"]
	\arrow[shift right=2.5, from=1-2, to=1-4, "\gamma"']
	\arrow[shift left=2.5, from=1-2, to=1-4, "\delta"]
	\arrow["i"{description}, from=1-4, to=1-2]
\end{tikzcd}
\end{equation}
where $(A_{1}\times_{A_{0}}A_{1},\pi_1,\pi_2)$ is the pullback of the pair $(\delta,\gamma)$ in $\Cc$, such that $\delta\circ i=\gamma\circ i=\mathrm{Id}_{A_{0}}$ (i.e., it is a reflexive graph) and $m\circ(\mathrm{Id}_{A_{1}},i\circ\delta)=\mathrm{Id}_{A_{1}}=m\circ(i\circ\gamma,\mathrm{Id}_{A_{1}})$, where $(\mathrm{Id}_{A_{1}},i\circ\delta):A_{1}\to A_{1}\times_{A_{0}}A_{1}$ and $(i\circ\gamma,\mathrm{Id}_{A_{1}}):A_{1}\to A_{1}\times_{A_{0}}A_{1}$ are induced by the universal property of the pullback $A_{1}\times_{A_{0}}A_{1}$. Usually $\delta$ is called the  \textit{domain morphism} or \textit{source morphism}, $\gamma$ the  \textit{codomain morphism} or \textit{target morphism}, $i$ the \textit{identity morphism} and $m$ the \textit{multiplication}. 

A morphism of reflexive-multiplicative graphs is given by a pair of morphisms $(f_{1}:A'_{1}\to A_{1},f_{0}:A'_{0}\to A_{0})$ in $\Cc$ such that the four squares of corresponding arrows of the following diagram commute: 
\[
\begin{tikzcd}
	A'_{1}\times_{A'_{0}}A'_{1} & A'_{1} && A'_{0} \\
	A_{1}\times_{A_{0}}A_{1} & A_{1} && A_{0}
	\arrow[from=1-1, to=1-2, "m'"]
	\arrow[from=1-1, to=2-1, "f_{1}\times f_{1}"']
	\arrow[from=2-1, to=2-2, "m"']
	\arrow[shift right=2.5, from=2-2, to=2-4, "\gamma"']
	\arrow[shift left=2.5, from=2-2, to=2-4,"\delta"]
	\arrow["i"{description}, from=2-4, to=2-2]
	\arrow[shift right=2.5, from=1-2, to=1-4, "\gamma'"']
	\arrow[shift left=2.5, from=1-2, to=1-4,"\delta'"]
	\arrow["i'"{description}, from=1-4, to=1-2]
	\arrow[from=1-2, to=2-2, "f_{1}"]
	\arrow[from=1-4, to=2-4, "f_{0}"]
\end{tikzcd}
\]
A reflexive-multiplicative graph is called an \textit{internal category} if the multiplication $m$ further satisfies 
\[
\delta\circ m=\delta\circ\pi_{2},\ \ \gamma\circ m=\gamma\circ\pi_{1}\ \ \text{and}\ \ m\circ(\mathrm{Id}_{A_{1}}\times_{A_{0}}m)=m\circ(m\times_{A_{0}}\mathrm{Id}_{A_{1}}).
\]
Moreover, an internal category is called \textit{internal groupoid} if there exists a morphism $\iota:A_{1}\to A_{1}$ such that
\[
\delta\circ\iota=\gamma,\ \ \gamma\circ\iota=\delta,\ \ m\circ(\iota,\mathrm{Id}_{A_{1}})=i\circ \delta\ \ \text{and}\ \ m\circ(\mathrm{Id}_{A_{1}},\iota)=i\circ\gamma.
\]
We denote by RMG$(\Cc)$ the category of reflexive-multiplicative graphs in $\Cc$ with morphisms of reflexive-multiplicative graphs and by Grpd$(\Cc)$ the category of internal groupoids in $\Cc$, with morphisms of reflexive-multiplicative graphs. It is shown in \cite{Ca} that if $\Cc$ is Mal'tsev then the forgetful functor $F:\mathrm{Grpd}(\Cc)\to\mathrm{RMG}(\Cc)$ is an isomorphism of categories and the groupoid structure on a reflexive graph is unique whenever it exists. In \cite{Ja} G. Janelidze introduced the category XMod$(\Cc)$ of internal crossed modules in a semi-abelian category $\Cc$, equivalent to the category Grpd$(\Cc)$. \\

For any two equivalence relations over the same object of a category $\Cc$, there is a notion of connector \cite{BoGr} and, when such a connector exists, one says that the two equivalence relations \textit{centralize} (in the sense of Smith).
If two equivalence relations centralize in the sense of Smith then their normalizations commute in the sense of Huq, while the converse is not true in general (see \cite{BoGr}). If a category $\Cc$ is such that two equivalence relations centralize in the sense of Smith if and only if their normalizations commute in the sense of Huq, then $\Cc$ is said to satisfy the so-called condition (SH)
\cite{BoGr}. Note that any action representable category satisfies the 
condition (SH), since this is true more generally for any action accessible category \cite{BouJan2}. Moreover, a reflexive graph in a Mal'tsev category $\Cc$
\begin{equation}\label{refgraph}
    \begin{tikzcd}
	A_{1} && A_{0}
	\arrow[shift left=3, from=1-1, to=1-3, "\delta"]
	\arrow[shift right=3, from=1-1, to=1-3, "\gamma"']
	\arrow["i"{description}, from=1-3, to=1-1]
\end{tikzcd}
\end{equation}
has an internal structure of groupoid if and only if $\mathrm{Eq}(\delta)$ and $\mathrm{Eq}(\gamma)$, the kernel pairs of $\delta$ and $\gamma$, centralize to each other (in the sense of Smith) \cite{Ca}. All the previous observations lead us to the following result (see also \cite[Remark 5.3]{MG}):

\begin{proposition}
\label{RMGGrpd}
In a pointed, Mal'tsev category $\Cc$ which satisfies the 
condition (SH), the following statements are equivalent for the reflexive graph \eqref{refgraph}:
\begin{enumerate}
    \item it is a reflexive-multiplicative graph;
    \medskip
    \item it is an internal category;
    \medskip
    \item it is an internal groupoid;
    \medskip
    \item  $\mathrm{Eq}(\delta)$ and $\mathrm{Eq}(\gamma)$ centralize to each other (in the sense of Smith);
    \medskip
    \item it satisfies the equality: $[\mathrm{Ker}(\delta),\mathrm{Ker}(\gamma)]_{\mathrm{Huq}}=\mathbf{0}$. 
\end{enumerate}
\end{proposition}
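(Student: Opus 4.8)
The plan is to assemble the statement from the two results of Carboni et al. recalled above (\cite{Ca}) together with the condition (SH), essentially along the lines of \cite[Remark 5.3]{MG}. I would organise the equivalences as $(1)\Leftrightarrow(2)\Leftrightarrow(3)$, then $(3)\Leftrightarrow(4)$, then $(4)\Leftrightarrow(5)$. For $(1)\Leftrightarrow(2)\Leftrightarrow(3)$, the implications $(3)\Rightarrow(2)\Rightarrow(1)$ are immediate, since an internal groupoid is in particular an internal category and an internal category is in particular a reflexive-multiplicative graph (one just forgets the extra axioms imposed on $m$). For $(1)\Rightarrow(3)$ I would use that $\Cc$ is Mal'tsev: by \cite{Ca} the forgetful functor $F:\mathrm{Grpd}(\Cc)\to\mathrm{RMG}(\Cc)$ is then an isomorphism of categories, so any reflexive-multiplicative graph structure $m$ on \eqref{refgraph} automatically satisfies the internal category and internal groupoid axioms, and this groupoid structure is moreover the unique one. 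Hence (1), (2) and (3) all amount to the existence of a (necessarily unique) multiplication $m$ turning \eqref{refgraph} into a reflexive-multiplicative graph.

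Next, $(3)\Leftrightarrow(4)$ is exactly the second result of \cite{Ca} recalled above: a reflexive graph in a Mal'tsev category carries an internal groupoid structure if and only if the kernel pairs $\mathrm{Eq}(\delta)$ and $\mathrm{Eq}(\gamma)$ centralize one another in the sense of Smith, so this step costs nothing beyond the citation. Finally, for $(4)\Leftrightarrow(5)$ I would invoke the hypothesis that $\Cc$ satisfies (SH), namely that two equivalence relations on the same object centralize in the sense of Smith precisely when their normalizations commute in the sense of Huq. Applying this to $\mathrm{Eq}(\delta)$ and $\mathrm{Eq}(\gamma)$, one uses that in a pointed category the normalization of the kernel pair of a morphism $f$ is its kernel $\mathrm{ker}(f)$; hence the normalizations of $\mathrm{Eq}(\delta)$ and $\mathrm{Eq}(\gamma)$ are $\mathrm{ker}(\delta):\mathrm{Ker}(\delta)\to A_{1}$ and $\mathrm{ker}(\gamma):\mathrm{Ker}(\gamma)\to A_{1}$, so that centralization of $\mathrm{Eq}(\delta)$ and $\mathrm{Eq}(\gamma)$ is equivalent to $\mathrm{Ker}(\delta)$ and $\mathrm{Ker}(\gamma)$ commuting in the sense of Huq, i.e. to $[\mathrm{Ker}(\delta),\mathrm{Ker}(\gamma)]_{\mathrm{Huq}}=\mathbf{0}$. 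Chaining the three blocks of equivalences yields the proposition.

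The substantive content is entirely carried by the two cited results of \cite{Ca} and by the definition of (SH), so I do not expect a serious obstacle; the one place that asks for an explicit (though short and standard) verification is the identification, in the pointed setting, of the normalization of the kernel pair $\mathrm{Eq}(\delta)$ with the kernel $\mathrm{Ker}(\delta)$, and likewise for $\gamma$. The rest is bookkeeping: checking that the notions of ``reflexive-multiplicative graph'', ``internal category'' and ``internal groupoid'' in Proposition~\ref{RMGGrpd} are applied to the \emph{same} reflexive graph \eqref{refgraph}, which is exactly what the uniqueness clause in \cite{Ca} guarantees, and being careful that the pointedness of $\Cc$ is what allows statement (5) to be phrased in terms of kernels rather than general normal subobjects.
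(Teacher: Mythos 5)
Your proposal is correct and follows exactly the route the paper intends: the paper gives no separate proof, deriving the proposition directly from the preceding observations, namely the two results of \cite{Ca} (the isomorphism $\mathrm{Grpd}(\Cc)\cong\mathrm{RMG}(\Cc)$ and the characterization of groupoid structures via Smith-centralizing kernel pairs in a Mal'tsev category) together with condition (SH) and the identification of the normalizations of $\mathrm{Eq}(\delta)$, $\mathrm{Eq}(\gamma)$ with $\mathrm{Ker}(\delta)$, $\mathrm{Ker}(\gamma)$. Your chain of equivalences and the point you flag for verification match this assembly precisely.
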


\noindent\textbf{The Zassenhaus Lemma}. Recall from \cite{JaZ} that a category is \textit{normal} if it is pointed regular and every regular epimorphism is a normal epimorphism. A normal category is \textit{ideal determined} if it has binary coproducts and the normal image of a normal monomorphism is again a normal monomorphism \cite{JMT}. In particular, any semi-abelian category is normal and ideal determined (see \cite{Semi-ab}). In \cite[Proposition 5.2]{Wy}, O. Wyler stated the Zassenhaus Lemma for an ideal determined category $\Cc$, using the notion of \textit{asymmetric join}: given a kernel $k:K\to A$ in $\Cc$ with cokernel $f:A\to A/K$ and a monomorphism $m:M\to A$, the asymmetric join is the subobject  $k\vee_{A}m:K\vee_{A}M\to A$ of $A$ such that $(K\vee_{A}M,k\vee_{A}m,\pi_{2})$ is the pullback of the pair $(f,f\circ m)$. In \cite[Lemma 2.8]{OlSt} it is shown that, if $\Cc$ is semi-abelian, the asymmetric join $K\vee_{A}M$ coincides with the supremum of $K$ and $M$ (as subobjects of $A$), i.e., the smallest subobject of $A$ containing $K$ and $M$, which always exists in a category with binary coproducts and a factorization of every morphism as a strong epimorphism followed by a monomorphsim. Thus, from \cite[Theorem 3.3]{OlSt} and \cite[Corollary 3.5]{OlSt}, we recall the Zassenhaus Lemma for a semi-abelian category: 

\begin{theorem}\label{Zassenhaus}
Let $\Cc$ be a semi-abelian category, $K\to U$ and $L\to V$ two kernels in $\Cc$ and $U\to A$ and $V\to A$ two monomorphisms in $\Cc$. Then we have isomorphisms
\[
\frac{K\vee_{U}(U\cap V)}{K\vee_{U}(U\cap L)}\cong\frac{U\cap V}{(K\cap V)\vee_{U\cap V}(L\cap U)}\cong\frac{L\vee_{V}(U\cap V)}{L\vee_{V}(K\cap V)}.
\]
\end{theorem}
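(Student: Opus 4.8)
The plan is to collapse the three-term chain to a single isomorphism by symmetry, and then to obtain that isomorphism from the Noether isomorphism theorems available in any semi-abelian category (see \cite{BorBou}) together with one modular law for asymmetric joins along a normal subobject. Throughout I will use that, by \cite[Lemma 2.8]{OlSt}, $K\vee_{A}M$ is the supremum $K\vee M$ of $K$ and $M$ in $\mathrm{Sub}(A)$; concretely it is the inverse image along $\mathrm{coker}(k)$ of the regular image $\mathrm{coker}(k)(M)$, where $k$ denotes the kernel in question. For the symmetry: under the simultaneous relabelling $(K\hookrightarrow U\hookrightarrow A)\leftrightarrow(L\hookrightarrow V\hookrightarrow A)$, commutativity of $\cap$ and $\vee$ shows that the middle term is fixed while the two outer terms are interchanged; hence it suffices to prove
\[
\frac{K\vee_{U}(U\cap V)}{K\vee_{U}(U\cap L)}\;\cong\;\frac{U\cap V}{(K\cap V)\vee_{U\cap V}(L\cap U)}.
\]

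For the bookkeeping, set $W:=U\cap V$, $L':=U\cap L$ and $K':=V\cap K$; since $K\leq U$ and $L\leq V$ one has $K'=K\cap W$, $L'=L\cap W$ and $L'\leq W\leq U$, $K'\leq W$. Because $\Cc$ is regular, kernels are pullback-stable, so $K'$ (pull $K\to U$ back along $W\hookrightarrow U$) and $L'$ (pull $L\to V$ back along $W\hookrightarrow V$) are kernels in $W$. Because $\Cc$ is protomodular, restricting $K\to U$ to any subobject of $U$ containing $K$ yields again a kernel, so $K$ is a kernel in both $K\vee_{U}W$ and $K\vee_{U}L'$. And because $\Cc$ is ideal determined, regular images of normal subobjects are normal, which forces $K\vee_{U}L'$ to be normal in $K\vee_{U}W$ and $K'\vee_{W}L'$ to be normal in $W$. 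So every quotient appearing below is legitimate.

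The computation then runs as follows. Quotient numerator and denominator of the left-hand side by the common normal subobject $K$ (third isomorphism theorem), and identify $(K\vee_{U}W)/K$ with $W/K'$ via the diamond isomorphism $\theta$ induced by $W\hookrightarrow K\vee_{U}W\twoheadrightarrow(K\vee_{U}W)/K$, whose kernel is $W\cap K=K'$. A short computation with preimages shows that $\theta$ carries $(K\vee_{U}L')/K$ onto $\bigl((K\vee_{U}L')\cap W\bigr)/K'$, so the claim reduces to the modular identity
\[
(K\vee_{U}L')\cap W\;=\;(K\cap W)\vee_{W}L'\;=\;K'\vee_{W}L',
\]
which holds because $K$ is normal in $U$ and $L'\leq W$. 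Granting it, one more application of the third isomorphism theorem inside $W$ (quotienting by the common normal subobject $K'$) gives
\[
\frac{K\vee_{U}W}{K\vee_{U}L'}\;\cong\;\frac{(K\vee_{U}W)/K}{(K\vee_{U}L')/K}\;\cong\;\frac{W/K'}{(K'\vee_{W}L')/K'}\;\cong\;\frac{W}{K'\vee_{W}L'},
\]
which is the isomorphism isolated above; by the symmetry step this finishes the proof.

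The main obstacle is the modular identity $(K\vee_{U}L')\cap W=K'\vee_{W}L'$: the full subobject lattice of a semi-abelian category need not be modular, and this normal instance is precisely where protomodularity and exactness enter — it is in substance \cite[Theorem 3.3]{OlSt}, and everything else is formal. The surrounding work is pure bookkeeping: checking that every subobject required to be a kernel is one, and that every join required to be normal is so, which is routine once the ideal-determined structure of $\Cc$ is kept in mind.
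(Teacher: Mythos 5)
A preliminary remark: the paper does not actually prove Theorem \ref{Zassenhaus}; it is recalled verbatim from \cite[Theorem 3.3 and Corollary 3.5]{OlSt}, so the only argument in the paper is that citation, and there is no in-paper proof to compare yours against. Your outline --- reduce to a single isomorphism via the $(K,U)\leftrightarrow(L,V)$ symmetry (legitimate, since by \cite[Lemma 2.8]{OlSt} the joins in the middle term are genuine suprema, hence symmetric), then run the classical butterfly computation through the Noether isomorphism theorems available in any homological category \cite{BorBou} --- is the right strategy and is essentially the Wyler/Ngaha Ngaha--Sterck route.

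The proposal nevertheless has a genuine gap exactly where you locate ``the main obstacle''. You reduce the theorem to the identity $(K\vee_U L')\cap W=K'\vee_W L'$ (with $W=U\cap V$, $K'=K\cap V$, $L'=L\cap U$) and then dispose of it with ``it is in substance \cite[Theorem 3.3]{OlSt}''. But \cite[Theorem 3.3]{OlSt} \emph{is} the Zassenhaus Lemma, so at its only non-formal step your argument appeals to the statement being proved; and the identity is certainly not an instance of lattice modularity, which fails for subobject lattices in semi-abelian categories. The step is true and can be proved directly: with $f=\mathrm{coker}(K\to U)$ one has $K\vee_U L'=f^{-1}(\overline{f(L')})$, hence $(K\vee_U L')\cap W=(f|_W)^{-1}(\overline{f(L')})$; since $\Cc$ is homological, $f|_W$ factors as $g=\mathrm{coker}(K'\to W)$ followed by a monomorphism $\overline{f(W)}\to U/K$, and since $L'\le W$ the subobject $\overline{f(L')}$ lies in $\overline{f(W)}$ and corresponds there to $\overline{g(L')}$, whence $(K\vee_U L')\cap W=g^{-1}(\overline{g(L')})=K'\vee_W L'$. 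Relatedly, your justification that $K\vee_U L'$ is normal in $K\vee_U W$ (``ideal determined, so regular images of normal monomorphisms are normal'') is a non sequitur as stated: no image of a normal mono is identified. That normality is better obtained a posteriori: $K'\vee_W L'$ is normal in $W$ (a join of two normal subobjects), and $K\vee_U L'$ is the inverse image of the corresponding subobject of $(K\vee_U W)/K\cong W/K'$ along the quotient $K\vee_U W\twoheadrightarrow (K\vee_U W)/K$, hence a kernel by pullback stability. With these two points repaired, your reduction does assemble into a complete proof, essentially the one in \cite{OlSt}.
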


\noindent\textbf{Simplicial objects and Moore complexes}. First recall that, given a pointed category $\Cc$, a \textit{chain complex} $(X_{\bullet},\partial_{\bullet})$ in $\Cc$ is given by a collection of objects $(X_{n})_{n\in\mathbb{Z}}$ in $\Cc$ and a collection of morphisms $(\partial_{n}:X_{n}\to X_{n-1})_{n\in\mathbb{Z}}$ in $\Cc$, called the differentials, such that $\partial_{n}\circ\partial_{n+1}=0$, for all $n\in\mathbb{Z}$. A morphism in a pointed, regular and protomodular category $\Cc$ is called \textit{proper} if its image is a kernel \cite{bou}. A chain complex is proper whenever all its differentials are \cite{EV}. We denote the category of chain complexes in $\Cc$ by Ch($\Cc$). \medskip

The \textit{simplicial category} $\Delta$ has objects given by finite ordinals $[n]=\{0,\ldots,n\}$, with $n\in\mathbb{N}$, and morphisms $[n]\to[m]$ given by order preserving maps from $\{0,\ldots,n\}$ to $\{0,\ldots,m\}$. 
Given a category $\Cc$, the category Simp$(\Cc)$ of simplicial objects and simplicial morphisms of $\Cc$ is the functor category Fun$(\Delta^{\mathrm{op}},\Cc)$ \cite{May}. Thus, a \textit{simplicial object} $X:\Delta^{\mathrm{op}}\to\Cc$ in a category $\Cc$
is given by the following data: a collection of objects $(X_{n})_{n\in\mathbb{N}}$ in $\Cc$, morphisms in $\Cc$
\[
d^{n}_{i}:X_{n}\to X_{n-1},\ \text{for}\ 0\leq i\leq n\ \ \text{and}\ \ s^{n+1}_{j}:X_{n}\to X_{n+1},\ \text{for}\ 0\leq j\leq n
\]
called the \textit{face operators} and the \textit{degeneracy operators}, respectively, subject to the following \textit{simplicial identities}:
\begin{enumerate}
\item[1)] $d^{n-1}_{i}\circ d^{n}_{j}=d^{n-1}_{j-1}\circ d^{n}_{i}$ if $i<j$; \medskip
\item[2)] $s^{n+1}_{i}\circ s^{n}_{j}=s^{n+1}_{j+1}\circ s^{n}_{i}$ if $i\leq j$; \medskip
\item[3)] $d^{n}_{i}\circ s^{n}_{j}=s^{n-1}_{j-1}\circ d^{n-1}_{i}$ if $i<j$,\ $d^{n}_{j}\circ s^{n}_{j}=d^{n}_{j+1}\circ s^{n}_{j}=\mathrm{Id}$,\ $d^{n}_{i}\circ s^{n}_{j}=s^{n-1}_{j}\circ d^{n-1}_{i-1}$ if $i>j+1$
\end{enumerate}
for all $n\in\mathbb{N}$. 
\medskip

Given $n\in\mathbb{N}$, a $n$-\textit{truncated} simplicial object in a category $\Cc$ is a functor $X:\Delta_{n}^{\mathrm{op}}\to\Cc$, where $\Delta_{n}$ is the full subcategory of $\Delta$ whose objects are the naturals $\leq n$. We denote the category of $n$-truncated simplicial objects in $\Cc$ by Simp$_{n}(\Cc)$. For each $n\in\mathbb{N}$ there is a \textit{truncation} functor tr$_{n}:\mathrm{Simp}(\Cc)\to\mathrm{Simp}_{n}(\Cc)$ which simply forgets the objects $X_{i}$ of a simplicial object $X$ for dimensions higher than $n$. If $\Cc$ has finite limits then tr$_{n}$ admits a right adjoint cosk$_{n}$ called the $n$-\textit{coskeleton} functor, while if $\Cc$ has finite colimits then tr$_{n}$ admits a left adjoint sk$_{n}$, called the $n$-\textit{skeleton} functor, see \cite{Du}.

Let us quickly recall the construction of the $n$-coskeleton functor for a finitely complete category $\Cc$. Given a $n$-truncated simplicial object $X$, consider the face operators $d^{n}_{0},\ldots,d^{n}_{n}:X_{n}\to X_{n-1}$, then $X_{n+1}$ is defined as an object in $\Cc$ with morphisms $f_{0},\ldots,f_{n+1}:X_{n+1}\to X_{n}$ such that $d^{n}_{i}\circ f_{j}=d^{n}_{j-1}\circ f_{i}$ for all $i<j$ which is universal with this property: given morphisms $\pi_{0},\ldots,\pi_{n+1}:A\to X_{n}$ in $\Cc$ such that $d^{n}_{i}\circ\pi_{j}=d^{n}_{j-1}\circ\pi_{i}$ for all $i<j$ then there is a unique morphism $\alpha:A\to X_{n+1}$ such that $f_{i}\circ\alpha=\pi_{i}$:
\[
\begin{tikzcd}
	A & X_{n} \\
	X_{n+1} & X_{n} & X_{n-1} & {}
	\arrow[dotted, no head, from=2-3, to=2-4]
	\arrow[from=1-1, to=2-1,"\alpha"']
	\arrow[from=1-2, to=2-2, "\mathrm{Id}"]
	\arrow[shift left=3, from=1-1, to=1-2, "\pi_{n+1}"]
	\arrow[shift right=3, from=1-1, to=1-2,"\pi_{0}"]
	\arrow[shift left=3, from=2-1, to=2-2, "f_{n+1}"]
	\arrow[shift right=3, from=2-1, to=2-2, "f_{0}"]
	\arrow[shift left=3, from=2-2, to=2-3, "d^{n}_{n}"]
	\arrow[shift right=3, from=2-2, to=2-3, "d^{n}_{0}"]
\end{tikzcd}
\]
Furthermore, the universal property of $X_{n+1}$ allows to define degeneracy operators $s^{n+1}_{i}:X_{n}\to X_{n+1}$ for $0\leq i\leq n$.
This yields an $(n+1)$-truncated simplicial objects in $\Cc$ and, proceeding inductively, one defines the $n$-coskeleton functor. \medskip

Given a simplicial object $X$ in a pointed category $\Cc$ with pullbacks, 
the \textit{Moore chain complex} $(M(X)_{\bullet},\partial_{\bullet})$ is the chain complex defined by:
\begin{enumerate}
    \item $M(X)_{n}=\mathbf{0}$ for $n<0$ and $M(X)_{0}=X_{0}$; \medskip
    \item $M(X)_{n}=\bigcap_{i=0}^{n-1}{\mathrm{Ker}(d^{n}_{i})}$ for $n\geq1$; \medskip
    \item $\partial_{n}=d^{n}_{n}\circ\cap_{i=0}^{n-1}{\mathrm{ker}(d^{n}_{i})}:M(X)_{n}\to M(X)_{n-1}$ for $n\geq1$ (and the zero morphism for $n\leq0$),
\end{enumerate}
  see \cite[Definition 3.1]{EV}. Hence there is a functor $M:\mathrm{Simp}(\Cc)\to\mathrm{Ch}(\Cc)$ and, if $\Cc$ is pointed and protomodular, then $M$ is conservative, i.e., it reflects isomorphisms \cite{bou2}. In \cite[Theorem 3.6]{EV} it is shown that, given a pointed, exact and protomodular category $\Cc$ and a simplicial object $X$ in $\Cc$, then $(M(X)_{\bullet},\partial_{\bullet})$ is a proper chain complex of $\Cc$. Moreover, we recall the following result:

\begin{theorem}(cf. \cite[Theorem 5.3]{Caf})\label{thm:lengthMoore}
Let $\Cc$ be a pointed category with finite limits. For a simplicial object $X$ with corresponding Moore complex $M(X)$, then the Moore complex of $\mathrm{cosk}_{n}\mathrm{tr}_{n}(X)$ satisfies:
\begin{enumerate}
    \item $M(\mathrm{cosk}_{n}\mathrm{tr}_{n}(X))_{i}=M(X)_{i}$ for $i\leq n$; \medskip
    \item $M(\mathrm{cosk}_{n}\mathrm{tr}_{n}(X))_{n+1}=\mathrm{Ker}(\partial_{n}:M(X)_{n}\to M(X)_{n-1})$; \medskip
    \item  $M(\mathrm{cosk}_{n}\mathrm{tr}_{n}(X))_{i}=\mathbf{0}$ for $i>n+1$.
\end{enumerate}
\end{theorem}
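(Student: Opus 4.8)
The plan is to prove the three assertions in turn, writing $Y:=\mathrm{cosk}_{n}\mathrm{tr}_{n}(X)$ and using repeatedly that for $i\ge 1$ the Moore object $M(Z)_{i}=\bigcap_{j=0}^{i-1}\mathrm{Ker}(d^{i}_{j})$, together with its differential $\partial_{i}=d^{i}_{i}\circ\cap_{j=0}^{i-1}\mathrm{ker}(d^{i}_{j})$, depends only on $Z_{i}$, $Z_{i-1}$ and the face operators between them. Assertion (1) is then essentially formal: $\mathrm{cosk}_{n}$ is constructed so that $\mathrm{tr}_{n}(Y)=\mathrm{tr}_{n}(X)$ (equivalently, the counit of $\mathrm{tr}_{n}\dashv\mathrm{cosk}_{n}$ is an isomorphism, $\mathrm{cosk}_{n}$ being a right Kan extension along a fully faithful functor), so $Y_{i}=X_{i}$ with the same faces and degeneracies for $i\le n$; hence $M(Y)_{i}=M(X)_{i}$ for $i\le n$, and moreover the Moore differentials of $Y$ and $X$ agree in those degrees — a point I will reuse in (3).

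For (2) I would unwind the first step of the inductive construction of $\mathrm{cosk}_{n}$: $Y_{n+1}$ carries face operators $d^{n+1}_{0},\dots,d^{n+1}_{n+1}:Y_{n+1}\to Y_{n}=X_{n}$ satisfying $d^{n}_{i}\circ d^{n+1}_{j}=d^{n}_{j-1}\circ d^{n+1}_{i}$ for $i<j$, and is universal with this property. The underlying intuition is that a generalized element of $Y_{n+1}$ is a boundary-compatible tuple $(x_{0},\dots,x_{n+1})$ of elements of $X_{n}$, which lies in $M(Y)_{n+1}=\bigcap_{i=0}^{n}\mathrm{Ker}(d^{n+1}_{i})$ exactly when $x_{0}=\cdots=x_{n}=0$; the remaining compatibilities then force $d^{n}_{i}x_{n+1}=0$ for all $i\le n$, so $x_{n+1}$ ranges over $\bigcap_{i=0}^{n}\mathrm{Ker}(d^{n}_{i})=\mathrm{Ker}(\partial_{n})$. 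To make this precise, write $\iota:\bigcap_{i=0}^{n}\mathrm{Ker}(d^{n}_{i})\hookrightarrow X_{n}$ for the inclusion and feed the cone $(0,\dots,0,\iota)$ into the universal property of $Y_{n+1}$ (admissible since $d^{n}_{i}\iota=0$ for every $i\le n$), obtaining a map $\mathrm{Ker}(\partial_{n})\to Y_{n+1}$ that factors through $M(Y)_{n+1}$; conversely, the relation $d^{n}_{i}\circ d^{n+1}_{n+1}=d^{n}_{n}\circ d^{n+1}_{i}$ shows $d^{n+1}_{n+1}$ restricted to $M(Y)_{n+1}$ kills all the $d^{n}_{i}$, hence factors through $\bigcap_{i=0}^{n}\mathrm{Ker}(d^{n}_{i})$. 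Comparing both composites against the defining cone of $Y_{n+1}$ shows these two maps are mutually inverse, giving (2). I would also record that under this isomorphism the Moore differential $\partial_{n+1}:M(Y)_{n+1}\to M(Y)_{n}=M(X)_{n}$ is the composite $M(Y)_{n+1}\xrightarrow{\ \cong\ }\mathrm{Ker}(\partial_{n})\hookrightarrow M(X)_{n}$, hence a monomorphism.

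For (3) I would run the argument of (2) one dimension at a time. By the inductive definition of $\mathrm{cosk}_{n}$, for every $m\ge n$ the object $Y_{m+1}$ is built from $\mathrm{tr}_{m}(Y)$ by exactly the same universal construction, so the same reasoning yields $M(Y)_{m+1}\cong\mathrm{Ker}\!\big(\partial_{m}:M(Y)_{m}\to M(Y)_{m-1}\big)$ for all $m\ge n$. Taking $m=n+1$ and using that $\partial_{n+1}$ is monic (so its kernel is $\mathbf{0}$) gives $M(Y)_{n+2}=\mathbf{0}$; then for $m\ge n+2$ the same isomorphism, applied inductively, turns $M(Y)_{m}=\mathbf{0}$ into $M(Y)_{m+1}\cong\mathrm{Ker}(\partial_{m})=\mathbf{0}$, which finishes (3).

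The main obstacle I anticipate lies in (2): not merely identifying the object $M(Y)_{n+1}$, but producing the isomorphism concretely enough to see that the induced differential $\partial_{n+1}$ is a monomorphism, since it is precisely that monomorphism which collapses the tower of higher Moore objects in (3). The secondary care-point is the book-keeping of the defining cones of $Y_{n+1}$ (and of each $Y_{m+1}$) so that the two candidate inverse maps are verified through the universal property rather than through generalized elements.
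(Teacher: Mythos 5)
Your argument is correct. Note, however, that the paper does not prove this statement at all: it is recalled verbatim from the cited reference (L\'opez Cafaggi, Theorem 5.3), so there is no in-paper proof to compare against. Your sketch supplies a valid self-contained argument along the standard lines: (1) follows because $\mathrm{tr}_{n}\mathrm{cosk}_{n}\mathrm{tr}_{n}(X)=\mathrm{tr}_{n}(X)$ and $M(-)_{i}$ only depends on the $i$-th and $(i-1)$-st levels with their faces; (2) is exactly the identification of $\bigcap_{i=0}^{n}\mathrm{Ker}(d^{n+1}_{i})$ inside the simplicial kernel $Y_{n+1}$ with $\bigcap_{i=0}^{n}\mathrm{Ker}(d^{n}_{i})=\mathrm{Ker}(\partial_{n})$ via the last projection $f_{n+1}$, with the two comparison maps checked against the jointly monic limit cone; and (3) uses that, by the inductive construction of $\mathrm{cosk}_{n}$ recalled in the paper's preliminaries, $Y=\mathrm{cosk}_{n}\mathrm{tr}_{n}(X)$ is its own $m$-coskeleton for every $m\geq n$, so the degree-shifted version of (2) applies at each level and the monomorphy of $\partial_{n+1}$ (it becomes the kernel inclusion $\mathrm{Ker}(\partial_{n})\hookrightarrow M(X)_{n}$ under your isomorphism) collapses all higher Moore objects. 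Two cosmetic points: the equality in (2) should be read as an identification of subobjects of $X_{n}$ (your canonical isomorphism is exactly that), and in (3) it is worth stating explicitly the adjunction fact $\mathrm{cosk}_{n}=\mathrm{cosk}_{m}\circ\mathrm{cosk}^{m}_{n}$ (right adjoints compose) which justifies the claim that each $Y_{m+1}$ is the simplicial kernel of $\mathrm{tr}_{m}(Y)$; with the construction as described in the paper this is immediate, so neither point is a genuine gap.
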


As a consequence, given a $n$-truncated simplicial object $X:\Delta^{\mathrm{op}}_{n}\to\Cc$ in a pointed category with finite limits $\Cc$ and the simplicial object $\mathrm{cosk}_{n}(X)$, since $\mathrm{tr}_{n}\mathrm{cosk}_{n}(X)=X$ and then $\mathrm{cosk}_{n}\mathrm{tr}_{n}(\mathrm{cosk}_{n}(X))=\mathrm{cosk}_{n}(X)$, by applying the previous result one obtains:

\begin{corollary}\label{cor:simpobj}
    Let $\Cc$ be a pointed category with finite limits and $X$ an object in $\mathrm{Simp}_{n}(\Cc)$. Then 
 \[   
    M(\mathrm{cosk}_{n}(X))_{i}=\mathbf{0}\ \text{if}\ i>n+1,\ M(\mathrm{cosk}_{n}(X))_{n+1}=\mathrm{Ker}(\partial_{n}:M(\mathrm{cosk}_{n}(X))_{n}\to M(\mathrm{cosk}_{n}(X))_{n-1}).
\]
\end{corollary}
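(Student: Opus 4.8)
The plan is to obtain Corollary~\ref{cor:simpobj} as an immediate consequence of Theorem~\ref{thm:lengthMoore}, applied to the (non\nobreakdash-truncated) simplicial object $Y:=\mathrm{cosk}_{n}(X)$. The only thing that needs to be checked is that $Y$ is fixed by the endofunctor $\mathrm{cosk}_{n}\circ\mathrm{tr}_{n}$, i.e.\ that $\mathrm{cosk}_{n}\mathrm{tr}_{n}(Y)=Y$; once this is established, the statement follows by substituting this identity into the conclusion of Theorem~\ref{thm:lengthMoore}, with no further computation.

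First I would recall that, since $\mathrm{tr}_{n}$ admits $\mathrm{cosk}_{n}$ as a right adjoint, the composite $\mathrm{tr}_{n}\circ\mathrm{cosk}_{n}$ is (naturally isomorphic to) the identity functor on $\mathrm{Simp}_{n}(\Cc)$: the coskeleton construction recalled above produces $X_{n+1},X_{n+2},\dots$, together with their face and degeneracy operators, purely by iterated universal properties, without modifying the data $X_{0},\dots,X_{n}$ or the operators among them, so truncating back at level $n$ returns $X$ exactly. Hence $\mathrm{tr}_{n}(Y)=\mathrm{tr}_{n}\mathrm{cosk}_{n}(X)=X$, and applying $\mathrm{cosk}_{n}$ gives $\mathrm{cosk}_{n}\mathrm{tr}_{n}(Y)=\mathrm{cosk}_{n}(X)=Y$, as wanted. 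This is precisely the reduction already indicated in the paragraph preceding the Corollary.

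Next I would invoke Theorem~\ref{thm:lengthMoore} with its simplicial object taken to be $Y$: item~(3) gives $M(\mathrm{cosk}_{n}\mathrm{tr}_{n}(Y))_{i}=\mathbf{0}$ for $i>n+1$, and item~(2) gives $M(\mathrm{cosk}_{n}\mathrm{tr}_{n}(Y))_{n+1}=\mathrm{Ker}(\partial_{n}\colon M(Y)_{n}\to M(Y)_{n-1})$. Replacing $\mathrm{cosk}_{n}\mathrm{tr}_{n}(Y)$ by $Y=\mathrm{cosk}_{n}(X)$ turns these into exactly the two assertions of the Corollary, while item~(1) becomes the tautology $M(\mathrm{cosk}_{n}(X))_{i}=M(Y)_{i}$ for $i\le n$ and may be omitted. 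The only mild obstacle here is keeping track of the natural isomorphism $\mathrm{tr}_{n}\circ\mathrm{cosk}_{n}\cong\mathrm{Id}$ and observing that the Moore functor $M$ respects it, so that the equalities of Theorem~\ref{thm:lengthMoore} transport along it; beyond this bookkeeping no argument with the simplicial identities or with the universal property of the coskeleton is required.
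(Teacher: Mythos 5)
Your proposal is correct and follows exactly the paper's route: the paper also deduces the Corollary from Theorem \ref{thm:lengthMoore} by noting that $\mathrm{tr}_{n}\mathrm{cosk}_{n}(X)=X$, hence $\mathrm{cosk}_{n}\mathrm{tr}_{n}(\mathrm{cosk}_{n}(X))=\mathrm{cosk}_{n}(X)$, and then substituting. Your extra remarks on the adjunction $\mathrm{tr}_{n}\dashv\mathrm{cosk}_{n}$ and on transporting the equalities along this identification are just a more explicit account of the same one-line argument.
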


\noindent\textbf{The category $\Hc$}. We denote by $\Hc$ the category of cocommutative color Hopf algebras, i.e., cocommutative Hopf monoids in the category of $G$-vector spaces Vec$_{G}$, where $G$ is an abelian group. Given $X=\bigoplus_{g\in G}{X_{g}}$ and $Y=\bigoplus_{g\in G}{Y_{g}}$ in Vec$_{G}$, the category Vec$_{G}$ is braided with braiding $c_{X,Y}:X\otimes Y\to Y\otimes X$ defined by $c_{X,Y}(x\otimes y)=\phi(g,h)y\otimes x$ for $x\in X_{g}$, $y\in Y_{h}$ and $g,h\in G$ on the components of the grading and extended by linearity, where $\phi:G\times G\to\Bbbk-\{0\}$ is a bicharacter on $G$, i.e., it satisfies
\[
\phi(gh,l)=\phi(g,l)\phi(h,l)\ \text{and}\ \phi(g,hl)=\phi(g,h)\phi(g,l)\ \text{for every}\ g,h,l\in G.
\]
Moreover, Vec$_{G}$ is symmetric if $\phi$ satisfies further $\phi(g,h)\phi(h,g)=1_{\Bbbk}$ for all $g,h\in G$. The objects of the category $\mathrm{Hopf}_{\mathrm{coc}}(\mathrm{Vec}_{G})$ are $G$-graded algebras $(H,m,u)$ which are also cocommutative $G$-graded coalgebras $(H,\Delta,\epsilon)$ with a compatibility between the two structures, equipped with an antipode $S$. The morphisms in $\Hc$ are simply algebra maps which are also coalgebra maps preserving gradings. We will use the same notations employ in \cite{As}. In particular, the compatibility condition reads as $\Delta(ab)=\phi(|a_{2}|,|b_{1}|)a_{1}b_{1}\otimes a_{2}b_{2}$ for all $a,b\in H$, while the cocommutativity of $H$ reads as $a_{1}\otimes a_{2}=\phi(|a_{1}|,|a_{2}|)a_{2}\otimes a_{1}$ for all $a\in H$. Recall also that the cocommutativity implies $\Delta(S(a))=S(a_{1})\otimes S(a_{2})$ for all $a\in H$, while $S(ab)=\phi(|a|,|b|)S(b)S(a)$ for all $a,b\in H$. We refer the reader to \cite{Ag} for many details about monoidal categories and to \cite{Sw} for basic results in Hopf algebra theory. \medskip

In \cite[Theorem 6.1]{As} we showed that the category $\Hc$ is semi-abelian if the abelian group $G$ is finitely generated and char$\Bbbk\not=2$ (not needed in case $G$ is finite of odd cardinality). Note that, in case $G=\{1\}$ is the trivial group, one recovers \cite[Theorem 2.10]{MG} for the category Hopf$_{\Bbbk,\mathrm{coc}}$ of cocommutative Hopf algebras.
From now on we suppose that $G$ is a finitely generated abelian group and char$\Bbbk\not=2$ and we study the categorical properties discussed above for $\Hc$.

\section{Huq commutators in $\Hc$}

In this section we give an explicit description of commutators in $\Hc$. 

\begin{remark}\label{p}
In $\Hc$ monomorphisms are exactly the injective morphisms as it is shown in \cite[Lemma 5.22]{As} and, since every injective map $f$ in $\Hc$ can be decomposed as an isomorphism followed by an inclusion, a subobject of an object $A$ in $\Hc$ is an inclusion $i:X\to A$ in $\Hc$, i.e., $X$ is a color Hopf subalgebra of $A$. Given $X$ and $Y$ color Hopf subalgebras of $A$,
recall that $X\times Y=X\otimes Y$ is the binary product of $X$ and $Y$ in $\Hc$ and $\langle{\mathrm{Id}_{X},0_{X,Y}}\rangle=(\mathrm{Id}_{X}\otimes u_{Y}\epsilon_{X})\circ\Delta_{X}:X\to X\otimes Y$, $x\mapsto x\otimes1_{Y}$, $\langle{0_{Y,X},\mathrm{Id}_{Y}}\rangle=(u_{X}\epsilon_{Y}\otimes\mathrm{Id}_{Y})\circ\Delta_{Y}:Y\to X\otimes Y$, $y\mapsto1_{X}\otimes y$. 
\end{remark}

Given $i:X\to A$ and $j:Y\to A$ two color Hopf subalgebras of $A$, these commute in the sense of Huq if there exists a morphism $p:X\otimes Y\to A$ in $\Hc$ such that $p(x\otimes1_{Y})=x$ and $p(1_{X}\otimes y)=y$ for every $x\in X$ and $y\in Y$. But then $p$ is uniquely determined, if it exists, since it has to be a morphism of algebras and so
\[
p(x\otimes y)=p((x\otimes1_{Y})(1_{X}\otimes y))=p(x\otimes1_{Y})p(1_{X}\otimes y)=xy\ \text{for all}\ x\in X,\ y\in Y.
\]
Note that the uniqueness of $p$, in case of existence, was already known since $\Hc$ is protomodular.
Hence we must have $p=m_{A}\circ(i\otimes j)$ which is a morphism of coalgebras since this is true for $m_{A}$ with $A$ color Hopf algebra. Thus we only need that $p$ is a morphism of algebras.

\begin{lemma}\label{l1}
Given two color Hopf subalgebras $i:X\to A$ and $j:Y\to A$ of $A$ in $\Hc$, the following conditions are equivalent:
\begin{enumerate}
\item[1)] there exists a unique morphism of color Hopf algebras $p:X\otimes Y\to A$ such that $i$ and $j$ commute in the sense of Huq; \medskip
\item[2)] $xy=\phi(|x|,|y|)yx$, for all $x\in X$ and $y\in Y$;\medskip
\item[3)] $\phi(|x_{2}|,|y_{1}|)x_{1}y_{1}S(x_{2})S(y_{2})=\epsilon(x)\epsilon(y)1_{A}$, for all $x\in X$ and $y\in Y$;\medskip
\item[4)] $\phi(|x_{2}|,|y|)x_{1}yS(x_{2})=\epsilon(x)y$, for all $x\in X$ and $y\in Y$.
\end{enumerate}
\end{lemma}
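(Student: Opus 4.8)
The discussion preceding the statement has already reduced assertion (1) to a single point: that the morphism of coalgebras $p:=m_{A}\circ(i\otimes j)$ (which moreover preserves units) is a morphism of algebras. So the plan is to prove $(1)\Leftrightarrow(2)$ and then run the cycle $(2)\Rightarrow(3)\Rightarrow(4)\Rightarrow(2)$ by Sweedler calculus inside $A$.

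For $(1)\Leftrightarrow(2)$, recall from Remark~\ref{p} that $X\otimes Y$ is the binary product in $\Hc$, whose multiplication is the braided one, $(x\otimes y)(x'\otimes y')=\phi(|y|,|x'|)\,xx'\otimes yy'$ on homogeneous elements. Writing out $p\big((x\otimes y)(x'\otimes y')\big)=\phi(|y|,|x'|)\,xx'yy'$ and $p(x\otimes y)\,p(x'\otimes y')=xyx'y'$, one sees that $p$ is multiplicative iff $\phi(|y|,|x'|)\,xx'yy'=xyx'y'$ for all homogeneous $x,x'\in X$ and $y,y'\in Y$; specialising $x=1_{X}$, $y'=1_{Y}$ this reduces to the elementwise relation $yx'=\phi(|y|,|x'|)\,x'y$, and conversely this elementwise relation restores the general identity. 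Finally, the elementwise relation is equivalent to $(2)$: if it holds, applying it twice forces $\phi(|x|,|y|)\phi(|y|,|x|)=1_{\Bbbk}$ on every pair of degrees for which the corresponding products are non-zero, so the two braided commutation relations coincide (and both are vacuous on the remaining degrees).

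For $(2)\Rightarrow(3)$ I would use $(2)$, in the form $y_{1}S(x_{2})=\phi(|x_{2}|,|y_{1}|)^{-1}S(x_{2})y_{1}$ (valid because $S$ preserves the grading, $S(X)\subseteq X$ and $\Delta(Y)\subseteq Y\otimes Y$), inside the left-hand side of $(3)$; the two bicharacter factors cancel, leaving $\big(\sum x_{1}S(x_{2})\big)\big(\sum y_{1}S(y_{2})\big)=\epsilon(x)\epsilon(y)1_{A}$ by the antipode identity $\sum a_{1}S(a_{2})=\epsilon(a)1_{A}$. For $(3)\Rightarrow(4)$ I would apply $(3)$ with $y$ replaced by its first Sweedler component and multiply the resulting identity on the right by the second one; since $\sum S(y_{2})y_{3}$ collapses to an $\epsilon$-scalar, the left-hand side becomes $\phi(|x_{2}|,|y|)\,x_{1}yS(x_{2})$ — the surviving factor $\phi(|x_{2}|,|y_{1}|)$ turning into $\phi(|x_{2}|,|y|)$ precisely because the collapse forces $|y_{1}|=|y|$ on the surviving terms — while the right-hand side becomes $\epsilon(x)y$. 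For $(4)\Rightarrow(2)$ I would symmetrically apply $(4)$ with $x$ replaced by its second Sweedler component and multiply on the left by $S$ of the first one; here $\sum S(x_{1})x_{2}$ does the collapsing while the factor $\phi(|x_{3}|,|y|)$ is untouched and then becomes $\phi(|x|,|y|)$ once $\sum\epsilon(x_{(1)})x_{(2)}=x$ is applied. This yields $S(x)y=\phi(|x|,|y|)\,yS(x)$ for all homogeneous $x\in X$, $y\in Y$; since $S|_{X}$ is a degree-preserving bijection (indeed $S^{2}=\mathrm{Id}$ by cocommutativity), replacing $x$ by $S(x)$ recovers exactly $(2)$.

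The only genuinely delicate point is the bookkeeping of the bicharacter $\phi$ in the Sweedler calculus. In every step that collapses an antipode applied to one leg of an iterated coproduct, one must make sure that the leftover $\phi$-factor carries, as its free grading argument, the tensor leg that is \emph{not} being collapsed; this is why $(3)\Rightarrow(4)$ is carried out by multiplying on the right by the trailing component of $\Delta(y)$ whereas $(4)\Rightarrow(2)$ multiplies on the left by $S$ of the leading component of $\Delta(x)$, and not the other way round. Beyond that, the only facts used are that $\epsilon$ vanishes on homogeneous elements of non-trivial degree (so $\phi$-factors attached to $\epsilon$-killed legs are trivial), that $S$ preserves the grading and satisfies $S^{2}=\mathrm{Id}$, and the standard Hopf identities $\sum a_{1}S(a_{2})=\sum S(a_{1})a_{2}=\epsilon(a)1_{A}$.
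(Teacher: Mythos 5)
Your argument is correct, and it shares the paper's starting point: assertion 1) is reduced to the multiplicativity of $p=m_{A}\circ(i\otimes j)$, and the remaining equivalences are checked by Sweedler calculus; your steps 2)$\Rightarrow$3) and 3)$\Rightarrow$4) are, up to the direction in which they are written, exactly the computations in the paper. Where you genuinely diverge is in closing the cycle: the paper proves 3)$\Rightarrow$2) directly (inserting $S(y_{2})y_{3}$ and collapsing) and 4)$\Rightarrow$3) separately, whereas you prove 4)$\Rightarrow$2) by applying 4) to $x_{2}$, multiplying on the left by $S(x_{1})$ to get $S(x)y=\phi(|x|,|y|)\,yS(x)$, and then invoking $S^{2}=\mathrm{Id}$ (valid for cocommutative Hopf monoids in the symmetric category $\mathrm{Vec}_{G}$) to replace $x$ by $S(x)$. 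This replaces two of the paper's computations by one shorter one, at the price of using involutivity of the antipode, which the paper never needs. One small caveat in your 1)$\Leftrightarrow$2): the elementwise relation obtained by specialising $x=1_{X}$, $y'=1_{Y}$ is $yx'=\phi(|y|,|x'|)\,x'y$, and your claim that ``applying it twice'' forces $\phi(|x|,|y|)\phi(|y|,|x|)=1_{\Bbbk}$ on the relevant degrees does not follow from that single relation alone (to iterate you would already need the relation with the roles of $X$ and $Y$ exchanged, i.e.\ condition 2) itself). The point is moot, however, because $\phi$ is skew-symmetric by the standing assumption that $\mathrm{Vec}_{G}$ is symmetric; indeed the paper's own proof of 1)$\Rightarrow$2) uses $\phi(|x|,|y|)\phi(|y|,|x|)=1_{\Bbbk}$ at precisely this spot, so the two commutation relations coincide on the nose and your reduction stands.
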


\begin{proof}
If 1) holds true, then $p$ is a morphism of algebras and so we obtain
\[
xy=p(\phi(|x|,|y|)\phi(|y|,|x|)x\otimes y)=\phi(|x|,|y|)p((1_{X}\otimes y)(x\otimes1_{Y}))=\phi(|x|,|y|)p(1_{X}\otimes y)p(x\otimes1_{Y})=\phi(|x|,|y|)yx
\]
for all $x\in X$ and $y\in Y$ and then 2) is satisfied, while if 2) holds true then 
\[
pm_{X\otimes Y}(x\otimes y\otimes x'\otimes y')=\phi(|y|,|x'|)xx'yy'\overset{2)}{=}xyx'y'=m_{A}(p\otimes p)(x\otimes y\otimes x'\otimes y')
\]
for every $x,x'\in X$ and $y,y'\in Y$ and then 1) is satisfied, 
since $p$ is a morphism of algebras. Thus 1) and 2) are equivalent conditions and now we show the equivalence between 2) and 3). If 2) is satisfied then we immediately obtain 3) as 
\[
\phi(|x_{2}|,|y_{1}|)x_{1}y_{1}S(x_{2})S(y_{2})\overset{2)}{=}x_{1}S(x_{2})y_{1}S(y_{2})=\epsilon(x)\epsilon(y)1_{A},
\]
while if 3) holds true then
\[
\begin{split}
xy&=x_{1}\epsilon(x_{2})y=\phi(|x_{2}|,|y|)x_{1}y\epsilon(x_{2})=\phi(|x_{2}\otimes x_{3}|,|y|)x_{1}y_{1}\epsilon(y_{2})S(x_{2})x_{3}\\&=\phi(|x_{2}|,|y_{1}|)\phi(|x_{3}|,|y|)x_{1}y_{1}S(x_{2})\epsilon(y_{2})x_{3}=\phi(|x_{2}|,|y_{1}|)\phi(|x_{3}|,|y|)x_{1}y_{1}S(x_{2})S(y_{2})y_{3}x_{3}\\&\overset{3)}{=}\phi(|x_{2}|,|y|)\epsilon(x_{1})\epsilon(y_{1})y_{2}x_{2}=\phi(|x_{2}|,|y|)\epsilon(x_{1})yx_{2}=\phi(|x_{1}|,|y|)\phi(|x_{2}|,|y|)y\epsilon(x_{1})x_{2}\\&=\phi(|x|,|y|)yx
\end{split}
\]
so 2) is satisfied. Finally, we show that 3) and 4) are equivalent. Clearly, if 4) holds true then
\[
\phi(|x_{2}|,|y_{1}|)x_{1}y_{1}S(x_{2})S(y_{2})\overset{4)}{=}\epsilon(x)y_{1}S(y_{2})=\epsilon(x)\epsilon(y)1_{A},
\]
i.e., 3) is satisfied, while, by assuming 3), we can compute
\[
\begin{split}
\phi(|x_{2}|,|y|)x_{1}yS(x_{2})&=\phi(|x_{2}|,|y_{1}\otimes y_{2}|)x_{1}y_{1}\epsilon(y_{2})S(x_{2})=\phi(|x_{2}|,|y_{1}|)x_{1}y_{1}S(x_{2})\epsilon(y_{2})\\&=\phi(|x_{2}|,|y_{1}|)x_{1}y_{1}S(x_{2})S(y_{2})y_{3}\overset{3)}{=}\epsilon(x)\epsilon(y_{1})y_{2}=\epsilon(x)y
\end{split}
\]
and then 4) holds true.
\end{proof}

Since the category $\Hc$ is semi-abelian, a normal subobject $i:X\to A$ of $A$ in $\Hc$ is an inclusion which is a kernel of a morphism in $\Hc$ and this is equivalent, by \cite[Corollary 5.21]{As}, to $X$ being a \textit{normal} color Hopf subalgebra of $A$, i.e., such that $\xi_{A}(a\otimes x):=\phi(|a_{2}|,|x|)a_{1}xS(a_{2})\in X$ for every $a\in A$ and $x\in X$. Recall from \cite[Lemma 5.4 1) and 2)]{As} that $\xi_{A}:A\otimes A\to A$ is a morphism of graded coalgebras and, given $f:A\to B$ in $\Hc$, then $\xi_{B}\circ(f\otimes f)=f\circ\xi_{A}$. First we can show the following result:

\begin{lemma}\label{AinAVecG}
   Given $A$ in $\Hc$, then 
   \begin{equation}\label{actionxi}
    \xi_{A}(a\otimes\xi_{A}(b\otimes c))=\xi_{A}(ab\otimes c)\ \text{and}\ \xi_{A}(1_{A}\otimes a)=a\ \text{for all}\ a,b,c\in A, 
\end{equation}
i.e., $(A,\xi_{A})$ is in $_{A}\mathrm{Vec}_{G}$.
\end{lemma}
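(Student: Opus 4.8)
The plan is to verify the two displayed identities in \eqref{actionxi} directly from the defining formula $\xi_{A}(a\otimes x)=\phi(|a_{2}|,|x|)\,a_{1}xS(a_{2})$, reducing at once to homogeneous $a,b,c$ since $\xi_{A}$ is extended by linearity. The unit condition $\xi_{A}(1_{A}\otimes a)=a$ is immediate: $\Delta(1_{A})=1_{A}\otimes1_{A}$, the element $1_{A}$ has trivial degree so that $\phi(|1_{A}|,|a|)=1_{\Bbbk}$ (any bicharacter is trivial as soon as one of its arguments is the neutral element of $G$), and $S(1_{A})=1_{A}$; hence $\xi_{A}(1_{A}\otimes a)=1_{A}\,a\,S(1_{A})=a$.

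For the associativity condition I would expand both sides and then match scalar coefficients. On the left, $\xi_{A}(b\otimes c)=\phi(|b_{2}|,|c|)\,b_{1}cS(b_{2})$; since $\xi_{A}$ is a morphism of graded coalgebras (recalled above from \cite[Lemma 5.4]{As}) the element $\xi_{A}(b\otimes c)$ is homogeneous of degree $|b|+|c|$, so applying $\xi_{A}(a\otimes-)$ yields
\[
\xi_{A}\big(a\otimes\xi_{A}(b\otimes c)\big)=\phi(|a_{2}|,|b|+|c|)\,\phi(|b_{2}|,|c|)\,a_{1}b_{1}c\,S(b_{2})S(a_{2}).
\]
On the right, using the compatibility $\Delta(ab)=\phi(|a_{2}|,|b_{1}|)\,a_{1}b_{1}\otimes a_{2}b_{2}$, the identity $|a_{2}b_{2}|=|a_{2}|+|b_{2}|$, and $S(a_{2}b_{2})=\phi(|a_{2}|,|b_{2}|)\,S(b_{2})S(a_{2})$, one gets
\[
\xi_{A}(ab\otimes c)=\phi(|a_{2}|+|b_{2}|,|c|)\,\phi(|a_{2}|,|b_{1}|)\,\phi(|a_{2}|,|b_{2}|)\,a_{1}b_{1}c\,S(b_{2})S(a_{2}).
\]
The underlying elements coincide, so it only remains to compare the two scalar prefactors: splitting $\phi(|a_{2}|,|b|+|c|)=\phi(|a_{2}|,|b_{1}|)\phi(|a_{2}|,|b_{2}|)\phi(|a_{2}|,|c|)$ via $|b_{1}|+|b_{2}|=|b|$ and $\phi(g,hl)=\phi(g,h)\phi(g,l)$, and $\phi(|a_{2}|+|b_{2}|,|c|)=\phi(|a_{2}|,|c|)\phi(|b_{2}|,|c|)$ via $\phi(gh,l)=\phi(g,l)\phi(h,l)$, both prefactors become $\phi(|a_{2}|,|b_{1}|)\phi(|a_{2}|,|b_{2}|)\phi(|a_{2}|,|c|)\phi(|b_{2}|,|c|)$. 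This gives the first identity in \eqref{actionxi}, and together with the unit condition and the fact that $\xi_{A}$ is graded it shows that $(A,\xi_{A})$ is a left $A$-module in $\mathrm{Vec}_{G}$, i.e.\ an object of ${}_{A}\mathrm{Vec}_{G}$.

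The argument involves no genuine obstacle; the one point demanding care is the bookkeeping of the grading inside Sweedler notation — one must use that in $\Delta(b)=b_{1}\otimes b_{2}$ the relation $|b_{1}|+|b_{2}|=|b|$ holds term by term (and that $S$ preserves degree) in order to split $\phi(|a_{2}|,|b|+|c|)$ legitimately, and in order to know that $\xi_{A}(b\otimes c)$ is homogeneous so that the outer application of $\xi_{A}$ makes sense. In particular, cocommutativity of $A$ plays no role in either identity.
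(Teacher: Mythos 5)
Your proof is correct and takes essentially the same route as the paper's: a direct expansion using the defining formula for $\xi_{A}$, the graded bialgebra compatibility $\Delta(ab)=\phi(|a_{2}|,|b_{1}|)a_{1}b_{1}\otimes a_{2}b_{2}$, the antimultiplicativity $S(ab)=\phi(|a|,|b|)S(b)S(a)$, and the bicharacter identities — the only cosmetic differences being that the paper transforms the left-hand side into the right-hand side in a single chain (and writes $G$ multiplicatively) while you expand both sides and match scalar prefactors. Your bookkeeping remarks (term-by-term validity of $|b_{1}|+|b_{2}|=|b|$, homogeneity of $\xi_{A}(b\otimes c)$, and the fact that cocommutativity is not actually used here) are accurate.
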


\begin{proof}
Clearly $\xi_{A}$ preserves the gradings. Given $a,b,c\in A$, we can compute
\[
\begin{split}
    \xi_{A}(a\otimes\xi_{A}(b\otimes c))&=\phi(|b_{2}|,|c|)\xi_{A}(a\otimes b_{1}cS(b_{2}))=\phi(|b_{2}|,|c|)\phi(|a_{2}|,|b_{1}cS(b_{2})|)a_{1}b_{1}cS(b_{2})S(a_{2})\\&=\phi(|b_{2}|,|c|)\phi(|a_{2}|,|b_{1}c|)a_{1}b_{1}cS(a_{2}b_{2})=\phi(|a_{2}b_{2}|,|c|)\phi(|a_{2}|,|b_{1}|)a_{1}b_{1}cS(a_{2}b_{2})\\&=\phi(|(ab)_{2}|,|c|)(ab)_{1}cS((ab)_{2})=\xi_{A}(ab\otimes c)
\end{split}
\]
and $\xi_{A}(1_{A}\otimes a)=\phi(1_{G},|a|)1_{A}aS(1_{A})=a$ and so \eqref{actionxi} is satisfied.
\end{proof}

\begin{remark}
Observe that \eqref{actionxi} can also be deduced from \cite[Proposition 3.7.1]{Hec} seeing $A$ in $_{A}(\mathrm{Vec}_{G})_{A}$ with $A$-actions given by the multiplication $m_{A}$, in which case $\xi_{A}$ becomes $\mathrm{ad}_{A}$. Also note that 4) of Lemma \ref{l1} tells us that, when $i:X\to A$ and $j:Y\to A$ commute in $\Hc$, the restriction of the action $\xi_{A}$ to $X$ and $Y$ is trivial. From now on we set $\xi_{A}(a\otimes b):=a\triangleright b$, for all $a,b\in A$. In case there will be different $A$ and $B$ in $\Hc$ concurrently, the action $\triangleright$ will be clear from the context and we will omit to put down indexes.
\end{remark}

The Huq commutator of two normal color Hopf subalgebras $X$ and $Y$ of $A$ in $\Hc$, denoted by $[X,Y]_{\mathrm{Huq}}$, is then defined as the smallest normal color Hopf subalgebra of $A$ such that, considered the cokernel of the inclusion $[X,Y]_{\mathrm{Huq}}\to A$ in $\Hc$ which is given by the canonical projection $q:A\to A/A[X,Y]^{+}_{\mathrm{Huq}}$, the color Hopf subalgebras $q(X)$ and $q(Y)$ of $A/A[X,Y]^{+}_{\mathrm{Huq}}$, which are normal by \cite[Lemma 5.4, 2)]{As}, commute in the sense of $\mathrm{Huq}$. 
The latter means that there exists $p:q(X)\otimes q(Y)\to A/A[X,Y]^{+}_{\mathrm{Huq}}$, $\overline{x}\otimes\overline{y}\mapsto\overline{xy}$ in $\Hc$ which is equivalent, by Lemma \ref{l1}, to the condition $\overline{xy}=\phi(|x|,|y|)\overline{yx}$ for all $x\in X$ and $y\in Y$ and then to $xy-\phi(|x|,|y|)yx\in A[X,Y]^{+}_{\mathrm{Huq}}$ for all $x\in X$ and $y\in Y$. \medskip

Now we give an explicit description of the Huq commutator of two normal color Hopf subalgebras $X$ and $Y$ of $A$ in $\Hc$. By analogy with the notation used in \cite{MG}, we write $[X,Y]$ for the graded subalgebra of $A$ generated by all the elements of the form $[x,y]:=\phi(|x_{2}|,|y_{1}|)x_{1}y_{1}S(x_{2})S(y_{2})$ for any $x\in X$ and any $y\in Y$. Clearly we have $[x,y]=(x\triangleright y_{1})S(y_{2})$ for all $x\in X$ and $y\in Y$. Note also that
\begin{equation}\label{propscambio}
\begin{split}
[x,y]=\phi(|x_{2}|,|y_{1}|)x_{1}y_{1}S(x_{2})S(y_{2})=\phi(|x_{2}|,|y|)x_{1}(y\triangleright S(x_{2}))\ \text{for all}\ x\in X\ \text{and}\ y\in Y.
\end{split}
\end{equation}

The action $\triangleright$ satisfies some compatibility conditions with the antipode and the comultiplication of $A$, as shown in the following result.

\begin{lemma}
    Given $A$ in $\Hc$, the following properties are satisfied:
\begin{equation}\label{xiantipode}
    S(a\triangleright b)
    =a\triangleright S(b)\ \text{for all}\ a,b\in A,
\end{equation}
\begin{equation}\label{xiDelta}
    a\triangleright bc=\phi(|a_{2}|,|b|)(a_{1}\triangleright b)(a_{2}\triangleright c)\ \text{for all}\ a,b,c\in A.
\end{equation}
\end{lemma}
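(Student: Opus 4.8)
\emph{Proof idea.} The plan is to prove the two identities separately: first \eqref{xiDelta} by a direct computation in Sweedler notation, and then \eqref{xiantipode} as a consequence of it.

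For \eqref{xiDelta} I would expand the right-hand side by $a\triangleright b=\phi(|a_2|,|b|)a_1 b S(a_2)$ and iterate the comultiplication, writing the image of $a$ under $(\Delta\otimes\Delta)\circ\Delta$ as $a_{[1]}\otimes a_{[2]}\otimes a_{[3]}\otimes a_{[4]}$. By coassociativity, $\phi(|a_2|,|b|)(a_1\triangleright b)(a_2\triangleright c)$ becomes a sum of terms $\phi(\cdots)\,a_{[1]}\,b\,S(a_{[2]})\,a_{[3]}\,c\,S(a_{[4]})$. The key observation is that, choosing the labelling so that $a_{[2]}\otimes a_{[3]}$ is the image of a single comultiplication (again by coassociativity), the antipode axiom gives $S(a_{[2]})a_{[3]}=\epsilon(\cdot)1_A$, which collapses the four-fold product to $a_1 bc S(a_2)$; moreover on the surviving terms the middle leg has degree $0$, so the associated $\phi$-factor trivializes since $\phi(0,-)=1=\phi(-,0)$. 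It then only remains to recombine the $\phi$-factor $\phi(|a_2|,|b|)$ and the two $\phi$-factors produced by the inner actions, using multiplicativity of the bicharacter, into $\phi(|a_2|,|b|+|c|)$, which is exactly the scalar in $a\triangleright(bc)$. (One notes in passing that $\xi_A(a\otimes 1_A)=a_1 S(a_2)=\epsilon(a)1_A$, so together with \eqref{xiDelta} this says $(A,\xi_A)$ is a module algebra over itself, which can also be extracted from \cite[Proposition 3.7.1]{Hec} as in the remark after Lemma \ref{AinAVecG}.)

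For \eqref{xiantipode} I would avoid the $\phi$-heavy manipulation of $S(a_1 b S(a_2))$ and argue by uniqueness of convolution inverses. By \cite[Lemma 5.4, 1)]{As} the map $\xi_A\colon A\otimes A\to A$ is a morphism of graded coalgebras, hence it is invertible in the convolution algebra $\mathrm{Hom}_{\mathrm{Vec}_G}(A\otimes A,A)$ — where $A\otimes A$ carries its coalgebra structure in $\mathrm{Vec}_G$, $\Delta_{A\otimes A}(a\otimes b)=\phi(|a_2|,|b_1|)(a_1\otimes b_1)\otimes(a_2\otimes b_2)$ — with two-sided inverse $S\circ\xi_A$. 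It therefore suffices to check that $\xi_A\circ(\mathrm{id}_A\otimes S)$ is a right convolution inverse of $\xi_A$: evaluating $\xi_A\ast\bigl(\xi_A\circ(\mathrm{id}_A\otimes S)\bigr)$ on $a\otimes b$ gives $\sum\phi(|a_2|,|b_1|)(a_1\triangleright b_1)(a_2\triangleright S(b_2))$, which by \eqref{xiDelta} equals $\sum\phi(|a_2|,|b_1|)\,\phi(|a_2|,|b_1|)^{-1}\,a\triangleright(b_1 S(b_2))=a\triangleright\bigl(\epsilon(b)1_A\bigr)=\epsilon(b)\,\xi_A(a\otimes 1_A)=\epsilon(a)\epsilon(b)1_A$, i.e.\ $u_A\epsilon_{A\otimes A}(a\otimes b)$. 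Since $\xi_A$ is already invertible, any right convolution inverse coincides with $S\circ\xi_A$, whence $S\circ\xi_A=\xi_A\circ(\mathrm{id}_A\otimes S)$, which is \eqref{xiantipode}. (A direct proof is also possible using $S(ab)=\phi(|a|,|b|)S(b)S(a)$, $\Delta(S(a))=S(a_1)\otimes S(a_2)$, cocommutativity and $S^2=\mathrm{id}_A$, but it is messier.)

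The main obstacle in both parts is the bicharacter bookkeeping, not anything conceptual. In \eqref{xiDelta} one must verify that the $\phi$-factors spuriously produced by iterating $\Delta$ either cancel pairwise or reduce via $\phi(0,-)=1$, and identify correctly — through a coassociativity rewriting — which two Sweedler legs are annihilated by the antipode axiom. In \eqref{xiantipode} the delicate point is getting the braided coproduct of $A\otimes A$ right, so that the scalar $\phi(|a_2|,|b_1|)$ it contributes is exactly cancelled by the one produced by \eqref{xiDelta}.
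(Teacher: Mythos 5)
Your proposal is correct, but only half of it follows the paper's route. For \eqref{xiDelta} your computation is essentially the paper's, just run in the opposite direction: the paper starts from $a\triangleright bc$ and inserts $\epsilon(a_{2})=S(a_{2})a_{3}$ between $b$ and $c$, while you start from $\phi(|a_{2}|,|b|)(a_{1}\triangleright b)(a_{2}\triangleright c)$, expand to a four-fold Sweedler product and collapse $S(a_{[2]})a_{[3]}$ by the antipode axiom; the bicharacter bookkeeping you describe (multiplicativity of $\phi$ in the first argument together with the fact that $\epsilon$ kills components of nontrivial degree, so $\phi(1_{G},-)=1$ on surviving terms) is exactly what makes both directions work. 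For \eqref{xiantipode}, however, you take a genuinely different and perfectly valid route: the paper proves it by a direct computation using the color anti-multiplicativity $S(ab)=\phi(|a|,|b|)S(b)S(a)$, the involutivity $S^{2}=\mathrm{Id}$ coming from cocommutativity, cocommutativity itself and the symmetry $\phi(g,h)\phi(h,g)=1$, whereas you deduce it from \eqref{xiDelta} by a convolution argument: since $\xi_{A}$ is a morphism of graded coalgebras (\cite[Lemma 5.4]{As}, quoted in the paper just before the lemma), $S\circ\xi_{A}$ is its two-sided convolution inverse in $\mathrm{Hom}_{\mathrm{Vec}_{G}}(A\otimes A,A)$, and \eqref{xiDelta} together with $\xi_{A}(a\otimes 1_{A})=\epsilon(a)1_{A}$ shows that $\xi_{A}\circ(\mathrm{Id}\otimes S)$ is a right convolution inverse, hence equal to $S\circ\xi_{A}$. (Your intermediate phrase about the factors $\phi(|a_{2}|,|b_{1}|)$ and its inverse cancelling is slightly off --- the factor produced by the braided coproduct of $A\otimes A$ is precisely the one required by \eqref{xiDelta}, so no cancellation is needed --- but the displayed conclusion $a\triangleright(b_{1}S(b_{2}))=\epsilon(a)\epsilon(b)1_{A}$ is right.) What each approach buys: the paper's computation is short and self-contained but leans on involutivity of $S$ and the symmetry of $\phi$; your argument trades those structural inputs for the previously established fact that $\xi_{A}$ is a coalgebra map, makes the logical dependence \eqref{xiDelta} $\Rightarrow$ \eqref{xiantipode} explicit, and is less error-prone on the $\phi$-bookkeeping, at the cost of proving the two identities in a fixed order rather than independently as the paper does.
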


\begin{proof}
Given $a,b,c\in A$, we can compute
\[
\begin{split}
S(a\triangleright b)&=S(\phi(|a_{2}|,|b|)a_{1}bS(a_{2}))=\phi(|a_{2}|,|b|)\phi(|a_{1}b|,|a_{2}|)S(S(a_{2}))S(a_{1}b)=\phi(|a_{1}|,|a_{2}|)a_{2}S(a_{1}b)\\&=a_{1}S(a_{2}b)=\phi(|a_{2}|,|b|)a_{1}S(b)S(a_{2})=a\triangleright S(b),
\end{split}
\]
so that \eqref{xiantipode} holds true and
\[
\begin{split}
    a\triangleright bc&=\phi(|a_{2}|,|bc|)a_{1}bcS(a_{2})=\phi(|a_{3}|,|bc|)a_{1}\epsilon(a_{2})bcS(a_{3})=\phi(|a_{2}|,|b|)\phi(|a_{3}|,|bc|)a_{1}b\epsilon(a_{2})cS(a_{3})\\&=\phi(|a_{2}\otimes a_{3}|,|b|)\phi(|a_{4}|,|bc|)a_{1}bS(a_{2})a_{3}cS(a_{4})=\phi(|a_{2}|,|b|)\phi(|a_{3}|,|bc|)(a_{1}\triangleright b)a_{2}cS(a_{3})\\&=\phi(|a_{2}\otimes a_{3}|,|b|)\phi(|a_{3}|,|c|)(a_{1}\triangleright b)a_{2}cS(a_{3})=\phi(|a_{2}|,|b|)(a_{1}\triangleright b)(a_{2}\triangleright c)
\end{split}
\]
and then also \eqref{xiDelta} is satisfied. 
\end{proof}

Recall that we already know that $\xi_{A}$ is a morphism of graded coalgebras for all $A$ in $\Hc$, i.e., we have
\begin{equation}\label{xiofcoalg}
(a\triangleright b)_{1}\otimes(a\triangleright b)_{2}=\phi(|a_{2}|,|b_{1}|)(a_{1}\triangleright b_{1})\otimes(a_{2}\triangleright b_{2})\ \text{and}\ \epsilon(a\triangleright b)=\epsilon(a)\epsilon(b)
\end{equation}
for all $a,b\in A$. Moreover, we also know that, given $f:A\to B$ in $\Hc$, we have $f(a\triangleright b)=f(a)\triangleright f(b)$ for all $a,b\in A$.

\begin{proposition}\label{[X,Y]normal}
The graded algebra $[X,Y]$ is a normal color Hopf subalgebra of $A$.
\end{proposition}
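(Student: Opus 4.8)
The plan is to verify directly that $[X,Y]$ is (i) a graded subalgebra — which holds by definition — (ii) a subcoalgebra, (iii) stable under the antipode, hence a color Hopf subalgebra, and finally (iv) stable under the action $\triangleright$ of $A$, hence normal by the characterization of normal color Hopf subalgebras recalled before Lemma~\ref{AinAVecG} (that is, \cite[Corollary 5.21]{As}). Since $[X,Y]$ is generated as a graded algebra by the elements $[x,y]=(x\triangleright y_{1})S(y_{2})$, and each of the three remaining properties is preserved under products (for the coalgebra and action conditions using the multiplicativity formulas \eqref{xiDelta} and the compatibility $\Delta(ab)=\phi(|a_{2}|,|b_{1}|)a_{1}b_{1}\ot a_{2}b_{2}$, and for the antipode using $S(ab)=\phi(|a|,|b|)S(b)S(a)$), it suffices to check each property on the generators $[x,y]$.

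First I would compute $\Delta([x,y])$. Using that $\xi_{A}$ is a morphism of coalgebras \eqref{xiofcoalg}, cocommutativity, and the bicharacter identities, one finds that $\Delta([x,y])$ is a sum of terms of the shape $[x',y']\ot[x'',y'']$ (with appropriate $\phi$-coefficients and with $x',x''$ among the components of $x$, similarly for $y$); this shows $[X,Y]$ is a subcoalgebra, and $\epsilon([x,y])=\epsilon(x)\epsilon(y)$ is immediate from \eqref{xiofcoalg}. Next, $S([x,y])$: applying $S$ to $(x\triangleright y_{1})S(y_{2})$, using $S(ab)=\phi(|a|,|b|)S(b)S(a)$, $S^{2}=\id$ on a cocommutative Hopf algebra, and \eqref{xiantipode}, I expect to rewrite $S([x,y])$ again as a product of elements of the form $[x',y']$ (essentially $S([x,y])$ is, up to a scalar and reindexing of the Sweedler components, of the form $\phi(\cdots)[x_{?},y_{?}]$), so $[X,Y]$ is closed under $S$. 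This establishes that $[X,Y]$ is a color Hopf subalgebra.

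The main obstacle is the normality step: showing $a\triangleright [x,y]\in[X,Y]$ for all $a\in A$. Since $[X,Y]$ is a subalgebra and $a\triangleright(-)$ is multiplicative in the sense of \eqref{xiDelta}, it is enough to treat generators, so I must show $a\triangleright[x,y]\in[X,Y]$. Writing $[x,y]=(x\triangleright y_{1})S(y_{2})$ and using \eqref{xiDelta} and \eqref{xiantipode},
\[
a\triangleright[x,y]=\phi(|a_{2}|,|x\triangleright y_{1}|)\,\big(a_{1}\triangleright(x\triangleright y_{1})\big)\,\big(a_{2}\triangleright S(y_{2})\big)
=\phi(|a_{2}|,|x\triangleright y_{1}|)\,\big((a_{1}x)\triangleright y_{1}\big)\,S(a_{2}\triangleright y_{2}),
\]
using the associativity relation $a\triangleright(x\triangleright c)=(ax)\triangleright c$ from \eqref{actionxi}. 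Now the key point is that $X$ and $Y$ are \emph{normal} in $A$: hence $a_{1}x$ need not lie in $X$, but $a\triangleright y_{2}\in Y$, and one rewrites using cocommutativity and an ``$S$-and-back'' insertion ($1_{A}=S(a_{3})a_{4}$ type manipulations, exactly as in the proof of Lemma~\ref{l1}, 3)$\Rightarrow$2)) to express $(a_{1}x)\triangleright y_{1}$ in terms of $x\triangleright(\text{something in }Y)$ conjugated by elements of $A$; more precisely I expect the identity
\[
(ax)\triangleright y = \big(a_{1}\triangleright\big(x\triangleright(S(a_{2})\triangleright y)\big)\big)\cdot(\text{correction})
\]
to let one reorganize $a\triangleright[x,y]$ as a ($\phi$-twisted) product of a term $[x, y']$ with $y'=a_{?}\triangleright y_{?}\in Y$ and conjugating factors that cancel, landing inside $[X,Y]$. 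Carrying out this Sweedler bookkeeping carefully — keeping track of all bicharacter factors via the bicharacter axioms and cocommutativity — is the technical heart of the argument; everything else is routine. Once normality is established, $[X,Y]$ is a normal color Hopf subalgebra of $A$, as claimed.
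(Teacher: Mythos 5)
Your overall strategy coincides with the paper's: reduce everything to the generators $[x,y]$ and verify the coalgebra, antipode and normality conditions on them using \eqref{xiofcoalg}, \eqref{xiantipode}, \eqref{xiDelta} and \eqref{actionxi}; the reduction to generators is sound, and the subcoalgebra step works out exactly as you predict, namely $\Delta([x,y])=\phi(|x_{2}|,|y_{1}|)[x_{1},y_{1}]\otimes[x_{2},y_{2}]$ and $\epsilon([x,y])=\epsilon(x)\epsilon(y)$. However, the two steps you leave as ``expected'' are the heart of the proposition, and your predicted outcomes are not what the computation gives. For the antipode, one finds $S([x,y])=\phi(|x|,|y|)[y,x]$: the arguments come out \emph{swapped}, so what you get is a generator of $[Y,X]$, not something ``of the form $[x_{?},y_{?}]$ up to reindexing of Sweedler components''. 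Concluding that it lies in $[X,Y]$ therefore needs a further argument that $[Y,X]\subseteq[X,Y]$ (the color analogue of the group identity $[y,x]=[\,y\triangleright x,\,y^{-1}\,]$, which uses normality of $X$ and that $Y$ is closed under $S$); the paper's proof records the identity $S([x,y])=\phi(|x|,|y|)[y,x]$ via \eqref{propscambio} and asserts the membership, but your sketch glosses over the swap entirely.

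For normality, the ``expected identity'' $(ax)\triangleright y=\bigl(a_{1}\triangleright(x\triangleright(S(a_{2})\triangleright y))\bigr)\cdot(\text{correction})$ is not correct as a product formula: by \eqref{actionxi} the bracketed term equals $(a_{1}xS(a_{2}))\triangleright y$, and its discrepancy with $(ax)\triangleright y$ is not a multiplicative correction factor. The right manipulation is the one you allude to but do not carry out: insert $\epsilon(a_{2})=S(a_{2})a_{3}$ \emph{inside} the action and use \eqref{actionxi}, which gives, up to bicharacter factors, $(ax)\triangleright y=(a_{1}\triangleright x)\triangleright(a_{2}\triangleright y)$, and hence the identity the paper derives,
\[
a\triangleright[x,y]=\phi(|a_{2}|,|x|)\,[\,(a_{1}\triangleright x),\,(a_{2}\triangleright y)\,],
\]
where \emph{both} arguments get conjugated; this lies in $[X,Y]$ precisely because $X$ and $Y$ are both normal, so $a_{1}\triangleright x\in X$ and $a_{2}\triangleright y\in Y$. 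Your predicted final shape --- a term $[x,y']$ with the first argument $x$ kept fixed, times ``conjugating factors that cancel'' --- cannot be achieved in general, so the guess would steer the verification in the wrong direction. In short: the plan is the paper's plan, but the decisive Sweedler computations are missing, and where you anticipate their outcome (antipode and normality) the anticipated form is wrong; writing out the displayed identity, together with the $[Y,X]\subseteq[X,Y]$ point for the antipode, is what actually closes the proof.
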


\begin{proof}
By definition, $[X,Y]$ is a graded subalgebra of $A$. Thus, if we show that it is a graded subcoalgebra of $A$ and that it is closed under the antipode of $A$, we obtain that it is a color Hopf subalgebra of $A$. 
From the fact that $\xi_{A}$ is a morphism of graded coalgebras, 
we obtain that $[X,Y]$ is a subcoalgebra of $A$. 
Indeed, we can compute
\[
\begin{split}
    \Delta_{A}([x,y])&=\Delta_{A}((x\triangleright y_{1})S(y_{2}))=\phi(|(x\triangleright y_{1})_{2}|,|S(y_{2})_{1}|)(x\triangleright y_{1})_{1}S(y_{2})_{1}\otimes(x\triangleright y_{1})_{2}S(y_{2})_{2}\\&\overset{\eqref{xiofcoalg}}{=}\phi(|x_{2}|,|y_{1}|)\phi(|x_{2}\triangleright y_{2}|,|S(y_{3})|)(x_{1}\triangleright y_{1})S(y_{3})\otimes(x_{2}\triangleright y_{2})S(y_{4})\\&=\phi(|x_{2}|,|y_{1}|)\phi(|x_{2}|,|y_{2}|)(x_{1}\triangleright y_{1})S(y_{2})\otimes(x_{2}\triangleright y_{3})S(y_{4})\\&=\phi(|x_{2}|,|y_{1}|)[x_{1},y_{1}]\otimes[x_{2},y_{2}]\in[X,Y]\otimes[X,Y].
\end{split}
\]
Moreover, we have 
\[
\begin{split}
     S_{A}([x,y])&=S_{A}((x\triangleright y_{1})S_{A}(y_{2}))=\phi(|x\otimes y_{1}|,|y_{2}|)y_{2}S_{A}(x\triangleright y_{1})=\phi(|x|,|y_{1}|)y_{1}S_{A}(x\triangleright y_{2})\\&\overset{\eqref{xiantipode}}{=}\phi(|x|,|y_{1}|)y_{1}(x\triangleright S(y_{2}))
     \overset{\eqref{propscambio}}{=}\phi(|x|,|y|)[y,x]\in[X,Y],
\end{split}
\]
hence $[X,Y]$ is a color Hopf subalgebra of $A$. Finally, we show that $[X,Y]$ is normal:
\[
\begin{split}
    a\triangleright[x,y]&=a\triangleright((x\triangleright y_{1})S(y_{2}))\overset{\eqref{xiDelta}}{=}\phi(|a_{2}|,|x\triangleright y_{1}|)(a_{1}\triangleright(x\triangleright y_{1}))(a_{2}\triangleright S(y_{2}))\\&\overset{\eqref{actionxi}}{=}\phi(|a_{2}|,|x\otimes y_{1}|)(a_{1}x\triangleright y_{1})(a_{2}\triangleright S(y_{2}))=\phi(|a_{2}\otimes a_{3}|,|x\otimes y_{1}|)(a_{1}x\triangleright y_{1})(\epsilon(a_{2})a_{3}\triangleright S(y_{2}))\\&=\phi(|a_{2}|,|x|)\phi(|a_{3}|,|x\otimes y_{1}|)(a_{1}x\epsilon(a_{2})\triangleright y_{1})(a_{3}\triangleright S(y_{2}))\\&=\phi(|a_{2}\otimes a_{3}|,|x|)\phi(|a_{4}|,|x\otimes y_{1}|)(a_{1}xS(a_{2})a_{3}\triangleright y_{1})(a_{4}\triangleright S(y_{2}))\\&\overset{\eqref{actionxi}}{=}\phi(|a_{2}\otimes a_{3}|,|x|)\phi(|a_{4}|,|x\otimes y_{1}|)((a_{1}xS(a_{2}))\triangleright(a_{3}\triangleright y_{1}))(a_{4}\triangleright S(y_{2}))\\&\overset{\eqref{xiantipode}}{=}\phi(|a_{3}|,|x|)\phi(|a_{4}|,|x\otimes y_{1}|)((\phi(|a_{2}|,|x|)a_{1}xS(a_{2}))\triangleright(a_{3}\triangleright y_{1}))S(a_{4}\triangleright y_{2})\\&=\phi(|a_{2}|,|x|)\phi(|a_{3}|,|x\otimes y_{1}|)((a_{1}\triangleright x)\triangleright(a_{2}\triangleright y_{1}))S(a_{3}\triangleright y_{2})\\&=\phi(|a_{2}\otimes a_{3}|,|x|)\phi(|a_{3}|,|y_{1}|)((a_{1}\triangleright x)\triangleright(a_{2}\triangleright y_{1}))S(a_{3}\triangleright y_{2})\\&=\phi(|a_{2}|,|x|)((a_{1}\triangleright x)\triangleright(a_{2}\triangleright y)_{1})S((a_{2}\triangleright y)_{2})\\&=\phi(|a_{2}|,|x|)[(a_{1}\triangleright x),(a_{2}\triangleright y)]\in[X,Y]
\end{split}
\]
and then the thesis follows.
\end{proof}

\begin{proposition}\label{prop:commutator}
    Given $X$ and $Y$ normal color Hopf subalgebras of $A$, then $[X,Y]=[X,Y]_{\mathrm{Huq}}$.
\end{proposition}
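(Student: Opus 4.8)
The plan is to prove the two inclusions $[X,Y]\subseteq[X,Y]_{\mathrm{Huq}}$ and $[X,Y]_{\mathrm{Huq}}\subseteq[X,Y]$ separately, using Proposition \ref{[X,Y]normal} together with the characterization of the Huq commutator recalled just before it. By Proposition \ref{[X,Y]normal} we already know that $[X,Y]$ is a normal color Hopf subalgebra of $A$, so both sides are normal subobjects and it suffices to compare them as color Hopf subalgebras.

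For the inclusion $[X,Y]_{\mathrm{Huq}}\subseteq[X,Y]$: since $[X,Y]$ is normal, it has a cokernel $q\colon A\to A/A[X,Y]^{+}$ in $\Hc$, and by the description recalled above it is enough to show that $q(X)$ and $q(Y)$ commute in the sense of Huq, i.e.\ by Lemma \ref{l1} that $\overline{xy}=\phi(|x|,|y|)\overline{yx}$ for all $x\in X$, $y\in Y$, equivalently $xy-\phi(|x|,|y|)yx\in A[X,Y]^{+}$. The key computation is to rewrite $xy$ in terms of a generator $[x,y]$ of $[X,Y]$. Indeed, using counitality and the definition $[x,y]=\phi(|x_{2}|,|y_{1}|)x_{1}y_{1}S(x_{2})S(y_{2})$, one has
\[
xy=\phi(|x_{2}|,|y_{1}|)x_{1}y_{1}S(x_{2})S(y_{2})\,x_{3}y_{3}=[x_{1},y_{1}]\,x_{2}y_{2},
\]
up to the appropriate $\phi$-factors coming from the braiding, so that $xy$ lies in $[X,Y]^{+}\cdot(\text{something})$ plus $\phi(|x|,|y|)yx$; modulo $A[X,Y]^{+}$ this gives exactly $\overline{xy}=\phi(|x|,|y|)\overline{yx}$. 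Hence $q(X)$ and $q(Y)$ commute, so the smallest normal subobject with this property, namely $[X,Y]_{\mathrm{Huq}}$, is contained in $[X,Y]$.

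For the reverse inclusion $[X,Y]\subseteq[X,Y]_{\mathrm{Huq}}$: let $q'\colon A\to A/A[X,Y]^{+}_{\mathrm{Huq}}$ be the cokernel of $[X,Y]_{\mathrm{Huq}}$. By definition $q'(X)$ and $q'(Y)$ commute, so by Lemma \ref{l1} (condition 3) applied in the quotient) we get $\phi(|x_{2}|,|y_{1}|)\overline{x_{1}y_{1}S(x_{2})S(y_{2})}=\epsilon(x)\epsilon(y)\overline{1}$, i.e.\ $q'([x,y])=\epsilon(x)\epsilon(y)1$ for every generator $[x,y]$ of $[X,Y]$. Since $q'$ is an algebra map and the $[x,y]$ generate $[X,Y]$ as a graded algebra, it follows that $[X,Y]^{+}\subseteq\mathrm{Ker}(q')$; as $\mathrm{Ker}(q')$ corresponds to the Hopf ideal $A[X,Y]^{+}_{\mathrm{Huq}}$, we get that $[X,Y]$ is contained in the normal color Hopf subalgebra $[X,Y]_{\mathrm{Huq}}$ (using \cite[Corollary 5.21]{As} to translate between kernels and normal subalgebras). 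Combining the two inclusions yields $[X,Y]=[X,Y]_{\mathrm{Huq}}$.

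The main obstacle I expect is bookkeeping the braiding factors $\phi$ correctly in the first inclusion — in particular checking that the identity expressing $xy$ through $[x_{1},y_{1}]$ and lower-order terms really does produce the factor $\phi(|x|,|y|)$ and nothing else when one passes to the quotient, and making sure one only uses that $A[X,Y]^{+}$ is a (two-sided, $\phi$-twisted) ideal. The normality of $[X,Y]$, already established in Proposition \ref{[X,Y]normal}, is what guarantees that $A[X,Y]^{+}$ is such an ideal and that the quotient lives in $\Hc$, so no new structural input is needed beyond careful computation.
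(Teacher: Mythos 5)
Your overall strategy is the same as the paper's: Proposition \ref{[X,Y]normal} supplies normality of $[X,Y]$, one inclusion is obtained by showing that the images of $X$ and $Y$ commute in $A/A[X,Y]^{+}$, and the other by showing that any morphism under which the images of $X$ and $Y$ commute (you specialize to the cokernel $q'$ of $[X,Y]_{\mathrm{Huq}}$, the paper argues for an arbitrary such $f:A\to B$) must contain $[X,Y]$ in its kernel, via Lemma \ref{l1} applied in the codomain. Your second half is essentially correct; just note that the categorical kernel in $\Hc$ is the Hopf kernel $\mathrm{Hker}(q')=\{a\in A:\ a_{1}\otimes q'(a_{2})=a\otimes\overline{1}\}$, not the ideal-theoretic kernel, so from ``all generators $[x,y]$ are sent to $\epsilon(x)\epsilon(y)\overline{1}$'' you should conclude that $q'$ restricts to $u\epsilon$ on all of $[X,Y]$ and then use $\Delta([X,Y])\subseteq[X,Y]\otimes[X,Y]$ (established in Proposition \ref{[X,Y]normal}) to get $[X,Y]\subseteq\mathrm{Hker}(q')=[X,Y]_{\mathrm{Huq}}$; this is exactly the computation the paper performs on generators.

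The genuine problem is the displayed identity in your first inclusion, and it is not a matter of $\phi$-bookkeeping: the trailing factors must come in the order $y_{2}x_{2}$, not $x_{2}y_{2}$. The correct identity is $xy=\phi(|x_{2}|,|y|)[x_{1},y_{1}]\,y_{2}x_{2}$; your version $xy=[x_{1},y_{1}]\,x_{2}y_{2}$ already fails for group-likes in an ordinary group algebra, since $[g,h]gh=ghg^{-1}h^{-1}gh\neq gh$ unless $g$ and $h$ commute. Moreover, even if one granted your identity, reducing it modulo $A[X,Y]^{+}$ only yields $\overline{xy}=\epsilon([x_{1},y_{1}])\overline{x_{2}y_{2}}=\overline{xy}$, a tautology rather than the required relation $\overline{xy}=\phi(|x|,|y|)\overline{yx}$. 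What actually closes the argument (and is what the paper does) is to pair the corrected identity with the companion rewriting $\phi(|x|,|y|)yx=\phi(|x_{2}|,|y|)\epsilon([x_{1},y_{1}])\,y_{2}x_{2}$, so that $xy-\phi(|x|,|y|)yx=\big([x_{1},y_{1}]-\epsilon([x_{1},y_{1}])\big)\phi(|x_{2}|,|y|)y_{2}x_{2}\in[X,Y]^{+}A=A[X,Y]^{+}$, the equality of these two ideals being available because $[X,Y]$ is normal. With that repair your proof coincides with the paper's.
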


\begin{proof}
Recall that $[X,Y]_{\mathrm{Huq}}$ is defined as the smallest normal color Hopf subalgebra of $A$ such that 
\begin{equation}\label{eqHuq}
xy-\phi(|x|,|y|)yx\in A[X,Y]_{\mathrm{Huq}}^{+}=[X,Y]_{\mathrm{Huq}}^{+}A 
\end{equation}
for all $x\in X$ and $y\in Y$. By Proposition \ref{[X,Y]normal} we already know that $[X,Y]$ is a normal color Hopf subalgebra of $A$, hence we only have to show that it is the smallest which satisfies \eqref{eqHuq}. Observe that
\[
\begin{split}
xy&=x_{1}\epsilon(x_{2})y=\phi(|x_{2}|,|y|)x_{1}y\epsilon(x_{2})=\phi(|x_{2}\otimes x_{3}|,|y|)x_{1}y_{1}\epsilon(y_{2})S(x_{2})x_{3}\\&=\phi(|x_{2}|,|y_{1}|)\phi(|x_{3}|,|y|)x_{1}y_{1}S(x_{2})\epsilon(y_{2})x_{3}=\phi(|x_{2}|,|y_{1}|)\phi(|x_{3}|,|y|)x_{1}y_{1}S(x_{2})S(y_{2})y_{3}x_{3}\\&=\phi(|x_{2}|,|y|)[x_{1},y_{1}]y_{2}x_{2}
\end{split}
\]
and 
\[
\begin{split}
\phi(|x|,|y|)yx&=\phi(|x_{2}|,|y|)\epsilon(x_{1})\epsilon(y_{1})y_{2}x_{2}=\phi(|x_{3}|,|y|)\epsilon(x_{1})\epsilon(x_{2})\epsilon(y_{1})\epsilon(y_{2})y_{3}x_{3}\\&=\phi(|x_{3}|,|y|)\phi(|x_{2}|,|y_{1}|)\epsilon(x_{1})\epsilon(y_{1})\epsilon(S(x_{2}))\epsilon(S(y_{2}))y_{3}x_{3}\\&=\phi(|x_{3}|,|y|)\epsilon(\phi(|x_{2}|,|y_{1}|)x_{1}y_{1}S(x_{2})S(y_{2}))y_{3}x_{3}\\&=\phi(|x_{2}|,|y|)\epsilon([x_{1},y_{1}])y_{2}x_{2},
\end{split}
\]
so that we obtain
\[
xy-\phi(|x|,|y|)yx=\big([x_{1},y_{1}]-\epsilon([x_{1},y_{1}])\big)\phi(|x_{2}|,|y|)y_{2}x_{2}\in[X,Y]^{+}A,
\]
hence $[X,Y]$ satisfies \eqref{eqHuq}. 
Finally, we prove that $[X,Y]$ is the smallest normal color Hopf subalgebra which satisfies \eqref{eqHuq} showing that $[X,Y]\subseteq\mathrm{Hker}(f)$ for every morphism $f:A\to B$ in $\Hc$ such that $f(X)$ and $f(Y)$ commute in $B$. Hence, given $[x,y]=(x\triangleright y_{1})S(y_{2})$ in $[X,Y]$ for $x\in X$ and $y\in Y$, we can compute
\[
\begin{split}
    [x,y]_{1}\otimes f([x,y]_{2})&=((x\triangleright y_{1})S(y_{2}))_{1}\otimes f(((x\triangleright y_{1})S(y_{2}))_{2})\\&=\phi(|(x\triangleright y_{1})_{2}|,|S(y_{2})_{1}|)(x\triangleright y_{1})_{1}S(y_{2})_{1}\otimes f((x\triangleright y_{1})_{2}S(y_{2})_{2})\\&\overset{\eqref{xiofcoalg}}{=}\phi(|x_{2}\otimes y_{2}|,|y_{3}|)\phi(|x_{2}|,|y_{1}|)(x_{1}\triangleright y_{1})S(y_{3})\otimes f((x_{2}\triangleright y_{2})S(y_{4}))\\&=\phi(|x_{2}|,|y_{1}|)\phi(|x_{2}|,|y_{2}|)(x_{1}\triangleright y_{1})S(y_{2})\otimes f(x_{2}\triangleright y_{3})f(S(y_{4}))\\&=\phi(|x_{2}|,|y_{1}|)\phi(|x_{2}|,|y_{2}|)(x_{1}\triangleright y_{1})S(y_{2})\otimes(f(x_{2})\triangleright f(y_{3}))f(S(y_{4}))\\&
    \overset{(*)}{=}\phi(|x_{2}|,|y_{1}|)\phi(|x_{2}|,|y_{2}|)(x_{1}\triangleright y_{1})S(y_{2})\otimes \epsilon(f(x_{2}))f(y_{3})f(S(y_{4}))\\&=\phi(|x_{2}|,|y_{1}|)\phi(|x_{2}|,|y_{2}|)(x_{1}\triangleright y_{1})S(y_{2})\otimes\epsilon(x_{2})f(y_{3}S(y_{4}))\\&=\phi(|x_{2}|,|y_{1}|)\phi(|x_{2}|,|y_{2}|)(x_{1}\triangleright y_{1})S(y_{2})\otimes\epsilon(x_{2})\epsilon(y_{3})1_{B}\\&=(x\triangleright y_{1})S(y_{2})\otimes\epsilon(y_{3})1_{B}=[x,y]\otimes1_{B}
\end{split}
\]
where $(*)$ holds true since $f(X)$ and $f(Y)$ commute in $B$, using 4) of Lemma \ref{l1}.
Hence $[X,Y]$ is the Huq commutator of $X$ and $Y$ in $\Hc$.
\end{proof}

The previous result generalizes \cite[Proposition 4.3]{MG} given for the category of cocommutative Hopf algebras. \medskip

\section{The Hall’s criterion and the Zassenhaus Lemma in $\Hc$}

A color Hopf algebra $A$ is called \textit{nilpotent} if $\gamma^{n}_{A}(A)=0$ for a certain $n\in\mathbb{N}$ where, denoted by $\mathrm{Norm}(A)$ the set of all normal color Hopf subalgebras of $A$, the map $\gamma_{A}:\mathrm{Norm}(A)\to\mathrm{Norm}(A)$ sends a normal color Hopf subalgebra $X$ of $A$ to $[A,X]_{\mathrm{Huq}}$, which is $[A,X]$ by Proposition \ref{prop:commutator}. The least such $n$ is the nilpotency class of $A$. Recall that, by \cite[Lemma 5.22]{As}, regular epimorphisms in $\Hc$ are exactly the surjective maps. Moreover, in \cite[Proposition 6.3]{As} we showed that $\Hc$ is locally algebraically cartesian closed, then algebraically coherent by \cite[Theorem 4.5]{CGV}. Hence we have the Hall's criterion for nilpotence in $\Hc$, given by the following result in view of Theorem \ref{nilpotence}:

\begin{proposition}
Let $p:E\to B$ be a surjective map in $\Hc$, where $B$ is nilpotent in $\Hc$. If $\mathrm{Hker}(p)\subseteq[N,N]$ for a nilpotent normal color Hopf subalgebra $N$ of $E$, then $E$ is nilpotent. Furthermore, if $N$ is of nilpotency class $c$ and $B$ is of nilpotency class $d$, then $E$ is of nilpotency class at most $\frac{c(c+1)}{2}(d-1)+c$.
\end{proposition}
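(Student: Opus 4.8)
The plan is to obtain this as a direct instance of Theorem \ref{nilpotence}, the categorical Hall's criterion, applied to $\Cc=\Hc$; the whole argument consists of verifying that the hypotheses of that theorem hold for $\Hc$ and then reading the conclusion back in Hopf-algebraic language. First I would record that $\Hc$ is an algebraically coherent semi-abelian category: it is semi-abelian by \cite[Theorem 6.1]{As} under the standing assumptions on $G$ and $\Bbbk$, it is locally algebraically cartesian closed by \cite[Proposition 6.3]{As}, and hence it is algebraically coherent by \cite[Theorem 4.5]{CGV}. This is exactly the ambient hypothesis required by Theorem \ref{nilpotence}.

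Next I would translate the data of the statement into categorical terms. By \cite[Lemma 5.22]{As} the surjective morphism $p:E\to B$ is precisely a regular epimorphism in $\Hc$. By \cite[Corollary 5.21]{As} the normal color Hopf subalgebra $N$ of $E$ is precisely a normal subobject of $E$, and its nilpotency as a color Hopf algebra (defined through $\gamma^{n}_{N}(N)=\mathbf{0}$, where $\gamma_{N}(X)=[N,X]=[N,X]_{\mathrm{Huq}}$ by Proposition \ref{prop:commutator}) is exactly nilpotency of $N$ as an object of $\Hc$, with matching nilpotency class $c$; likewise $B$ being a nilpotent color Hopf algebra of class $d$ means exactly that $B$ is a nilpotent object of class $d$. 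Finally, the categorical kernel of $p$ in $\Hc$ is the Hopf kernel $\mathrm{Hker}(p)$, and by Proposition \ref{prop:commutator} the Huq commutator $[N,N]_{\mathrm{Huq}}$ (which is normal in $E$ by Proposition \ref{[X,Y]normal}) coincides with $[N,N]$, so the hypothesis $\mathrm{Hker}(p)\subseteq[N,N]$ is literally the hypothesis that $\mathrm{Ker}(p)$ is contained in $[N,N]_{\mathrm{Huq}}$ as subobjects of $E$.

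With all hypotheses in place, Theorem \ref{nilpotence} applies and yields that $E$ is a nilpotent object of $\Hc$, i.e.\ a nilpotent color Hopf algebra, of nilpotency class at most $\frac{c(c+1)}{2}(d-1)+c$; translating back gives precisely the asserted bound. I do not expect a genuine obstacle here beyond bookkeeping: the only points needing care are the identifications of ``regular epimorphism'', ``normal subobject'', ``kernel'', and ``Huq commutator'' in $\Hc$ with their concrete Hopf-algebraic counterparts, all of which are furnished by the cited results from \cite{As} together with Propositions \ref{[X,Y]normal} and \ref{prop:commutator}, and the observation that the categorical notion of nilpotency (iterated $\gamma$) agrees with the one stated at the beginning of this section because $[A,X]=[A,X]_{\mathrm{Huq}}$.
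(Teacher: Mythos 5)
Your proposal is correct and follows essentially the same route as the paper: the paper likewise deduces this proposition directly from Theorem \ref{nilpotence}, using that $\Hc$ is semi-abelian, locally algebraically cartesian closed by \cite[Proposition 6.3]{As} and hence algebraically coherent by \cite[Theorem 4.5]{CGV}, that surjections are the regular epimorphisms by \cite[Lemma 5.22]{As}, and that the Huq commutator is $[\cdot,\cdot]$ by Proposition \ref{prop:commutator}. Your added bookkeeping about identifying normal subobjects, kernels, and the nilpotency notion is exactly the translation the paper leaves implicit.
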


Moreover, we can also describe the Zassenhaus Lemma in $\Hc$.

\begin{lemma}\label{KM=MK}
    Let $M$ and $K$ be two color Hopf subalgebras of a cocommutative color Hopf algebra $A$, with $K$ normal in $A$. Then, the graded vector spaces $KM$ and $MK$, generated by linear combinations of elements of the form $km$ and $mk$ with $k\in K$ and $m\in M$ respectively, are the same object in $\mathrm{Vec}_{G}$.
\end{lemma}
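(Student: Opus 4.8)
The plan is to establish the equality $KM=MK$ of subspaces of $A$ by first proving $KM\subseteq MK$ and then deducing $MK\subseteq KM$ by applying the antipode; since $K$ and $M$ are graded it suffices to work with homogeneous $k\in K$ and $m\in M$. Two preliminary facts are needed. First, $S^{2}=\mathrm{Id}$ on any cocommutative color Hopf algebra: applying $S$ to $\sum a_{1}S(a_{2})=\epsilon(a)1_{A}$ and using $S(bc)=\phi(|b|,|c|)S(c)S(b)$, $\Delta(S(a))=S(a_{1})\otimes S(a_{2})$ and cocommutativity shows that $S^{2}$ is a convolution inverse of $S$, hence $S^{2}=\mathrm{Id}$; consequently $S$ is a degree-preserving linear automorphism of $A$, and $S(K)=K$, $S(M)=M$, since a color Hopf subalgebra is $S$-stable and $S$ restricts to its own inverse on it. Second, the ``right-handed'' form of normality of $K$: substituting $S(a)$ for $a$ in $a\triangleright x=\phi(|a_{2}|,|x|)a_{1}xS(a_{2})\in K$ and simplifying with $\Delta(S(a))=S(a_{1})\otimes S(a_{2})$ and $S^{2}=\mathrm{Id}$ gives
\[
S(a)\triangleright x=\phi(|a_{2}|,|x|)\,S(a_{1})\,x\,a_{2}\in K\qquad\text{for all }a\in A,\ x\in K .
\]

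For the inclusion $KM\subseteq MK$ I would prove, for homogeneous $k\in K$ and $m\in M$, the identity
\[
\sum m_{1}\,\big(S(m_{2})\triangleright k\big)=\phi(|m|,|k|)\,km .
\]
This is a direct computation: one expands $S(m_{2})\triangleright k$ by splitting $m_{2}$ once more, multiplies on the left by $m_{1}$, and regroups the iterated coproduct of $m$ so that a factor $\sum m_{1}S(m_{2})=\epsilon(m_{1})1_{A}$ appears and collapses; what is left is $\sum\epsilon(m_{1})\,\phi(|m_{2}|,|k|)\,km_{2}=\phi(|m|,|k|)\,km$, because $\epsilon(m_{1})\neq 0$ forces $|m_{1}|$ to be trivial, hence $|m_{2}|=|m|$. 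Since $S(m_{2})\triangleright k\in K$ by the right-handed normality above and $m_{1}\in M$, the left-hand side lies in $MK$; as $\phi(|m|,|k|)\neq 0$ this gives $km\in MK$, and hence $KM\subseteq MK$ by linearity.

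The reverse inclusion then follows by applying $S$. As $S$ is a degree-preserving linear automorphism of $A$ with $S(mk)=\phi(|m|,|k|)S(k)S(m)$ and $S(K)=K$, $S(M)=M$, one has $S(KM)=MK$ and $S(MK)=KM$ as graded subspaces; applying $S$ to $KM\subseteq MK$ yields $MK\subseteq KM$. Together with the first inclusion this gives $KM=MK$ in $\mathrm{Vec}_{G}$, as claimed. (Alternatively, $MK\subseteq KM$ can be obtained directly by a computation mirroring the displayed identity, at the cost of repeating the same $\phi$-bookkeeping.)

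The only real obstacle is exactly that bicharacter bookkeeping: each use of cocommutativity, of the twisted anti-multiplicativity of $S$, and of $\Delta(S(a))=S(a_{1})\otimes S(a_{2})$ produces extra factors of $\phi$, and one must exploit that the counit annihilates everything outside trivial degree for all the stray scalars to amalgamate into the single prefactor $\phi(|m|,|k|)$. For $\phi\equiv 1$ (the case $G=\{1\}$) this is the classical argument for cocommutative Hopf algebras, with no scalars to track.
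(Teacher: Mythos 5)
Your proof is correct, and its first half is essentially the paper's argument: the paper also proves $KM\subseteq MK$ by inserting $m_{1}S(m_{2})$ and recognizing the result, up to a $\phi$-scalar, as $m_{1}\,(S(m_{2})\triangleright k)$ with $S(m_{2})\triangleright k\in K$ by normality (your displayed identity matches the paper's computation, the scalar $\phi(|m|,|k|)$ versus $\phi(|k|,|m|)^{-1}$ agreeing via the skew-symmetry $\phi(g,h)\phi(h,g)=1$, which the paper indeed uses elsewhere, e.g.\ in the proof of Lemma \ref{l1}). Where you diverge is the reverse inclusion: the paper proves $MK\subseteq KM$ by a second, even shorter direct computation, $mk=m_{1}\epsilon(m_{2})k=\phi(|m_{2}|,|k|)\phi(|m_{3}|,|k|)\,m_{1}kS(m_{2})m_{3}$, which exhibits $mk$ as a scalar multiple of $(m_{1}\triangleright k)m_{2}\in KM$ using normality with $a=m_{1}$; no antipode gymnastics are needed, and in particular the paper never invokes $S^{2}=\mathrm{Id}$ (in its first computation it simply leaves $S(S(m_{3}))$ untouched). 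Your route instead derives $MK\subseteq KM$ by applying $S$ to the first inclusion, which obliges you to establish $S^{2}=\mathrm{Id}$ and $S(K)=K$, $S(M)=M$; this is a correct and standard fact for cocommutative color Hopf algebras (your convolution-inverse sketch goes through, again using skew-symmetry of $\phi$), so the argument is sound, but it is slightly heavier than the paper's symmetric pair of one-line normality computations. Also note that your ``right-handed normality'' is not really needed for membership: $S(m_{2})\triangleright k\in K$ already follows from plain normality since $S(m_{2})\in A$; only the explicit formula uses $S^{2}=\mathrm{Id}$.
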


\begin{proof}
Since $MK$ and $KM$ are graded subspaces of $A$, in order to conclude it is sufficient to show that they are the same vector space. Given $k\in K$ and $m\in M$, we have
\[
\begin{split}
k&m=k\epsilon(m_{1})m_{2}=\phi(|k|,|m_{1}|)\epsilon(m_{1})km_{2}=\phi(|k|,|m_{1}\otimes m_{2}|)m_{1}S(m_{2})kS(S(m_{3}))\\&=\phi(|k|,|m|)\phi(|m_{3}|,|k|)m_{1}S(m_{2})kS(S(m_{3}))=
\phi(|k|,|m|)\phi(|S(m_{2})_{2}|,|k|)m_{1}S(m_{2})_{1}kS(S(m_{2})_{2})\in MK,
\end{split}
\]
since $K$ is normal, hence $KM\subseteq MK$. Moreover, we also have
\[
\begin{split}
mk&=m_{1}\epsilon(m_{2})k=\phi(|m_{2}|,|k|)m_{1}k\epsilon(m_{2})=\phi(|m_{2}|,|k|)\phi(|m_{3}|,|k|)m_{1}kS(m_{2})m_{3}
\in KM,
\end{split}
\]
using again that $K$ is normal 
and so the thesis follows.
\end{proof}

\begin{lemma}
    Let $M$ and $K$ be two color Hopf subalgebras of a cocommutative color Hopf algebra $A$, with $K$ normal in $A$. Then the graded vector space $KM$ is a color Hopf subalgebra of $A$.
\end{lemma}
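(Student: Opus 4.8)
The plan is to verify the four defining closure properties of a color Hopf subalgebra for the graded vector space $KM$: that it contains $1_A$, that it is closed under multiplication, that it is a graded subcoalgebra, and that it is closed under the antipode $S_A$. The unit is immediate, since $1_A=1_A1_A$ with $1_A\in K$ and $1_A\in M$, so $1_A\in KM$; and $KM$ is a graded subspace of $A$ because it is spanned by products of homogeneous elements.

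For closure under multiplication I would take two generators $km$ and $k'm'$ of $KM$, with $k,k'\in K$ and $m,m'\in M$, and rewrite $(km)(k'm')=k(mk')m'$. The element $mk'$ lies in $MK$, which by Lemma \ref{KM=MK} equals $KM$, so $mk'$ is a linear combination of elements of the form $\tilde k\tilde m$ with $\tilde k\in K$ and $\tilde m\in M$; then $k(mk')m'$ becomes a linear combination of products $(k\tilde k)(\tilde m m')$, and $k\tilde k\in K$, $\tilde m m'\in M$ because $K$ and $M$ are subalgebras. Hence the product lies in $KM$, so $KM$ is a $G$-graded subalgebra of $A$.

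For the coalgebra structure I would use the compatibility condition $\Delta(ab)=\phi(|a_2|,|b_1|)a_1b_1\otimes a_2b_2$ to compute, for $k\in K$ and $m\in M$,
\[
\Delta_A(km)=\phi(|k_2|,|m_1|)\,k_1m_1\otimes k_2m_2.
\]
Since $K$ and $M$ are graded subcoalgebras, $k_1,k_2\in K$ and $m_1,m_2\in M$, so each tensor factor lies in $KM$ and $\Delta_A(km)\in KM\otimes KM$; moreover $\epsilon_A(km)=\epsilon(k)\epsilon(m)$, so $\epsilon_A$ restricts to $KM$. For the antipode I would use $S(ab)=\phi(|a|,|b|)S(b)S(a)$ to get $S_A(km)=\phi(|k|,|m|)S(m)S(k)$; since $K$ and $M$ are closed under $S_A$, this element lies in $MK=KM$, again by Lemma \ref{KM=MK}.

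Combining these, $KM$ is a $G$-graded subalgebra and subcoalgebra of $A$ closed under the antipode, hence a color Hopf subalgebra of $A$. I do not expect a genuine obstacle here: the whole argument rests on the identity $KM=MK$ established in the previous lemma, which is precisely what allows a product or an antipode to be rearranged back into the form ``$K$-part times $M$-part''; without the normality of $K$ (and hence that identity), $KM$ would in general fail to be a subalgebra.
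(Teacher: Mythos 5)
Your proposal is correct and follows essentially the same route as the paper: the unit is immediate, $\Delta_A(km)=\phi(|k_2|,|m_1|)k_1m_1\otimes k_2m_2$ gives closure under the comultiplication, $S_A(km)=\phi(|k|,|m|)S(m)S(k)\in MK=KM$ handles the antipode, and closure under multiplication comes from rearranging via $MK=KM$ (the paper writes this compactly as $KMKM=KKMM=KM$), all resting on the preceding lemma that uses normality of $K$. No gaps.
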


\begin{proof}
Clearly $KM$ contains $1_{A}$, thus we only have to show that it is closed under $m_{A}$, $\Delta_{A}$ and $S_{A}$. Given $k,k'\in K$ and $m,m'\in M$, since $MK=KM$ in Vec$_{G}$ by Lemma \ref{KM=MK}, we obtain 
\[
kmk'm'\in KMKM=KKMM=KM,\ \ \Delta(km)=\phi(|k_{2}|,|m_{1}|)k_{1}m_{1}\otimes k_{2}m_{2}\in KM\otimes KM
\]
and $S(km)=\phi(|k|,|m|)S(m)S(k)\in MK=KM$. Hence $KM$ is a color Hopf subalgebra of $A$, then automatically cocommutative.
\end{proof}

\begin{proposition}
    Let $M$ and $K$ be two color Hopf subalgebras of a cocommutative color Hopf algebra $A$ with $K$ normal in $A$. Then $KM$ is the supremum of $K$ and $M$ as color Hopf subalgebras of $A$, i.e., $K\vee_{A}M=KM$.
\end{proposition}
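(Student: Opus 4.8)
The plan is to verify directly that $KM$ is the least element, with respect to inclusion, of the poset of color Hopf subalgebras of $A$ containing both $K$ and $M$, and then to invoke \cite[Lemma 2.8]{OlSt} in order to identify this least element with the asymmetric join $K\vee_{A}M$. Most of the substantive work has in fact already been done in the two preceding lemmas, which provide $MK=KM$ in $\mathrm{Vec}_{G}$ and show that $KM$ is a color Hopf subalgebra of $A$; what remains is the universal property.

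First I would note that $KM$ contains both $K$ and $M$: for $k\in K$ one has $k=k1_{A}\in KM$, and for $m\in M$ one has $m=1_{A}m\in KM$, so the inclusions $K\to A$ and $M\to A$ factor through the inclusion $KM\to A$, the latter being a morphism in $\Hc$ by the preceding lemma. Next, if $N$ is any color Hopf subalgebra of $A$ with $K\subseteq N$ and $M\subseteq N$, then, since $N$ is closed under the multiplication of $A$, every generator $km$ of $KM$ (with $k\in K$, $m\in M$) lies in $N$, whence $KM\subseteq N$. Combining these observations, $KM$ is the smallest color Hopf subalgebra of $A$ containing $K$ and $M$; by Remark~\ref{p} the poset of color Hopf subalgebras of $A$ is order-isomorphic to the poset of subobjects of $A$ in $\Hc$, so $KM$ is the supremum of $K$ and $M$ as subobjects of $A$ in $\Hc$.

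It then remains to match this with the notion used in the Zassenhaus Lemma. Since $K$ is normal in $A$, the inclusion $K\to A$ is the kernel of a morphism in $\Hc$ by \cite[Corollary 5.21]{As}, while $M\to A$ is a monomorphism; hence the asymmetric join $K\vee_{A}M$ is defined, and, because $\Hc$ is semi-abelian, \cite[Lemma 2.8]{OlSt} applies and gives that $K\vee_{A}M$ coincides with the supremum of $K$ and $M$ as subobjects of $A$. Therefore $K\vee_{A}M=KM$. No genuine difficulty arises here: the only points requiring a little care are that the order-theoretic supremum among color Hopf subalgebras agrees with the categorical supremum of subobjects in $\Hc$ (immediate from Remark~\ref{p}) and that the hypotheses of \cite[Lemma 2.8]{OlSt} are met—in particular that $K\to A$ is genuinely a kernel, which is precisely what normality of $K$ provides.
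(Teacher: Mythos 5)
Your proof is correct and follows essentially the same route as the paper: the paper's own argument is just the observation that any color Hopf subalgebra containing $K$ and $M$ must contain $KM$ (the previous lemma having established that $KM$ is itself a color Hopf subalgebra), with the identification of the supremum with the asymmetric join $K\vee_{A}M$ left implicit via \cite[Lemma 2.8]{OlSt} as recalled in the preliminaries. Your version simply spells out the remaining details (that $KM$ contains $K$ and $M$, the identification of subobjects with color Hopf subalgebras, and that normality of $K$ makes $K\to A$ a kernel so the asymmetric join is defined), all of which are accurate.
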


\begin{proof}
    If there exists a color Hopf subalgebra $L$ of $A$ containing $K$ and $M$, then it clearly contains also $KM$, because a color Hopf subalgebra is closed under products and sums.
\end{proof}

Hence, by 
applying the categorical results in \cite{OlSt}, we obtain the analogous of \cite[Propositions 4.4 and 4.5]{OlSt} and \cite[Theorem 4.6]{OlSt}. Let us make explicit the Zassenhaus Lemma in the setting of cocommutative color Hopf algebras. Recall that, given a cocommutative color Hopf algebra $A$ and a normal color Hopf subalgebra $B$ of $A$, the cokernel object of the inclusion $i:B\to A$ is given by $A/AB^{+}$. Hence, in view of Theorem \ref{Zassenhaus}, we obtain the following:

\begin{proposition}
Let $U$ and $V$ be two color Hopf subalgebras of a cocommutative color Hopf algebra $A$, $K$ a normal color Hopf subalgebra of $U$ and $L$ a normal color Hopf subalgebra of $V$. Then the following 
\[
\frac{K(U\cap V)}{K(U\cap V)(K(L\cap U))^{+}}\cong\frac{U\cap V}{(U\cap V)((K\cap V)(L\cap U))^{+}}\cong\frac{L(U\cap V)}{L(U\cap V)(L(K\cap V))^{+}}
\]
are isomorphisms in $\Hc$.
\end{proposition}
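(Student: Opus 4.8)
The plan is to read off the statement directly from Theorem~\ref{Zassenhaus}, which applies because $\Hc$ is semi-abelian by \cite[Theorem~6.1]{As}. First I would fit the present data into the hypotheses of that theorem: the inclusions $K\to U$ and $L\to V$ are kernels in $\Hc$ since $K$ is normal in $U$ and $L$ is normal in $V$ (by the characterisation of normal subobjects in \cite[Corollary~5.21]{As}), while $U\to A$ and $V\to A$ are the inclusions, which are monomorphisms by Remark~\ref{p}. Since monomorphisms in $\Hc$ are the injective color Hopf algebra maps, the pullback of $U\to A\leftarrow V$ in $\Hc$ is the intersection color Hopf subalgebra $U\cap V$, and likewise $K\cap V$ and $L\cap U$ are the intersection color Hopf subalgebras of $A$; note that $L\cap U=U\cap L$ and that $K\cap V$, $L\cap U$ are both contained in $U\cap V$.

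Next I would replace each asymmetric join appearing in Theorem~\ref{Zassenhaus} by a product. Using the naturality $\xi_{B}\circ(f\otimes f)=f\circ\xi_{A}$ of \cite[Lemma~5.4]{As} applied to the inclusion $U\cap V\to U$, the restriction of $\xi_{U}$ is $\xi_{U\cap V}$, so normality of $K$ in $U$ forces $K\cap V=K\cap(U\cap V)$ to be normal in $U\cap V$ (this is the only restricted normality needed; all other joins below involve $K$ normal in $U$ or $L$ normal in $V$ directly). Then, combining \cite[Lemma~2.8]{OlSt} (the asymmetric join $K\vee_{W}M$ is the supremum of $K$ and $M$ inside $W$) with Lemma~\ref{KM=MK} and the two subsequent results (which show that $KM$ is a color Hopf subalgebra and equals $K\vee_{W}M$ when $K$ is normal in $W$), I obtain
\[
\begin{gathered}
K\vee_{U}(U\cap V)=K(U\cap V),\qquad K\vee_{U}(U\cap L)=K(L\cap U),\\
(K\cap V)\vee_{U\cap V}(L\cap U)=(K\cap V)(L\cap U),\\
L\vee_{V}(U\cap V)=L(U\cap V),\qquad L\vee_{V}(K\cap V)=L(K\cap V).
\end{gathered}
\]

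Finally I would translate the three quotients: since the cokernel in $\Hc$ of the inclusion of a normal color Hopf subalgebra $B$ of a color Hopf subalgebra $C$ of $A$ has object $C/CB^{+}=C/B^{+}C$, each fraction in Theorem~\ref{Zassenhaus} becomes, after the substitutions above, exactly the corresponding fraction in the statement (and that each denominator is normal in the corresponding numerator is itself part of Theorem~\ref{Zassenhaus}), so the three isomorphisms follow. I expect the only genuine work to be the bookkeeping in the middle step — verifying that every asymmetric join in Theorem~\ref{Zassenhaus} involves a color Hopf subalgebra that is normal in the relevant ambient algebra, so that the earlier lemmas may be invoked, and checking that the superscripts $(\,\cdot\,)^{+}$ in the displayed formula match the cokernel objects $CB^{+}$; once these identifications are made, the result is immediate from Theorem~\ref{Zassenhaus}.
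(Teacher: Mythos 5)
Your proposal is correct and follows essentially the same route as the paper, which also obtains the statement directly from Theorem~\ref{Zassenhaus} together with the preceding lemmas identifying the asymmetric joins $K\vee M$ with the products $KM$ (for $K$ normal) and the cokernel of a normal inclusion $B\to C$ with $C/CB^{+}$. The only detail you make explicit that the paper leaves implicit is the normality of $K\cap V$ in $U\cap V$ needed for the middle join, and your argument for it (restriction of the action $\xi$ plus normality of $K$ in $U$) is sound.
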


\section{Color Hopf crossed modules as internal crossed modules}

In this section we give a characterization of the category $\mathrm{XMod}(\Hc)$ of internal crossed modules in the semi-abelian category $\Hc$ by providing an equivalence with another category, apparently different. In order to do this first we have to understand better the category $\mathrm{Pt}(\Hc)$. Recall also that, by what we said in the preliminary section, the category XMod$(\Hc)$ is equivalent to Grpd$(\Hc)$ which is isomorphic to RMG$(\Hc)$.

\begin{remark}
Recall that, given $A$ in Bimon(Vec$_{G})$, the category $_{A}\mathrm{Vec}_{G}$ is monoidal. The unit object is $\Bbbk$ with left $A$-action defined by $a\cdot k:=\epsilon(a)k$ for all $a\in A$ and $k\in\Bbbk$ and, given $V$ and $W$ in $_{A}\mathrm{Vec}_{G}$, the tensor product $V\otimes W$ has left $A$-action given by $a\cdot(v\otimes w):=\phi(|a_{2}|,|v|)(a_{1}\cdot v)\otimes(a_{2}\cdot w)$, for all $a\in A$, $v\in V$ and $w\in W$. Furthermore, if we consider $A$ in $\Hc$ then the category $_{A}\mathrm{Vec}_{G}$ is symmetric monoidal since Vec$_{G}$ is symmetric (see e.g. \cite[Proposition 6.40]{Ag}). Thus, we can consider the category Hopf($_{A}\mathrm{Vec}_{G}$), whose objects are color Hopf algebras $(H,m,u,\Delta,\epsilon,S)$ which are also graded left $A$-modules 
and such that $m,u,\Delta,\epsilon,S$ are morphisms of left $A$-modules. Hence, denoting by $\cdot:A\otimes H\to H$ the left $A$-action, the following relations are satisfied
\[
a\cdot1_{H}=\epsilon(a)1_{H},\ \ \epsilon(a\cdot h)=\epsilon(a)\epsilon(h), \ \ \Delta(a\cdot h)=\phi(|a_{2}|,|h_{1}|)(a_{1}\cdot h_{1})\otimes(a_{2}\cdot h_{2}), \ \ a\cdot(hh')=\phi(|a_{2}|,|h|)(a_{1}\cdot h)(a_{2}\cdot h')
\]
for every $a\in A$ and $h,h'\in H$. Note that we are considering the trivial braiding on the category $_{A}\mathrm{Vec}_{G}$, given by the braiding of $\mathrm{Vec}_{G}$, since $A$ is cocommutative. We call an object in Hopf($_{A}\mathrm{Vec}_{G}$) an \textit{$A$-module color Hopf algebra}. Note that we can consider $A$ as an $A$-module color Hopf algebra with left $A$-action given by $\triangleright$. In fact, $A$ is in $_{A}\mathrm{Vec}_{G}$ by Lemma \ref{AinAVecG}, clearly $a\triangleright1_{A}=\epsilon(a)1_{A}$ for all $a\in A$ and $\triangleright$ satisfies the second and third equations as said in \eqref{xiofcoalg}. Moreover, also the last equation holds true since $\triangleright$ satisfies \eqref{xiDelta}.  
\end{remark}

The fact that $\Hc$ is a semi-abelian category directly implies an equivalence between internal actions and split extensions of cocommutative color Hopf algebras \cite{BoJa}. As in the case of cocommutative Hopf algebras one can define the category of (cocommutative) module color Hopf algebras.

\begin{definition}
The category of (cocommutative) module color Hopf algebras is defined as follows. Objects are (cocommutative) $A$-module color Hopf algebras, with $A$ an arbitrary cocommutative color Hopf algebra and, given a (cocommutative) $A$-module color Hopf algebra $H$ and a (cocommutative) $B$-module color Hopf algebra $K$ with $A,B$ in $\Hc$, a morphism between them is given by a pair of morphisms of color Hopf algebras $\alpha:A\to B$ and $\beta:H\to K$ such that $\beta(a\cdot h)=\alpha(a)\cdot\beta(h)$, for $a\in A$ and $h\in H$. We denote the category of cocommutative module color Hopf algebras by Act($\Hc$). 
\end{definition}

The category Act($\Hc$) will turn out to be equivalent to the category Pt($\Hc$), so that it will coincide with the category of internal actions in $\Hc$, 
justifying the notation adopted.

Moreover, one can consider semi-direct products (also called smash products) starting from a general braided monoidal category and then, in case of Vec$_{G}$, one recovers the following definition:

\begin{definition}
Given $A$ in $\Hc$ and a (cocommutative) $A$-module color Hopf algebra $H$, the \textit{semi-direct product} of $H$ and $A$, denoted by $H\rtimes A$, is the (cocommutative) color Hopf algebra which is $H\otimes A$ as a $G$-graded vector space, an algebra with unit $1_{H\rtimes A}:=1_{H}\otimes1_{A}$ and multiplication defined by
\[
(h\otimes a)(h'\otimes a'):=\phi(|a_{2}|,|h'|)h(a_{1}\cdot h')\otimes a_{2}a'
\]
for all $h,h'\in H$ and $a,a'\in A$ and a coalgebra with counit and comultiplication defined by 
\[
\epsilon(h\otimes a):=\epsilon(h)\epsilon(a)\ \mathrm{and}\ \Delta(h\otimes a):=\phi(|h_{2}|,|a_{1}|)h_{1}\otimes a_{1}\otimes h_{2}\otimes a_{2}. 
\]
The antipode is given by $S(h\otimes a):=\phi(|h|,|a_{1}|)(S(a_{1})\cdot S(h))\otimes S(a_{2})$.
\end{definition}

The following result is a special case of \cite[Theorem 3.10.4]{Hec}, see also \cite{Bes} and \cite[Theorem 5.1.5]{BesDra} for the original sources. Note also that it is a straightforward generalization of the result proved in \cite{Mol} for the category $\mathrm{Hopf}_{\Bbbk,\mathrm{coc}}$.

\begin{proposition}\label{prop:fiso}
Let  $H$ and $A$ be color Hopf algebras with $A$ cocommutative and
\begin{tikzcd}
	H & A
	\arrow[shift left=1, from=1-1, to=1-2, "p"]
	\arrow[shift left=1, from=1-2, to=1-1,"i"]
\end{tikzcd}
a splitting of morphisms of color Hopf algebras, i.e., $p\circ i=\mathrm{Id}_{A}$. Then, $\mathrm{Hker}(p)$ is an $A$-module color Hopf algebra through
\[
\cdot:A\otimes\mathrm{Hker}(p)\to\mathrm{Hker}(p),\ a\otimes k\mapsto\phi(|a_{2}|,|k|)i(a_{1})ki(S(a_{2})).
\]
Furthermore, there exists an isomorphism of color Hopf algebras
\[
f:\mathrm{Hker}(p)\rtimes A\to H,\ k\otimes a\mapsto ki(a)
\]
with inverse $g:H\to\mathrm{Hker}(p)\rtimes A$, $h\mapsto h_{1}i(p(S(h_{2})))\otimes p(h_{3})$.
\end{proposition}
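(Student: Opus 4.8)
The plan is to reduce the statement to the adjoint action $\triangleright=\xi_H$ together with the Split Short Five Lemma, keeping the computations to a minimum (the result is in any case a special case of \cite[Theorem 3.10.4]{Hec}). Since $p$ is a morphism in $\Hc$, its kernel $\mathrm{Hker}(p)$ is, by \cite[Corollary 5.21]{As}, a \emph{normal} color Hopf subalgebra of $H$, and the restriction of $p$ to $\mathrm{Hker}(p)$ is the zero morphism, i.e.\ $p(k)=\epsilon(k)1_A$ for every $k\in\mathrm{Hker}(p)$. As $i$ is in particular a morphism of coalgebras and of antipodes, the formula defining the action reads $a\cdot k=\xi_H(i(a)\ot k)=i(a)\triangleright k$; normality of $\mathrm{Hker}(p)$ is precisely the statement that $h\triangleright k\in\mathrm{Hker}(p)$ for all $h\in H$ and $k\in\mathrm{Hker}(p)$, so $\cdot$ is well defined with values in $\mathrm{Hker}(p)$.

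Next I would verify the $A$-module color Hopf algebra axioms for $(\mathrm{Hker}(p),\cdot)$. Lemma~\ref{AinAVecG} applied to $H$ in place of $A$, together with $i$ being an algebra morphism, gives $1_A\cdot k=1_H\triangleright k=k$ and $a\cdot(a'\cdot k)=i(a)\triangleright\bigl(i(a')\triangleright k\bigr)=\bigl(i(a)i(a')\bigr)\triangleright k=i(aa')\triangleright k=(aa')\cdot k$, so $(\mathrm{Hker}(p),\cdot)$ lies in ${}_{A}\mathrm{Vec}_{G}$. Specialising to $x=i(a)$, $y=k$, the identities \eqref{xiofcoalg}, \eqref{xiDelta}, \eqref{xiantipode} then yield $\epsilon(a\cdot k)=\epsilon(a)\epsilon(k)$, $\Delta(a\cdot k)=\phi(|a_2|,|k_1|)(a_1\cdot k_1)\ot(a_2\cdot k_2)$, $a\cdot(kk')=\phi(|a_2|,|k|)(a_1\cdot k)(a_2\cdot k')$ and $S(a\cdot k)=a\cdot S(k)$, while $a\cdot1_H=\epsilon(a)1_H$ is immediate. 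Hence $\mathrm{Hker}(p)$ is an $A$-module color Hopf algebra and the semi-direct product $\mathrm{Hker}(p)\rtimes A$ is defined.

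Then I would check that $f\colon\mathrm{Hker}(p)\rtimes A\to H$, $k\ot a\mapsto ki(a)$, is a morphism of color Hopf algebras: $f(1_{\mathrm{Hker}(p)\rtimes A})=1_H$; the identities $\Delta_H\circ f=(f\ot f)\circ\Delta_{\mathrm{Hker}(p)\rtimes A}$ and $\epsilon_H\circ f=\epsilon_{\mathrm{Hker}(p)\rtimes A}$ follow at once, both sides of the first equalling $\phi(|k_2|,|a_1|)\,k_1i(a_1)\ot k_2i(a_2)$ on $k\ot a$ by the compatibility between multiplication and comultiplication in $H$ and the fact that $i$ is a coalgebra map; compatibility with the multiplications reduces, after expanding the twisted product of $\mathrm{Hker}(p)\rtimes A$ and writing $a_1\cdot k'=i(a_1)\triangleright k'$, to the identity $\phi(|a_2|,|k'|)\bigl(i(a_1)\triangleright k'\bigr)i(a_2)=i(a)k'$, which follows from the definition of $\triangleright$, from $i(S(a_2))i(a_3)=i(S(a_2)a_3)=\epsilon(a_2)1_H$ and from the bicharacter identities for $\phi$; compatibility with antipodes is then automatic. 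Moreover, writing $\pi\colon\mathrm{Hker}(p)\rtimes A\to A$, $k\ot a\mapsto\epsilon(k)a$, for the canonical projection of the semi-direct product, with section $\sigma\colon a\mapsto1_H\ot a$, one has $p\circ f=\pi$, $f\circ\sigma=i$, and $f$ restricts to the identity on the kernels of $\pi$ and $p$, both of which are $\mathrm{Hker}(p)$. By the Split Short Five Lemma, valid in the protomodular --- indeed semi-abelian --- category $\Hc$, $f$ is an isomorphism.

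Finally, it remains only to identify $f^{-1}$ with $g$, for which it suffices to verify $f\circ g=\mathrm{Id}_H$. First, $g$ is well defined: the assignment $h\mapsto h_1i(p(S(h_2)))$ is the classical projection of $H$ onto the $p$-coinvariants, and applying $\mathrm{id}\ot p$ to its comultiplication, using $p\circ i=\mathrm{Id}_A$, cocommutativity and the antipode axioms, shows that its image lies in $\mathrm{Hker}(p)$, so $g$ indeed takes values in $\mathrm{Hker}(p)\rtimes A$. Then $f(g(h))=h_1i\bigl(p(S(h_2))p(h_3)\bigr)=h_1i\bigl(p(S(h_2)h_3)\bigr)=h_1i(\epsilon(h_2)1_A)=h$, using that $p$ is an algebra map and $S(h_2)h_3=\epsilon(h_2)1_H$; hence $g=f^{-1}$ and the proof is complete. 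The only delicate point in the whole argument is the bookkeeping of the bicharacter factors $\phi$ --- through the twisted multiplication of $\mathrm{Hker}(p)\rtimes A$ when checking that $f$ is an algebra morphism, and through the iterated coproduct when checking that $h\mapsto h_1i(p(S(h_2)))$ lands in $\mathrm{Hker}(p)$ --- but no genuine difficulty arises there.
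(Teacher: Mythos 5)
Your proof is essentially correct, but only under the extra hypothesis that $H$ itself is cocommutative, i.e.\ that the whole split extension lives in $\Hc$; within that setting it is a genuinely different and shorter route than the paper's. The paper treats Proposition \ref{prop:fiso} as a special case of \cite[Theorem 3.10.4]{Hec} and, behind the scenes, proves it by direct computation: it checks by hand that $a\cdot k\in\mathrm{Hker}(p)$, verifies all the module color Hopf algebra axioms, checks that $f$ is a bialgebra map, and computes \emph{both} composites $f\circ g=\mathrm{Id}_H$ and $g\circ f=\mathrm{Id}$. Your replacements --- well-definedness of $\cdot$ from normality of kernels in $\Hc$ via \cite[Corollary 5.21]{As}, the module axioms from Lemma \ref{AinAVecG} together with \eqref{xiofcoalg}, \eqref{xiDelta}, \eqref{xiantipode} specialised to acting elements $i(a)$, and bijectivity of $f$ from the Split Short Five Lemma so that only $f\circ g=\mathrm{Id}_H$ needs computing --- are all legitimate in $\Hc$ and buy conceptual economy; what you give up is elementarity, since the computational proof needs nothing beyond the Hopf axioms and cocommutativity of $A$, whereas your route leans on protomodularity of $\Hc$ (hence on the standing assumptions on $G$ and $\Bbbk$) and on every object in sight being an object of $\Hc$.

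That last point is the one concrete gap: the statement assumes only $A$ cocommutative. If $H$ is not cocommutative, then (i) $p$ is not a morphism of $\Hc$, so \cite[Corollary 5.21]{As} does not yield normality of $\mathrm{Hker}(p)$; (ii) Lemma \ref{AinAVecG}, \eqref{xiofcoalg} and \eqref{xiantipode} are proved for objects of $\Hc$, and their proofs genuinely use cocommutativity of the acting Hopf algebra (e.g.\ $S^{2}=\mathrm{Id}$ and $\Delta\circ S=(S\otimes S)\circ\Delta$), so applying them with $H$ in place of $A$ is not available; one can salvage the needed identities because you only ever act by elements of the cocommutative subcoalgebra $i(A)\subseteq H$, but that restriction has to be argued separately and amounts to redoing the paper's computations; and (iii) the Split Short Five Lemma is invoked in the category $\Hc$ of cocommutative objects and has no obvious substitute here, so in the stated generality you would have to fall back on verifying $g\circ f=\mathrm{Id}$ directly, as the suppressed proof in the paper does. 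If you intend the result only for $H$ in $\Hc$ (the only case used later in the paper), your argument is complete up to routine bookkeeping, including the small check that $\mathrm{Hker}(\pi)=\mathrm{Hker}(p)\otimes\Bbbk 1_{A}$ which the Split Short Five Lemma step silently requires; otherwise you should either add the hypothesis or replace the three steps above.
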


Note that $a\cdot k=i(a)\triangleright k$ for all $a\in A$ and $k\in\mathrm{Hker}(p)$.

\begin{invisible}
\begin{proof}
First we have to show that $\cdot$ is well-defined and so that, given $a\in A$ and $k\in\mathrm{Hker}(p)$, then $a\cdot k\in\mathrm{Hker}(p)$. So, we compute
\[
\begin{split}
    \Delta(a\cdot k)&=\Delta(\phi(|a_{2}|,|k|)i(a_{1})ki(S(a_{2})))\\&=\phi(|a_{4}|,|k|)\phi(|a_{3}|,|k|)\phi(|a_{2}|,|k_{1}|)(i(a_{1})k_{1}\otimes i(a_{2})k_{2})(i(S(a_{3}))\otimes i(S(a_{4})))\\&=\phi(|a_{4}|,|k|)\phi(|a_{3}|,|k|)\phi(|a_{2}|,|k_{1}|)\phi(|a_{2}k_{2}|,|a_{3}|)i(a_{1})k_{1}i(S(a_{3}))\otimes i(a_{2})k_{2}i(S(a_{4}))\\&=
\phi(|a_{4}|,|k|)\phi(|a_{3}|,|k_{1}|)\phi(|a_{2}|,|k_{1}|)i(a_{1})k_{1}i(S(a_{2}))\otimes i(a_{3})k_{2}i(S(a_{4}))
\end{split}
\]
and then
\[
\begin{split}  
    (\mathrm{Id}\otimes p)\Delta(a\cdot k)&=\phi(|a_{4}|,|k|)\phi(|a_{3}|,|k_{1}|)\phi(|a_{2}|,|k_{1}|)i(a_{1})k_{1}i(S(a_{2}))\otimes p(i(a_{3}))p(k_{2})p(i(S(a_{4})))\\&=\phi(|a_{4}|,|k|)\phi(|a_{3}|,|k_{1}|)\phi(|a_{2}|,|k_{1}|)i(a_{1})k_{1}i(S(a_{2}))\otimes a_{3}\epsilon(k_{2})S(a_{4})\\&=\phi(|a_{4}|,|k|)\phi(|a_{3}|,|k|)\phi(|a_{2}|,|k|)i(a_{1})ki(S(a_{2}))\otimes a_{3}S(a_{4})\\&=\phi(|a_{3}|,|k|)\phi(|a_{2}|,|k|)i(a_{1})ki(S(a_{2}))\otimes\epsilon(a_{3})1_{A}\\&=\phi(|a_{2}|,|k|)i(a_{1})ki(S(a_{2}))\otimes1_{A}=(a\cdot k)\otimes1_{A}
\end{split}
\]
where the second equality holds true since $k\in\mathrm{Hker}(p)$ and $p\circ i=\mathrm{Id}_{A}$. Hence $a\cdot k\in\mathrm{Hker}(p)$ and $\cdot$ is well-defined.
Furthermore, $\cdot$ is an $A$-action, indeed $1_{A}\cdot k=\phi(|1_{A}|,|k|)i(1_{A})ki(S(1_{A}))=k$ and 
\[
\begin{split}
a'\cdot(a\cdot k)&=a'\cdot(\phi(|a_{2}|,|k|)i(a_{1})ki(S(a_{2})))\\&=\phi(|a_{2}|,|k|)\phi(|a'_{2}|,|a_{1}ka_{2}|)i(a'_{1})i(a_{1})ki(S(a_{2}))i(S(a'_{2}))\\&=\phi(|a_{2}|,|k|)\phi(|a'_{2}|,|a_{1}ka_{2}|)i(a'_{1}a_{1})ki(S(a_{2})S(a'_{2}))\\&=\phi(|a_{2}|,|k|)\phi(|a'_{2}|,|a_{1}k|)i(a'_{1}a_{1})ki(S(a'_{2}a_{2}))\\&=\phi(|a'_{2}a_{2}|,|k|)\phi(|a'_{2}|,|a_{1}|)i(a'_{1}a_{1})ki(S(a'_{2}a_{2}))\\&=\phi(|(a'a)_{2}|,|k|)i((a'a)_{1})ki(S((a'a)_{2}))=(a'a)\cdot k.
\end{split}
\]
Moreover, we have 
\[
a\cdot1_{H}=\phi(|a_{2}|,|1_{H}|)i(a_{1})1_{H}i(S(a_{2}))=i(a_{1})i(S(a_{2}))=i(a_{1}S(a_{2}))=\epsilon(a)1_{H}
\]
and
\[
\begin{split}
\epsilon(a\cdot k)&=\epsilon(\phi(|a_{2}|,|k|)i(a_{1})ki(S(a_{2})))=\phi(|a_{2}|,|k|)\epsilon(i(a_{1}))\epsilon(k)\epsilon(i(S(a_{2})))\\&=\phi(|a_{2}|,|k|)\epsilon(a_{1})\epsilon(k)\epsilon(a_{2})=\epsilon(a_{1})\epsilon(a_{2})\epsilon(k)=\epsilon(a)\epsilon(k).
\end{split}
\]
Furthermore
\[
\begin{split}
    \phi(|a_{2}|,|k_{1}|)(a_{1}\cdot k_{1})\otimes(a_{2}\cdot k_{2})&=\phi(|a_{4}|,|k_{1}|)\phi(|a_{3}|,|k_{1}|)\phi(|a_{2}|,|k_{1}|)\phi(|a_{4}|,|k_{2}|)i(a_{1})k_{1}i(S(a_{2}))\otimes i(a_{3})k_{2}i(S(a_{4}))\\&=\phi(|a_{4}|,|k|)\phi(|a_{3}|,|k_{1}|)\phi(|a_{2}|,|k_{1}|)i(a_{1})k_{1}i(S(a_{2}))\otimes i(a_{3})k_{2}i(S(a_{4}))\\&=\Delta(a\cdot k)
\end{split}
\]
and, similarly, 
\[
\begin{split}
    \phi(|a_{2}|,|k|)(a_{1}\cdot k)(a_{2}\cdot k')&=\phi(|a_{4}|,|k|)\phi(|a_{3}|,|k|)\phi(|a_{2}|,|k|)i(a_{1})ki(S(a_{2}))i(a_{3})k'i(S(a_{4}))\\&=\phi(|a_{3}|,|k|)\phi(|a_{2}|,|k|)i(a_{1})k\epsilon(a_{2})k'i(S(a_{3}))\\&=\phi(|a_{2}|,|k|)\phi(|a_{2}|,|k'|)i(a_{1})kk'i(S(a_{2}))=\phi(|a_{2}|,|kk'|)i(a_{1})kk'i(S(a_{2}))\\&=a\cdot(kk').
\end{split}
\]
Thus, $\mathrm{Hker}(p)$ is a cocommutative $A$-module color Hopf algebra and we can consider $\mathrm{Hker}(p)\rtimes A$. Define $f:\mathrm{Hker}(p)\rtimes A\to H,\ k\otimes a\mapsto ki(a)$. This is a morphism of coalgebras, indeed
\[
\begin{split}
(f\otimes f)\Delta(k\otimes a)&=(f\otimes f)(\phi(|k_{2}|,|a_{1}|)k_{1}\otimes a_{1}\otimes k_{2}\otimes a_{2})=\phi(|k_{2}|,|a_{1}|)k_{1}i(a_{1})\otimes k_{2}i(a_{2})=\Delta(ki(a))=\Delta f(a)
\end{split}
\]
and $\epsilon f(k\otimes a)=\epsilon(ki(a))=\epsilon(k)\epsilon(a)$. Moreover, $f$ is a morphism of algebras since
\[
\begin{split}
f((k\otimes a)(k'\otimes a'))&=f(\phi(|a_{2}|,|k'|)k(a_{1}\cdot k')\otimes a_{2}a')=\phi(|a_{2}|,|k'|)k(a_{1}\cdot k')i(a_{2}a')\\&=\phi(|a_{3}|,|k'|)\phi(|a_{2}|,|k'|)ki(a_{1})k'i(S(a_{2}))i(a_{3}a')\\&=\phi(|a_{3}|,|k'|)\phi(|a_{2}|,|k'|)ki(a_{1})k'i(S(a_{2})a_{3}a')\\&=\phi(|a_{2}|,|k'|)ki(a_{1})k'i(\epsilon(a_{2})a')=ki(a_{1}\epsilon(a_{2}))k'i(a')\\&=f(k\otimes a)f(k'\otimes a').
\end{split}
\]
and $f(1_{H}\otimes1_{A})=1_{H}$, thus $f$ is a morphism of color Hopf algebras. In addition, we can define $g:H\to\mathrm{Hker}(p)\rtimes A$, $h\mapsto h_{1}i(p(S(h_{2})))\otimes p(h_{3})$, which is well-defined since
\[
\begin{split}
    (\mathrm{Id}\otimes p)\Delta(h_{1}i(p(S(h_{2}))))&=(\mathrm{Id}\otimes p)(\phi(|h_{2}|,|h_{3}|)h_{1}i(p(S(h_{3})))\otimes h_{2}i(p(S(h_{4}))))\\&=h_{1}i(p(S(h_{2})))\otimes p(h_{3})p(i(p(S(h_{4}))))\\&=h_{1}i(p(S(h_{2})))\otimes p(h_{3}S(h_{4}))\\&=h_{1}i(p(S(h_{2})))\otimes1_{A}
\end{split}
\]
and it is the inverse of $f$, indeed 
\[
fg(h)=f(h_{1}i(p(S(h_{2})))\otimes p(h_{3}))=h_{1}i(p(S(h_{2})))i(p(h_{3}))=h_{1}i(p(S(h_{2})h_{3}))=h_{1}\epsilon(h_{2})i(p(1_{H}))=h
\]
and
\[
\begin{split}
gf(k\otimes a)&=g(ki(a))=\phi(|k_{3}|,|a_{1}|)\phi(|k_{3}|,|a_{2}|)\phi(|k_{2}|,|a_{1}|)k_{1}i(a_{1})i(p(S(k_{2}i(a_{2}))))\otimes p(k_{3}i(a_{3}))\\&=
\phi(|k_{3}|,|a_{1}|)\phi(|k_{3}|,|a_{2}|)\phi(|k_{2}|,|a_{1}|)k_{1}i(a_{1})i(S(p(k_{2})p(i(a_{2}))))\otimes\epsilon(k_{3})p(i(a_{3}))\\&=\phi(|k_{3}|,|a_{1}|)\phi(|k_{2}|,|a_{1}|)k_{1}i(a_{1})i(S(p(k_{2}\epsilon(k_{3}))a_{2}))\otimes a_{3}\\&=\phi(|k_{2}|,|a_{1}|)k_{1}i(a_{1})i(S(p(k_{2})a_{2}))\otimes a_{3}=\phi(|k_{2}|,|a_{1}|)k_{1}i(a_{1})i(S(\epsilon(k_{2})a_{2}))\otimes a_{3}\\&=k_{1}\epsilon(k_{2})i(a_{1})i(S(a_{2}))\otimes a_{3}=ki(a_{1}S(a_{2}))\otimes a_{3}=k\epsilon(a_{1})\otimes a_{2}\\&=k\otimes a
\end{split}
\]
and then $f$ is an isomorphism of color Hopf algebras.
\end{proof}
\end{invisible}

The category Act$(\Hc)$ turns out to be equivalent to $\mathrm{Pt}(\Hc)$. This result can be deduced from \cite[Proposition 1.5]{Bohm}. However we include here a direct proof for the reader's sake.

\begin{proposition}
    There is an equivalence of categories between $\mathrm{Act}(\Hc)$ and $\mathrm{Pt}(\Hc)$.
\end{proposition}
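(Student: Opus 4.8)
The plan is to construct a pair of functors $\Phi:\mathrm{Act}(\Hc)\to\mathrm{Pt}(\Hc)$ and $\Psi:\mathrm{Pt}(\Hc)\to\mathrm{Act}(\Hc)$ and prove that they are mutually quasi-inverse. The functor $\Phi$ should send a cocommutative $A$-module color Hopf algebra $H$ to the point
\begin{tikzcd}
	H\rtimes A & A
	\arrow[shift left=1, from=1-1, to=1-2, "p_{A}"]
	\arrow[shift left=1, from=1-2, to=1-1,"i_{A}"]
\end{tikzcd}
where $i_{A}:A\to H\rtimes A$, $a\mapsto 1_{H}\otimes a$, and $p_{A}:H\rtimes A\to A$, $h\otimes a\mapsto\epsilon(h)a$; one checks directly from the formulas in the definition of $H\rtimes A$ that $i_{A},p_{A}$ are morphisms of color Hopf algebras and $p_{A}\circ i_{A}=\mathrm{Id}_{A}$. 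On a morphism $(\alpha:A\to B,\beta:H\to K)$ in $\mathrm{Act}(\Hc)$ one sets $\Phi(\alpha,\beta):=(\beta\otimes\alpha:H\rtimes A\to K\rtimes B,\ \alpha)$, and the compatibility $\beta(a\cdot h)=\alpha(a)\cdot\beta(h)$ is exactly what is needed for $\beta\otimes\alpha$ to respect the twisted multiplication of the semi-direct products. In the other direction, $\Psi$ sends a point
\begin{tikzcd}
	H & A
	\arrow[shift left=1, from=1-1, to=1-2, "p"]
	\arrow[shift left=1, from=1-2, to=1-1,"i"]
\end{tikzcd}
to the $A$-module color Hopf algebra $\mathrm{Hker}(p)$ with the action $a\cdot k=\phi(|a_{2}|,|k|)i(a_{1})ki(S(a_{2}))$ described in Proposition \ref{prop:fiso}; on a morphism of points $(u:H\to K,\ v:A\to B)$ one restricts $u$ to kernels, obtaining $\overline{u}:\mathrm{Hker}(p)\to\mathrm{Hker}(q)$, and sets $\Psi(u,v):=(v,\overline{u})$, with $\overline{u}(a\cdot k)=v(a)\cdot\overline{u}(k)$ following from $u\circ i=i'\circ v$ and $u$ being a morphism of color Hopf algebras.

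The two natural isomorphisms $\Phi\Psi\cong\mathrm{Id}_{\mathrm{Pt}(\Hc)}$ and $\Psi\Phi\cong\mathrm{Id}_{\mathrm{Act}(\Hc)}$ are then supplied componentwise. For a point $(H,A,p,i)$, Proposition \ref{prop:fiso} already gives an isomorphism of color Hopf algebras $f:\mathrm{Hker}(p)\rtimes A\to H$, $k\otimes a\mapsto ki(a)$, and one checks that $f$ together with $\mathrm{Id}_{A}$ is a morphism of points, i.e.\ $p\circ f=p_{A}$ and $f\circ i_{A}=i$, which is immediate from the formulas; naturality in $(u,v)$ amounts to the square relating $f$ for $H$ and for $K$ commuting, which follows from $u\circ i=i'\circ v$ and $u|_{\mathrm{Hker}(p)}=\overline{u}$. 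For an $A$-module color Hopf algebra $H$, applying $\Psi$ to $\Phi(H)$ yields $\mathrm{Hker}(p_{A})$ inside $H\rtimes A$; one identifies $\mathrm{Hker}(p_{A})$ with $H\otimes 1_{A}\cong H$ via $h\mapsto h\otimes 1_{A}$ (using that $p_{A}=\epsilon_{H}\otimes\mathrm{Id}_{A}$ and the description of kernels as $\mathrm{Hker}$ from \cite{As}), and checks that under this identification the induced action $a\cdot k=\phi(|a_{2}|,|k|)i_{A}(a_{1})ki_{A}(S(a_{2}))$ computed in $H\rtimes A$ recovers the original action $a\cdot h$ of $H$; this is a short computation with the twisted multiplication. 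Again naturality is routine.

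The main obstacle will be keeping the $\phi$-twists and the $G$-grading bookkeeping under control throughout, in particular verifying that $i_{A}$ and $p_{A}$ are genuinely color Hopf algebra morphisms (compatibility with $\Delta$ and $S$ of $H\rtimes A$ involves the braiding) and that the identification $\mathrm{Hker}(p_{A})\cong H$ intertwines the two actions with the correct bicharacter factors; none of this is conceptually hard, but the computations are of the same flavour as those in the (invisible) proof of Proposition \ref{prop:fiso}. A secondary point worth spelling out is that every split epimorphism in $\Hc$ arises, up to isomorphism of points, from a semi-direct product — but this is precisely the content of the isomorphism $f$ from Proposition \ref{prop:fiso}, so no extra work is needed there. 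I would also remark, as the statement preceding the proposition does, that this equivalence is an instance of the general semi-abelian equivalence between internal actions and points \cite{BoJa}, which both certifies the result abstractly and tells us that $\mathrm{Act}(\Hc)$ deserves to be regarded as the category of internal actions in $\Hc$.
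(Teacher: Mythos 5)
Your proposal follows essentially the same route as the paper: the paper's functors $G$ and $F$ are exactly your $\Phi$ (semi-direct product with the projection/inclusion point) and $\Psi$ (kernel of the split epimorphism with the conjugation action), and both quasi-inverse checks rest on the isomorphism $f:\mathrm{Hker}(p)\rtimes A\to H$ of Proposition \ref{prop:fiso} together with the identification $\mathrm{Hker}(p_{2})\cong H\otimes\Bbbk 1_{A}\cong H$. The construction and verifications you outline (including the deferred $\phi$-twist computations) are correct and match the paper's argument.
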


\begin{proof}
Define a functor 
\[
F:\mathrm{Pt}(\Hc)\to\mathrm{Act}(\Hc)
\]
as 
\[
\begin{tikzcd}
	H && A \\
	H' && A'
	\arrow[shift left=1, from=1-1, to=1-3, "p"]
	\arrow[shift left=1, from=2-1, to=2-3, "p'"]
	\arrow[shift left=1, from=1-3, to=1-1, "i"]
	\arrow[shift left=1, from=2-3, to=2-1, "i'"]
	\arrow[from=1-1, to=2-1, "f"']
	\arrow[from=1-3, to=2-3, "g"]
\end{tikzcd}\longmapsto
\begin{tikzcd}
	A\otimes\mathrm{Hker}(p) && \mathrm{Hker}(p) \\
	A'\otimes\mathrm{Hker}(p') && \mathrm{Hker}(p')
	\arrow[from=1-1, to=1-3,"\cdot"]
	\arrow[from=1-1, to=2-1, "g\otimes\bar{f}"']
	\arrow[from=1-3, to=2-3, "\bar{f}"]
	\arrow[from=2-1, to=2-3, "\cdot"]
\end{tikzcd}
\]
where the morphism $\cdot$ is defined as in Proposition \ref{prop:fiso} and $\bar{f}$ is induced by the universal property of kernel. Indeed, called $j:\mathrm{Hker}(p)\to H$ the inclusion given by the kernel of $p$, we can compute
\[
p'\circ f\circ j=g\circ p\circ j=g\circ u_{A}\circ\epsilon_{H}\circ j=u_{A'}\circ\epsilon_{H}\circ j=u_{A'}\circ\epsilon_{H'}\circ f\circ j,
\]
so that there exists a unique $\bar{f}:\mathrm{Hker}(p)\to\mathrm{Hker}(p')$ such that $f\circ j=j'\circ\bar{f}$, where $j':\mathrm{Hker}(p')\to H'$ is the inclusion. Clearly $\bar{f}$ is simply the restriction of $f$ to $\mathrm{Hker}(p)$. The diagram on the right effectively commute. Indeed, for all $a\in A$ and $x\in\mathrm{Hker}(p)$, we have 
\[
\begin{split}
g(a)\cdot f(x)&=\phi(|g(a_{2})|,|f(x)|)i'(g(a_{1}))f(x)i'(S(g(a_{2})))=\phi(|a_{2}|,|x|)f(i(a_{1}))f(x)f(i(S(a_{2})))\\&=f(\phi(|a_{2}|,|x|)i(a_{1})xi(S(a_{2})))=f(a\cdot x)
\end{split}
\]
and so $F$ is well-defined. Moreover, define a functor 
\[
G:\mathrm{Act}(\Hc)\to\mathrm{Pt}(\Hc)
\]
as
\[
\begin{tikzcd}
	A\otimes H && H \\
	B\otimes K && K
	\arrow[from=1-1, to=1-3,"\cdot"]
	\arrow[from=1-1, to=2-1, "\alpha\otimes\beta"']
	\arrow[from=1-3, to=2-3, "\beta"]
	\arrow[from=2-1, to=2-3, "\cdot"]
\end{tikzcd}\longmapsto
\begin{tikzcd}
	H\rtimes A && A \\
	K\rtimes B && B
	\arrow[shift left=1, from=1-1, to=1-3, "p_{2}"]
	\arrow[shift left=1, from=2-1, to=2-3, "p_{2}"]
	\arrow[shift left=1, from=1-3, to=1-1, "\iota_2"]
	\arrow[shift left=1, from=2-3, to=2-1, "\iota_2"]
	\arrow[from=1-1, to=2-1, "\beta\otimes\alpha"']
	\arrow[from=1-3, to=2-3, "\alpha"]
\end{tikzcd}
\]
where $p_{2}(h\otimes a)=\epsilon_{H}(h)a$ and $\iota_2(a)=1_{H}\otimes a$ for all $h\in H$ and $a\in A$. Hence $p_{2}\circ\iota_2=\mathrm{Id}_{A}$ and the following relations 
\[
p_{2}(\beta\otimes\alpha)(h\otimes a)=\epsilon_{K}(\beta(h))\alpha(a)=\alpha(\epsilon_{H}(h)a),\ \ \ 1_{K}\otimes\alpha(a)=\beta(1_{H})\otimes\alpha(a)
\]
are satisfied, so that $G$ is well-defined. Hence we have  
\[
\begin{tikzcd}
	(H & A
	\arrow[shift left=1, from=1-1, to=1-2, "p"]
	\arrow[shift left=1, from=1-2, to=1-1, "i"])
\end{tikzcd}\overset{F}\longmapsto
(A\otimes\mathrm{Hker}(p)\overset{\cdot}\longrightarrow\mathrm{Hker}(p))\overset{G}\longmapsto
\begin{tikzcd}
	(\mathrm{Hker}(p)\rtimes A & A
	\arrow[shift left=1, from=1-1, to=1-2, "p_{2}"]
	\arrow[shift left=1, from=1-2, to=1-1, "\iota_2"])
\end{tikzcd}
\]
and we know, by Proposition \ref{prop:fiso}, that $f:\mathrm{Hker}(p)\rtimes A\to H$, $k\otimes a\mapsto ki(a)$ is an isomorphism of color Hopf algebras, so that $(f,\mathrm{Id}_{A})$ is an isomorphism of split epimorphisms, since clearly $f\circ\iota_2=i$ and $p\circ f=p_{2}$ as $p(k)=\epsilon(k)1_{A}$ for $k\in\mathrm{Hker}(p)$. Furthermore, we also have  
\[
(A\otimes H\overset{\cdot}\longrightarrow H)\overset{G}\longmapsto
\begin{tikzcd}
	(H\rtimes A & A
	\arrow[shift left=1, from=1-1, to=1-2, "p_{2}"]
	\arrow[shift left=1, from=1-2, to=1-1, "\iota_2"])
\end{tikzcd}\overset{F}\longmapsto
(A\otimes\mathrm{Hker}(p_{2})\longrightarrow\mathrm{Hker}(p_{2}))
\]
and
\[
\begin{split}
    (\mathrm{Id}\otimes p_{2})\Delta(h\otimes a)&=(\mathrm{Id}\otimes p_{2})(\phi(|h_{2}|,|a_{1}|)h_{1}\otimes a_{1}\otimes h_{2}\otimes a_{2})=\phi(|h_{2}|,|a_{1}|)h_{1}\otimes a_{1}\otimes\epsilon(h_{2})a_{2}\\&=h_{1}\epsilon(h_{2})\otimes a_{1}\otimes a_{2}=h\otimes a_{1}\otimes a_{2},
\end{split}
\]
so that $\mathrm{Hker}(p_{2})$ is given by elements in $H\otimes A^{\mathrm{co}A}=H\otimes\Bbbk1_{A}\cong H$. Therefore, $F$ and $G$ form an equivalence of categories.
\end{proof}

Generalizing \cite[Definition 2.1]{Majid} which is given for (cocommutative) Hopf algebras, we give the following definition for cocommutative color Hopf algebras.

\begin{definition}\label{def:colorHopfcrossed}
    A \textit{color Hopf crossed module} is a triple $(A,H,d)$ where $A$ is a cocommutative color Hopf algebra, $H$ is a cocommutative $A$-module color Hopf algebra and $d:H\to A$ is a morphism of color Hopf algebras such that
\begin{equation}\label{cm1}
    d(a\cdot h)=a\triangleright d(h)=\phi(|a_{2}|,|h|)a_{1}d(h)S(a_{2})\qquad \text{for all}\ h\in H\ \text{and}\ a\in A,
\end{equation}
\begin{equation}\label{cm2}
    d(g)\cdot h=g\triangleright h=\phi(|g_{2}|,|h|)g_{1}hS(g_{2})\qquad \text{for all}\ h,g\in H.
\end{equation}

A morphism $(\alpha,\beta):(A,H,d)\to(A',H',d')$ of color Hopf crossed modules is given by a pair of color Hopf algebra morphisms $\alpha:H\to H'$ and $\beta:A\to A'$ such that $d'\circ\alpha=\beta\circ d$ and $\alpha(a\cdot h)=\beta(a)\cdot\alpha(h)$, for all $a\in A$ and $h\in H$. Denote this category by HXMod($\Hc$).
\end{definition}

Now we show that HXMod($\Hc$) is equivalent to RMG$(\Hc)$. In order to do this we need to know how pullbacks in the category $\Hc$ are made. \\

\noindent\textbf{Pullbacks in $\Hc$}. From the construction of binary products and equalizers in $\Hc$ given in \cite{As} we can easily derive pullbacks in $\Hc$. Let $A,B,C$ be cocommutative color Hopf algebras and $f:A\to C$, $g:B\to C$ be morphisms of color Hopf algebras. The pullback object of $A$ and $B$ over $C$ is given by $\mathrm{Eq}(f\circ\pi_{A},g\circ\pi_{B})$, where $\pi_{A}:=r_{A}\circ(\mathrm{Id}_{A}\otimes\epsilon_{B})$ and $\pi_{B}:=l_{B}\circ(\epsilon_{A}\otimes\mathrm{Id}_{B})$ are the projections of $A\otimes B$, which is the binary product of $A$ and $B$ in $\Hc$. So $A\times_{C}B$ is given by elements $a\otimes b\in A\otimes B$ such that
$$(\mathrm{Id}_{A\otimes B}\otimes(fr_{A}(\mathrm{Id}_{A}\otimes\epsilon_{B})))\Delta_{A\otimes B}(a\otimes b)=(\mathrm{Id}_{A\otimes B}\otimes(gl_{B}(\epsilon_{A}\otimes\mathrm{Id}_{B})))\Delta_{A\otimes B}(a\otimes b).$$ The first member is
\[
(\mathrm{Id}_{A\otimes B}\otimes(fr_{A}(\mathrm{Id}_{A}\otimes\epsilon_{B})))(\phi(|a_{2}|,|b_{1}|)a_{1}\otimes b_{1}\otimes a_{2}\otimes b_{2})=\phi(|a_{2}|,|b_{1}|)a_{1}\otimes b_{1}\otimes f(a_{2}\epsilon(b_{2}))=\phi(|a_{2}|,|b|)a_{1}\otimes b\otimes f(a_{2})
\]
and the second is
\[
(\mathrm{Id}_{A\otimes B}\otimes(gl_{B}(\epsilon_{A}\otimes\mathrm{Id}_{B})))(\phi(|a_{2}|,|b_{1}|)a_{1}\otimes b_{1}\otimes a_{2}\otimes b_{2})=\phi(|a_{2}|,|b_{1}|)a_{1}\otimes b_{1}\otimes g(\epsilon(a_{2})b_{2})=a\otimes b_{1}\otimes g(b_{2}).
\]
Hence $A\times_{C}B$ is given by $a\otimes b\in A\otimes B$ such that $\phi(|a_{2}|,|b|)a_{1}\otimes b\otimes f(a_{2})=a\otimes b_{1}\otimes g(b_{2})$
or, equivalently by applying $\mathrm{Id}_{A}\otimes c_{B,C}$, such that $a_{1}\otimes f(a_{2})\otimes b=
\phi(|b_{1}|,|b_{2}|)a\otimes g(b_{2})\otimes b_{1}=a\otimes g(b_{1})\otimes b_{2}$, where the last equality follows by cocommutativity of $B$.
Therefore, we obtain
\[
A\times_{C}B=\{a\otimes b\in A\otimes B\ |\ a_{1}\otimes f(a_{2})\otimes b=a\otimes g(b_{1})\otimes b_{2}\}.
\]
\begin{invisible}
It is automatically a color Hopf subalgebra of $A\otimes B$ and, considered $\pi_{A}$ and $\pi_{B}$ of before, its elements satisfy $f\circ\pi_{A}=g\circ\pi_{B}$. Let us show explicitly the universal property. Given a cocommutative color Hopf algebra $H$ and $\gamma:H\to B$ and $\phi:H\to A$ two morphisms in $\Hc$ such that $g\circ\gamma=f\circ\phi$, using that $(A\otimes B,\pi_{A},\pi_{B})$ is the binary product of $A$ and $B$ in $\Hc$, there exists a unique morphism $F:=(\phi\otimes\gamma)\circ\Delta_{H}:H\to A\otimes B$ in $\Hc$ such that $\pi_{B}\circ F=\gamma$ and $\pi_{A}\circ F=\phi$. Thus, we only have to prove that $F(h)\in A\times_{C}B$ for all $h\in H$ and this is true since $\phi(h_{1})\otimes f(\phi(h_{2}))\otimes\gamma(h_{3})=
\phi(h_{1})\otimes g(\gamma(h_{2}))\otimes\gamma(h_{3})$.
\end{invisible}
Thus $(A\times_{C}B,\pi_{A},\pi_{B})$ is the pullback of the pair $(f,g)$ in $\Hc$. \medskip

Let us also recall that, since $\Hc$ is action representable by \cite[Proposition 6.3]{As} and so it satisfies the 
condition (SH), Proposition \ref{RMGGrpd} gives us the following result for a reflexive graph in $\Hc$.

\begin{lemma}\label{lemmarefgraph}
    Given a reflexive graph 
\begin{equation}\label{refgrapHc}
\begin{tikzcd}
	A_{1} && A_{0}
	\arrow[shift left=3, from=1-1, to=1-3, "p"]
	\arrow[shift right=3, from=1-1, to=1-3, "\gamma"']
	\arrow["i"{description}, from=1-3, to=1-1]
\end{tikzcd}
\end{equation}
in $\Hc$, the following conditions are equivalent for \eqref{refgrapHc}:
\begin{enumerate}
    \item it is a reflexive-multiplicative graph; \medskip
    \item it is an internal groupoid; \medskip
    \item it satisfies $[\mathrm{Hker}(p),\mathrm{Hker}(\gamma)]_{\mathrm{Huq}}=0$, i.e., $xy=\phi(|x|,|y|)yx$ for all $x\in\mathrm{Hker}(p)$ and $y\in\mathrm{Hker}(\gamma)$.
\end{enumerate}
\end{lemma}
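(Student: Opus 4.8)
The plan is to derive the lemma directly from Proposition \ref{RMGGrpd} together with Lemma \ref{l1}, since all the conceptual work has already been done in the preliminaries. First I would check that $\Hc$ satisfies the hypotheses of Proposition \ref{RMGGrpd}: it is pointed, with zero object $\Bbbk$; being semi-abelian by \cite[Theorem 6.1]{As} it is in particular a Mal'tsev category; and, being action representable by \cite[Proposition 6.3]{As}, it is action accessible, hence satisfies the condition (SH) by \cite{BouJan2}. Therefore Proposition \ref{RMGGrpd} applies to the reflexive graph \eqref{refgrapHc} and tells us that ``reflexive-multiplicative graph'', ``internal groupoid'' and the equality $[\mathrm{Ker}(p),\mathrm{Ker}(\gamma)]_{\mathrm{Huq}}=\mathbf{0}$ are all equivalent; this already yields the equivalence of items (1) and (2), and reduces (3) to unravelling the condition $[\mathrm{Ker}(p),\mathrm{Ker}(\gamma)]_{\mathrm{Huq}}=\mathbf{0}$.

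Next I would translate that condition into the stated explicit form. Since $\Hc$ is semi-abelian, kernels are computed as Hopf kernels, so $\mathrm{Ker}(p)=\mathrm{Hker}(p)$ and $\mathrm{Ker}(\gamma)=\mathrm{Hker}(\gamma)$, regarded as normal color Hopf subalgebras of $A_{1}$, and $\mathbf{0}=\Bbbk$. The Huq commutator $[\mathrm{Hker}(p),\mathrm{Hker}(\gamma)]_{\mathrm{Huq}}$ is the zero object precisely when the cokernel of its inclusion into $A_{1}$ is the identity, that is, when $\mathrm{Hker}(p)$ and $\mathrm{Hker}(\gamma)$ already commute in the sense of Huq inside $A_{1}$. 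Applying Lemma \ref{l1} with $A=A_{1}$, $X=\mathrm{Hker}(p)$ and $Y=\mathrm{Hker}(\gamma)$, the equivalence of its conditions (1) and (2) shows that this holds if and only if $xy=\phi(|x|,|y|)yx$ for all $x\in\mathrm{Hker}(p)$ and $y\in\mathrm{Hker}(\gamma)$, which is exactly condition (3).

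Assembling these observations gives the chain (1) $\Leftrightarrow$ (2) $\Leftrightarrow$ (3); that the statement records only these three items, omitting the intermediate ``internal category'' clause of Proposition \ref{RMGGrpd}, is immaterial. The argument is essentially bookkeeping: the only point that warrants a line of justification is that each hypothesis of Proposition \ref{RMGGrpd}—pointedness, the Mal'tsev property and (SH)—is genuinely available for $\Hc$, and all three follow from \cite{As} and the discussion in the preliminaries, so no real obstacle arises.
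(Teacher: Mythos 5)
Your proposal is correct and follows essentially the same route as the paper: the lemma there is presented precisely as the instance of Proposition \ref{RMGGrpd} for $\Hc$, justified by the observation that $\Hc$ is semi-abelian (hence pointed and Mal'tsev) and action representable by \cite[Proposition 6.3]{As}, hence satisfies condition (SH), with the explicit form of condition (3) coming from Lemma \ref{l1} exactly as you describe. No gaps; your unravelling of $[\mathrm{Hker}(p),\mathrm{Hker}(\gamma)]_{\mathrm{Huq}}=0$ as the two Hopf kernels commuting in the sense of Huq, and then as $xy=\phi(|x|,|y|)yx$, matches the paper's treatment of the Huq commutator in Section 3.
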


Finally we can show the foretold equivalence between the category $\mathrm{HXMod}(\Hc)$ and the category $\mathrm{RMG}(\Hc)$. This result will allow us to obtain the equivalence between $\mathrm{HXMod}(\Hc)$ and $\mathrm{XMod}(\Hc)$, giving an explicit description to the latter. 

\begin{proposition}\label{eqcat}
    There is an equivalence of categories between $\mathrm{HXMod}(\Hc)$ and $\mathrm{RMG}(\Hc)$. Hence also $\mathrm{HXMod}(\Hc)$ and $\mathrm{Grpd}(\Hc)$ are equivalent categories.
\end{proposition}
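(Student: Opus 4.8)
The plan is to construct functors in both directions and verify they are mutually (pseudo-)inverse. Starting from a color Hopf crossed module $(A,H,d)$, I would associate the reflexive graph with $A_{0}=A$, $A_{1}=H\rtimes A$, source $\delta=p_{2}:H\rtimes A\to A$ (so $\delta(h\otimes a)=\epsilon(h)a$), identity $i=\iota_{2}:A\to H\rtimes A$ (so $i(a)=1_{H}\otimes a$), and target $\gamma:H\rtimes A\to A$ defined by $\gamma(h\otimes a):=d(h)a$. One checks $\gamma$ is a morphism of color Hopf algebras using \eqref{cm1} (the crossed-module axiom saying $d$ intertwines the $A$-action with $\triangleright$): the algebra-map property reduces to $d(a_{1}\cdot h')a_{2}=a_{1}d(h')S(a_{2})a_{3}=a d(h')$, and the coalgebra-map property is immediate since $d$ and the multiplication of $A$ are coalgebra maps; clearly $\gamma\circ i=\mathrm{Id}_{A}=\delta\circ i$, so this is a reflexive graph in $\Hc$. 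Conversely, from a reflexive graph \eqref{refgrapHc} I would set $H:=\mathrm{Hker}(p)$ (using $p$ for the source), which by Proposition \ref{prop:fiso} is an $A_{0}$-module color Hopf algebra via $a\cdot k=i(a)\triangleright k$, and define $d:=\gamma\circ j:H\to A_{0}$ where $j:\mathrm{Hker}(p)\to A_{1}$ is the kernel inclusion.

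The heart of the argument is checking that these assignments land in the right categories, and this is where Lemma \ref{lemmarefgraph} does the essential work. For the forward direction, to see that the reflexive graph built from $(A,H,d)$ is a reflexive-multiplicative graph (equivalently an internal groupoid), by Lemma \ref{lemmarefgraph}(3) it suffices to check that $\mathrm{Hker}(\delta)$ and $\mathrm{Hker}(\gamma)$ commute in the $\phi$-twisted sense. Now $\mathrm{Hker}(\delta)=\mathrm{Hker}(p_{2})\cong H$ sits inside $H\rtimes A$ as $h\otimes 1_{A}$, while $\mathrm{Hker}(\gamma)$ consists of elements $h\otimes a$ with $d(h)a=\epsilon(h)1_{A}$; using the semidirect-product multiplication and axiom \eqref{cm2} (the Peiffer-type identity $d(g)\cdot h=g\triangleright h$), one computes that for $x=g\otimes 1$ in the first and a general element of the second, the required commutation $xy=\phi(|x|,|y|)yx$ holds. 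For the backward direction, one must verify that $(\mathrm{Hker}(p),A_{0},\gamma\circ j)$ satisfies \eqref{cm1} and \eqref{cm2}: \eqref{cm1} follows from naturality of $\triangleright$ under the morphism $\gamma$ together with $\gamma\circ j$ being the definition of $d$; \eqref{cm2} is exactly the content of $[\mathrm{Hker}(p),\mathrm{Hker}(\gamma)]_{\mathrm{Huq}}=0$ from Lemma \ref{lemmarefgraph}, translated through the isomorphism $H\rtimes A_{0}\cong A_{1}$ of Proposition \ref{prop:fiso}.

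For the natural isomorphisms, one direction is essentially Proposition \ref{prop:fiso}: starting from a reflexive graph, passing to $\mathrm{Hker}(p)$ and back to $\mathrm{Hker}(p)\rtimes A_{0}$ recovers $A_{1}$ via $f(k\otimes a)=ki(a)$, and this isomorphism is compatible with $\delta$, $i$, and — crucially — with $\gamma$, since $\gamma(f(k\otimes a))=\gamma(k)\gamma(i(a))=d(k)\,a$ matches the target of the reconstructed graph by the definition $d=\gamma\circ j$. The other direction, starting from $(A,H,d)$ and returning to $(A, \mathrm{Hker}(p_{2}), \gamma\circ j)$, uses $\mathrm{Hker}(p_{2})\cong H$ and the fact that $\gamma\circ j$ sends $h\otimes 1_{A}\mapsto d(h)$, recovering $d$; compatibility of the module structures is the identity $i(a)\triangleright(h\otimes 1)=(a\cdot h)\otimes 1$ inside $H\rtimes A$, which follows from the semidirect product multiplication. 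Functoriality on morphisms and naturality of these isomorphisms are routine diagram chases. The main obstacle I anticipate is the careful bookkeeping of the bicharacter $\phi$-signs when verifying that $\gamma(h\otimes a)=d(h)a$ is an algebra morphism and when checking the commutation condition of Lemma \ref{lemmarefgraph}(3) in the semidirect product — these are the points where axioms \eqref{cm1} and \eqref{cm2} must be invoked in precisely the $\phi$-twisted form, and a sign error there would break the equivalence. The final sentence of the statement is then immediate: $\mathrm{Grpd}(\Hc)\cong\mathrm{RMG}(\Hc)$ since $\Hc$ is Mal'tsev (being semi-abelian), so $\mathrm{HXMod}(\Hc)\simeq\mathrm{RMG}(\Hc)\cong\mathrm{Grpd}(\Hc)$.
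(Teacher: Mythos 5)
Your overall strategy matches the paper for the direction $\mathrm{RMG}(\Hc)\to\mathrm{HXMod}(\Hc)$ (take $\mathrm{Hker}(p)$ with the action of Proposition \ref{prop:fiso}, set $d=\gamma\circ j$, get \eqref{cm1} by applying $\gamma$, and get \eqref{cm2} from $[\mathrm{Hker}(p),\mathrm{Hker}(\gamma)]_{\mathrm{Huq}}=0$), and also for the natural isomorphisms, which in both treatments come from $\mathrm{Hker}(p_{2})\cong H$ and from the isomorphism $k\otimes a\mapsto ki(a)$ of Proposition \ref{prop:fiso} together with its compatibility with $p,\gamma,i$. Where you genuinely diverge is the direction $\mathrm{HXMod}(\Hc)\to\mathrm{RMG}(\Hc)$: the paper does \emph{not} use Lemma \ref{lemmarefgraph} there, but instead exhibits the multiplication explicitly, $m(x\otimes b\otimes x'\otimes b')=x\epsilon(b)x'\otimes b'$ on the pullback $(X\rtimes B)\times_{B}(X\rtimes B)$, and verifies by a direct computation (using \eqref{cm2} and the pullback identity) that it is an algebra map satisfying the unit laws; you instead invoke the commutation criterion of Lemma \ref{lemmarefgraph}(3) and check that $\mathrm{Hker}(p_{2})$ and $\mathrm{Hker}(p_{1})$ commute inside $H\rtimes A$. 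Your route is viable and arguably more symmetric with the converse direction: using Proposition \ref{prop:fiso} for the splitting $(p_{1},\iota_{2})$, the Hopf kernel of $p_{1}$ is spanned by elements $h_{1}\otimes S(d(h_{2}))$, and the commutation $xy=\phi(|x|,|y|)yx$ with $x=g\otimes 1_{A}$ then follows from \eqref{cm2}, cocommutativity and the antipode identities, with the usual $\phi$-bookkeeping. What the paper's explicit $m$ buys is independence from the commutation lemma at this point and a concrete formula reused later (e.g.\ in the simplicial section); what your route buys is a shorter conceptual argument once the kernel of $p_{1}$ is identified. One caution: your description of $\mathrm{Hker}(\gamma)$ as ``elements $h\otimes a$ with $d(h)a=\epsilon(h)1_{A}$'' is only the image under $\epsilon\otimes\mathrm{Id}$ of the actual Hopf-kernel condition $y_{1}\otimes p_{1}(y_{2})=y\otimes 1_{A}$; the commutation computation needs the full comultiplicative condition (equivalently the spanning set $h_{1}\otimes S(d(h_{2}))$ above), so make sure the verification is carried out against the correct kernel. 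The final reduction $\mathrm{RMG}(\Hc)\cong\mathrm{Grpd}(\Hc)$ via the Mal'tsev property is exactly as in the paper.
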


\begin{proof}
Given a reflexive graph in $\Hc$ as in \eqref{refgrapHc} which is multiplicative, 
one can consider the morphism of color Hopf algebras $d:=\gamma\circ j:\mathrm{Hker}(p)\to A_{0}$, where $j:\mathrm{Hker}(p)\to A_{1}$ is the canonical inclusion. We know that $\mathrm{Hker}(p)$ is a cocommutative $A_{0}$-module color Hopf algebra by Proposition \ref{prop:fiso} with left $A_{0}$-action given by $a\cdot k=\phi(|a_{2}|,|k|)i(a_{1})ki(S(a_{2}))$ for every $k\in\mathrm{Hker}(p)$ and $a\in A_{0}$. We only have to verify that $d$ satisfies \eqref{cm1} and \eqref{cm2} of Definition \ref{def:colorHopfcrossed}. Thus, for all $a\in A_{0}$ and $k\in\mathrm{Hker}(p)$, we compute
\[
\begin{split}
    d(a\cdot k)&=\gamma(\phi(|a_{2}|,|k|)i(a_{1})ki(S(a_{2})))=\phi(|a_{2}|,|k|)\gamma(i(a_{1}))\gamma(k)\gamma(i(S(a_{2})))=\phi(|a_{2}|,|k|)a_{1}d(k)S(a_{2}).
\end{split}
\]
Since the reflexive graph is multiplicative, by Lemma \ref{lemmarefgraph} we know that $[\mathrm{Hker}(p),\mathrm{Hker}(\gamma)]_{\mathrm{Huq}}=0$, i.e., $xy=\phi(|x|,|y|)yx$ for every $x\in\mathrm{Hker}(p)$ and $y\in\mathrm{Hker}(\gamma)$. But now $S(k_{1})i(\gamma(k_{2}))\in\mathrm{Hker}(\gamma)$ for all $k\in\mathrm{Hker}(p)$, indeed
\[
\begin{split}
(\mathrm{Id}\otimes\gamma)\Delta(S(k_{1})i(\gamma(k_{2})))&=(\mathrm{Id}\otimes\gamma)(\phi(|k_{2}|,|k_{3}|)S(k_{1})i(\gamma(k_{3}))\otimes S(k_{2})i(\gamma(k_{4})))\\&=S(k_{1})i(\gamma(k_{2}))\otimes \gamma(S(k_{3}))\gamma(i(\gamma(k_{4})))\\&=S(k_{1})i(\gamma(k_{2}))\otimes\gamma(S(k_{3})k_{4})\\&=S(k_{1})i(\gamma(k_{2}))\otimes\epsilon(k_{3})1_{A_{0}}\\&=S(k_{1})i(\gamma(k_{2}))\otimes1_{A_{0}}
\end{split}
\]
and then, for all $b,k\in\mathrm{Hker}(p)$, we obtain that
\begin{equation}\label{commHker}
S(k_{1})i(\gamma(k_{2}))b=\phi(|S(k_{1})i(\gamma(k_{2}))|,|b|)bS(k_{1})i(\gamma(k_{2}))=\phi(|k_{1}\otimes k_{2}|,|b|)bS(k_{1})i(\gamma(k_{2})).
\end{equation}
Thus we can compute
\[
\begin{split}
    d(g)\cdot h&=\gamma(g)\cdot h=\phi(|g_{2}|,|h|)i(\gamma(g_{1}))hi(S(\gamma(g_{2})))=\phi(|g_{4}|,|h|)g_{1}S(g_{2})i(\gamma(g_{3}))hi(S(\gamma(g_{4})))\\&\overset{\eqref{commHker}}{=}
\phi(|g_{4}|,|h|)\phi(|g_{2}\otimes g_{3}|,|h|)g_{1}hS(g_{2})i(\gamma(g_{3}))i(\gamma(S(g_{4})))\\&=
\phi(|g_{2}\otimes g_{3}|,|h|)g_{1}hS(g_{2})\epsilon(g_{3})=\phi(|g_{2}|,|h|)g_{1}hS(g_{2})
\end{split}
\]
and then $(A_{0},\mathrm{Hker}(p),d)$ is a color Hopf crossed module. Hence we have obtained a functor $F:\mathrm{RMG}(\Hc)\to\mathrm{HXMod}(\Hc)$ defined as 
\[
\begin{tikzcd}
	A_{1} && A_{0} \\
	A'_{1} && A'_{0}
	\arrow[shift left=3, from=1-1, to=1-3, "p"]
	\arrow[shift right=3, from=1-1, to=1-3, "\gamma"']
	\arrow["i"{description}, from=1-3, to=1-1]
	\arrow[shift left=3, from=2-1, to=2-3, "p'"]
	\arrow[shift right=3, from=2-1, to=2-3, "\gamma'"']
	\arrow["i'"{description}, from=2-3, to=2-1]
	\arrow[from=1-1, to=2-1, "f_{1}"']
	\arrow[from=1-3, to=2-3, "f_{0}"]
\end{tikzcd}
\longmapsto
\begin{tikzcd}
	\mathrm{Hker}(p) && A_{0} \\
	\mathrm{Hker}(p') && A'_{0}
	\arrow[from=1-1, to=1-3, "d"]
	\arrow[from=1-3, to=2-3, "f_{0}"]
	\arrow[from=1-1, to=2-1, "\bar{f_{1}}"']
	\arrow[from=2-1, to=2-3, "d'"']
\end{tikzcd}
\]
where $\bar{f_{1}}$ is the morphism induced by $f_{1}$ using the universal property of the kernel. 
\medskip

Given a color Hopf crossed module $(B,X,d:X\to B)$, we define the reflexive graph
\begin{equation}\label{graphcrossed}
\begin{tikzcd}
	X\rtimes B && B
	\arrow[shift left=3, from=1-1, to=1-3, "p_{2}"]
	\arrow[shift right=3, from=1-1, to=1-3, "p_{1}"']
	\arrow["\iota_{2}"{description}, from=1-3, to=1-1]
\end{tikzcd}
\end{equation}
where $p_{2}(x\otimes b)=\epsilon(x)b$, $p_{1}(x\otimes b)=d(x)b$ for all $x\in X$, $b\in B$ and $\iota_{2}(b)=1_{X}\otimes b$. Note that $p_{1}=m_{B}\circ(d\otimes\mathrm{Id}_{B})$, which is clearly a morphism of graded coalgebras, is also a morphism of algebras. Indeed, given $x,x'\in X$ and $b,b'\in B$, we have 
\[
\begin{split}
p_{1}((x\otimes b)(x'\otimes b'))&=p_{1}(\phi(|b_{2}|,|x'|)x(b_{1}\cdot x')\otimes b_{2}b')=\phi(|b_{2}|,|x'|)d(x(b_{1}\cdot x'))b_{2}b'\\&=\phi(|b_{2}|,|x'|)d(x)d(b_{1}\cdot x')b_{2}b'\overset{\eqref{cm1}}{=}\phi(|b_{3}|,|x'|)\phi(|b_{2}|,|x'|)d(x)b_{1}d(x')S(b_{2})b_{3}b'\\&=\phi(|b_{2}|,|x'|)d(x)b_{1}d(x')\epsilon(b_{2})b'=d(x)bd(x')b'=p_{1}(x\otimes b)p_{1}(x'\otimes b').
\end{split}
\]
Clearly $p_{2}\circ\iota_{2}=p_{1}\circ\iota_{2}=\mathrm{Id}_{B}$. The pullback of the pair $(p_{2},p_{1})$ in $\Hc$ is given by elements $x\otimes b\otimes x'\otimes b'\in(X\rtimes B)\otimes(X\rtimes B)$ such that 
\[
(x\otimes b)_{1}\otimes p_{2}((x\otimes b)_{2})\otimes x'\otimes b'=x\otimes b\otimes p_{1}((x'\otimes b')_{1})\otimes(x'\otimes b')_{2},
\]
i.e., $\phi(|x_{2}|,|b_{1}|)x_{1}\otimes b_{1}\otimes\epsilon(x_{2})b_{2}\otimes x'\otimes b'=\phi(|x'_{2}|,|b'_{1}|)x\otimes b\otimes d(x'_{1})b'_{1}\otimes x'_{2}\otimes b'_{2}$ and then
\[
(X\rtimes B)\times_{B}(X\rtimes B)=\{x\otimes b\otimes x'\otimes b'\in(X\rtimes B)\otimes(X\rtimes B)\ |\ x\otimes b_{1}\otimes b_{2}\otimes x'\otimes b'=\phi(|x'_{2}|,|b'_{1}|)x\otimes b\otimes d(x'_{1})b'_{1}\otimes x'_{2}\otimes b'_{2}\}. 
\]
By applying $\mathrm{Id}_{X}\otimes\epsilon_{B}\otimes\mathrm{Id}_{B}\otimes\mathrm{Id}_{X}\otimes\Delta_{B}$, it follows that 
\begin{equation}\label{eq:1}
x\otimes b\otimes x'\otimes b'_{1}\otimes b'_{2}=\phi(|x'_{2}|,|b'_{1}|)x\otimes\epsilon(b)d(x'_{1})b'_{1}\otimes x'_{2}\otimes b'_{2}\otimes b'_{3}. 
\end{equation}
The reflexive graph \eqref{graphcrossed} is multiplicative with $m:(X\rtimes B)\times_{B}(X\rtimes B)\to X\rtimes B$ defined by $m:=(m_{X}\otimes\mathrm{Id}_{B})\circ(\mathrm{Id}_{X}\otimes\epsilon_{B}\otimes\mathrm{Id}_{X}\otimes\mathrm{Id}_{B})$, i.e.,
\[
m(x\otimes b\otimes x'\otimes b')
=x\epsilon(b)x'\otimes b'\ \text{for all}\ x,x'\in X\ \text{and}\ b,b'\in B.
\]
This is effectively a morphism in $\Hc$. Indeed, $m
$ 
is automatically a morphism of graded coalgebras, so we only have to show that it is a morphism of algebras.
Given $x\otimes b\otimes x'\otimes b'\otimes y\otimes c\otimes y'\otimes c'\in((X\rtimes B)\times_{B}(X\rtimes B))\otimes((X\rtimes B)\times_{B}(X\rtimes B))$ we can compute
\[
\begin{split}
    &m(x\otimes b\otimes x'\otimes b')\cdot m(y\otimes c\otimes y'\otimes c')=(x\epsilon(b)x'\otimes b')(y\epsilon(c)y'\otimes c')=
\\&=\phi(|b'_{2}|,|y\epsilon(c)y'|)x\epsilon(b)x'(b'_{1}\cdot y\epsilon(c)y')\otimes b'_{2}c'=\phi(|c|,|y'|)\phi(|b'_{2}|,|yy'|)x\epsilon(b)x'(b'_{1}\cdot yy')\otimes b'_{2}\epsilon(c)c'\\&=\phi(|c|,|y'|)\phi(|b'_{3}|,|yy'|)\phi(|b'_{2}|,|y|)x\epsilon(b)x'(b'_{1}\cdot y)(b'_{2}\cdot y')\otimes b'_{3}\epsilon(c)c'\\&=\phi(|c|,|y'|)\phi(|b'_{3}|,|yy'|)\phi(|b'_{2}|,|y|)\phi(|x'_{2}|,|b'_{1}\cdot y|)x\epsilon(b)x'_{1}(b'_{1}\cdot y)\epsilon(x'_{2})(b'_{2}\cdot y')\otimes b'_{3}\epsilon(c)c'\\&=\phi(|c|,|y'|)\phi(|b'_{3}|,|yy'|)\phi(|b'_{2}|,|y|)\phi(|x'_{2}\otimes x'_{3}|,|b'_{1}\cdot y|)x\epsilon(b)x'_{1}(b'_{1}\cdot y)S(x'_{2})x'_{3}(b'_{2}\cdot y')\otimes b'_{3}\epsilon(c)c'\\&\overset{\eqref{cm2}}{=}\phi(|c|,|y'|)\phi(|b'_{3}|,|yy'|)\phi(|b'_{2}|,|y|)\phi(|x'_{2}|,|b'_{1}\cdot y|)x\epsilon(b)(d(x'_{1})\cdot(b'_{1}\cdot y))x'_{2}(b'_{2}\cdot y')\otimes b'_{3}\epsilon(c)c'
\end{split}
\]
and
\[
 \begin{split}   
    &m((x\otimes b\otimes x'\otimes b')(y\otimes c\otimes y'\otimes c'))=m(\phi(|x'\otimes b'|,|y\otimes c|)(x\otimes b)(y\otimes c)\otimes(x'\otimes b')(y'\otimes c'))\\&=m(\phi(|x'\otimes b'|,|y\otimes c|)\phi(|b_{2}|,|y|)\phi(|b'_{2}|,|y'|)x(b_{1}\cdot y)\otimes b_{2}c\otimes x'(b'_{1}\cdot y')\otimes b'_{2}c')\\&=\phi(|x'\otimes b'|,|y\otimes c|)\phi(|b_{2}|,|y|)\phi(|b'_{2}|,|y'|)x(b_{1}\cdot y)\epsilon(b_{2}c)x'(b'_{1}\cdot y')\otimes b'_{2}c'\\&=\phi(|x'\otimes b'|,|y|)\phi(|b'_{2}|,|y'|)\phi(|c|,|y'|)x(b\cdot y)x'(b'_{1}\cdot y')\otimes b'_{2}\epsilon(c)c'
\\&=\phi(|c|,|y'|)\phi(|b'_{2}|,|yy'|)\phi(|b'_{1}|,|y|)\phi(|x'|,|y|)x(b\cdot y)x'(b'_{1}\cdot y')\otimes b'_{2}\epsilon(c)c'\\&\overset{\eqref{eq:1}}{=}\phi(|c|,|y'|)\phi(|b'_{3}|,|yy'|)\phi(|b'_{2}|,|y|)\phi(|x'_{2}|,|b'_{1}\cdot y|)x(\epsilon(b)d(x'_{1})b'_{1}\cdot y)x'_{2}(b'_{2}\cdot y')\otimes b'_{3}\epsilon(c)c',
\end{split}
\]
so $m$ is a morphism of algebras. 

Moreover, for all $x\in X$ and $b\in B$, we have
\[
\begin{split}
m(\mathrm{Id}\otimes\iota_{2}p_{2})\Delta_{X\rtimes B}(x\otimes b)&=m(\phi(|x_{2}|,|b_{1}|)x_{1}\otimes b_{1}\otimes\iota_{2}p_{2}(x_{2}\otimes b_{2}))=m(\phi(|x_{2}|,|b_{1}|)x_{1}\otimes b_{1}\otimes\iota_{2}(\epsilon(x_{2})b_{2}))\\&=m(x\otimes b_{1}\otimes\iota_{2}(b_{2}))=m(x\otimes b_{1}\otimes1_{X}\otimes b_{2})=x\epsilon(b_{1})1_{X}\otimes b_{2}=x\otimes b
\end{split}
\]
and also
\[
\begin{split}
m(\iota_{2}p_{1}\otimes\mathrm{Id})\Delta_{X\rtimes B}(x\otimes b)&=m(\phi(|x_{2}|,|b_{1}|)\iota_{2}p_{1}(x_{1}\otimes b_{1})\otimes x_{2}\otimes b_{2})=m(\phi(|x_{2}|,|b_{1}|)\iota_{2}(d(x_{1})b_{1})\otimes x_{2}\otimes b_{2})\\&=m(\phi(|x_{2}|,|b_{1}|)1_{X}\otimes d(x_{1})b_{1}\otimes x_{2}\otimes b_{2})=\phi(|x_{2}|,|b_{1}|)1_{X}\epsilon(d(x_{1})b_{1})x_{2}\otimes b_{2}\\&=\phi(|x_{2}|,|b_{1}|)\epsilon(x_{1})\epsilon(b_{1})x_{2}\otimes b_{2}=x\otimes b,
\end{split}
\]
hence \eqref{graphcrossed} is a reflexive-multiplicative graph.
Therefore, we obtain a functor $G:\mathrm{HXMod}(\Hc)\to\mathrm{RMG}(\Hc)$ defined as
\[
\begin{tikzcd}
	X && B \\
	X' && B'
	\arrow[from=1-1, to=1-3, "d"]
	\arrow[from=1-3, to=2-3, "\beta"]
	\arrow[from=1-1, to=2-1, "\alpha"']
	\arrow[from=2-1, to=2-3, "d'"']
\end{tikzcd}
\longmapsto
\begin{tikzcd}
	X\rtimes B && B \\
	X'\rtimes B' && B'
	\arrow[shift left=3, from=1-1, to=1-3, "p_{2}"]
	\arrow[shift right=3, from=1-1, to=1-3, "p_{1}"']
	\arrow["\iota_{2}"{description}, from=1-3, to=1-1]
	\arrow[shift left=3, from=2-1, to=2-3, "p_{2}"]
	\arrow[shift right=3, from=2-1, to=2-3, "p_{1}"']
	\arrow["\iota_{2}"{description}, from=2-3, to=2-1]
	\arrow[from=1-1, to=2-1, "\alpha\otimes\beta"']
	\arrow[from=1-3, to=2-3, "\beta"]
\end{tikzcd}
\]
and the two functors $F$ and $G$ give rise to an equivalence of categories. 
Indeed, we obtain 
\[
(X\overset{d}\longrightarrow B)\overset{G}\longmapsto
\begin{tikzcd}
	(X\rtimes B && B
	\arrow[shift left=3, from=1-1, to=1-3, "p_{2}"]
	\arrow[shift right=3, from=1-1, to=1-3, "p_{1}"']
	\arrow["\iota_{2}"{description}, from=1-3, to=1-1])
\end{tikzcd}\overset{F}\longmapsto(\mathrm{Hker}(p_{2})\overset{d'}\longrightarrow B)
\]
where $d':=p_{1}\circ j$ and $j:\mathrm{Hker}(p_{2})\to X\rtimes B$ is the inclusion. We know that $\mathrm{Hker}(p_{2})=X\otimes\Bbbk1_{B}\cong X$ 
and so clearly we have an isomorphism of color Hopf crossed modules. Furthermore, we have
\[
\begin{tikzcd}
	(A_{1} && A_{0}
	\arrow[shift left=3, from=1-1, to=1-3, "p"]
	\arrow[shift right=3, from=1-1, to=1-3, "\gamma"']
	\arrow["i"{description}, from=1-3, to=1-1])
\end{tikzcd}\overset{F}\longmapsto
(\mathrm{Hker}(p)\overset{\gamma\circ j'}\longmapsto A_{0})\overset{G}\longmapsto
\begin{tikzcd}
	(\mathrm{Hker}(p)\rtimes A_{0}&& A_{0}
	\arrow[shift left=3, from=1-1, to=1-3, "p_{2}"]
	\arrow[shift right=3, from=1-1, to=1-3, "p_{1}"']
	\arrow["\iota_{2}"{description}, from=1-3, to=1-1])
\end{tikzcd}
\]
where $j':\mathrm{Hker}(p)\to A_{1}$ is the inclusion and $p_{1}(k\otimes a)=\gamma(k)a$ for all $k\in\mathrm{Hker}(p)$ and $a\in A_{0}$. The last reflexive-multiplicative graph is isomorphic to the starting one through the isomorphism $\phi:\mathrm{Hker}(p)\rtimes A_{0}\to A_{1}$, $k\otimes a\mapsto ki(a)$ of Proposition \ref{prop:fiso}. Indeed, for all $k\in\mathrm{Hker}(p)$ and $a\in A_{0}$, we obtain 
\[
p\phi(k\otimes a)=p(ki(a))=p(k)p(i(a))=p(k)a=\epsilon(k)a=p_{2}(k\otimes a), 
\]
i.e., $p\circ\phi=p_{2}$. In addition $\phi\circ\iota_{2}=i$ and 
\[
\gamma(\phi(k\otimes a))=\gamma(ki(a))=\gamma(k)\gamma(i(a))=\gamma(k)a=p_{1}(k\otimes a),
\]
i.e., $\gamma\circ\phi=p_{1}$. Then $\mathrm{HXMod}(\Hc)$ and $\mathrm{RMG}(\Hc)$ are equivalent categories. Moreover, RMG($\Hc$) and Grpd($\Hc$) are isomorphic since $\Hc$ is Mal'tsev \cite{Ca} and so also HXMod$(\Hc)$ and Grpd$(\Hc)$ are equivalent.
\end{proof}


The previous result generalizes \cite[Proposition 5.5]{MG}. Note also that it can be deduced from \cite[Proposition 3.13]{Bohm}, since a color Hopf algebra is precisely a Hopf monoid in the category Vec$_{G}$. 
Finally, we obtain the equivalence between $\mathrm{HXMod}(\Hc)$ and $\mathrm{XMod}(\Hc)$.

\begin{corollary}\label{corollary1}
    There is an equivalence between $\mathrm{HXMod}(\Hc)$ and $\mathrm{XMod}(\Hc)$.
\end{corollary}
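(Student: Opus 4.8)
The plan is to deduce the equivalence by composing the one established in Proposition \ref{eqcat} with Janelidze's equivalence between internal crossed modules and internal groupoids. First I would recall, from the preliminary section following \cite{Ja}, that for any semi-abelian category $\Cc$ the category $\mathrm{XMod}(\Cc)$ of internal crossed modules is equivalent to the category $\mathrm{Grpd}(\Cc)$ of internal groupoids in $\Cc$. Since $\Hc$ is semi-abelian by \cite[Theorem 6.1]{As}, applying this with $\Cc=\Hc$ yields an equivalence $\mathrm{XMod}(\Hc)\simeq\mathrm{Grpd}(\Hc)$.

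On the other hand, Proposition \ref{eqcat} already records an equivalence $\mathrm{HXMod}(\Hc)\simeq\mathrm{Grpd}(\Hc)$, obtained by combining the equivalence $\mathrm{HXMod}(\Hc)\simeq\mathrm{RMG}(\Hc)$ with the isomorphism $\mathrm{Grpd}(\Hc)\cong\mathrm{RMG}(\Hc)$, which holds because $\Hc$ is Mal'tsev \cite{Ca}. Composing this equivalence with the one of the previous paragraph gives $\mathrm{HXMod}(\Hc)\simeq\mathrm{XMod}(\Hc)$, which is the assertion.

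Since everything follows formally from results already proved, there is no genuine difficulty here; the only point requiring attention is the bookkeeping of the directions of the functors involved. If one wishes for an explicit comparison functor, one can trace the functors $F$ and $G$ of Proposition \ref{eqcat} through the standard construction of Janelidze's equivalence --- which assigns to an internal groupoid the crossed module given by the restriction of the target morphism to the kernel of the source together with its induced conjugation action --- thereby recovering exactly the data $d=\gamma\circ j:\mathrm{Hker}(p)\to A_{0}$ with $A_{0}$-action $\triangleright$ exhibited in the proof of Proposition \ref{eqcat}. This confirms that the color Hopf crossed modules of Definition \ref{def:colorHopfcrossed} are precisely the internal crossed modules in $\Hc$ written out componentwise.
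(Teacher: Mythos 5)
Your argument is correct and coincides with the paper's own proof: both deduce the statement by composing Janelidze's equivalence $\mathrm{XMod}(\Hc)\simeq\mathrm{Grpd}(\Hc)$ (valid since $\Hc$ is semi-abelian) with the equivalence $\mathrm{HXMod}(\Hc)\simeq\mathrm{Grpd}(\Hc)$ from Proposition \ref{eqcat}. The extra remarks on tracing the comparison functor are consistent with the constructions in the paper but are not needed.
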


\begin{proof}
    For any semi-abelian category $\Cc$, there is an equivalence between Grpd($\Cc$) and XMod($\Cc$), see \cite{Ja}, thus XMod($\Hc$) is equivalent to Grpd($\Hc$). Furthermore, by Proposition \ref{eqcat}, HXMod($\Hc$) is equivalent to 
    Grpd($\Hc$), so 
    the thesis follows.
\end{proof}

As a consequence we also obtain:

\begin{corollary}\label{corollary2}
    The category $\mathrm{XMod}(\Hc)$ and the category $\mathrm{HXMod}(\Hc)$ are semi-abelian.
\end{corollary}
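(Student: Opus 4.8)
The plan is to obtain both statements by transporting semi-abelianness along the equivalences already established. By Proposition~\ref{eqcat} the categories $\mathrm{HXMod}(\Hc)$ and $\mathrm{RMG}(\Hc)$ are equivalent, and since $\Hc$ is Mal'tsev the forgetful functor identifies $\mathrm{RMG}(\Hc)$ with $\mathrm{Grpd}(\Hc)$, while $\mathrm{XMod}(\Hc)$ is equivalent to $\mathrm{Grpd}(\Hc)$ by Corollary~\ref{corollary1}. Hence all of $\mathrm{HXMod}(\Hc)$, $\mathrm{XMod}(\Hc)$ and $\mathrm{Grpd}(\Hc)$ are equivalent, and it suffices to check that one of them is semi-abelian: being pointed, finitely cocomplete, Barr-exact and protomodular is manifestly invariant under equivalence of categories.

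First I would work with $\mathrm{XMod}(\Hc)$ (equivalently $\mathrm{Grpd}(\Hc)$) and invoke the general fact that, for an arbitrary semi-abelian category $\Cc$, the category $\mathrm{XMod}(\Cc)$ of internal crossed modules — equivalently, by \cite{Ja}, the category $\mathrm{Grpd}(\Cc)$ of internal groupoids in $\Cc$ — is again semi-abelian. Applying this with $\Cc=\Hc$, which is semi-abelian by \cite[Theorem~6.1]{As} under our standing hypotheses on $G$ and $\mathrm{char}\,\Bbbk$, gives that $\mathrm{XMod}(\Hc)$ and $\mathrm{Grpd}(\Hc)$ are semi-abelian; the equivalence of Corollary~\ref{corollary1} then transfers this to $\mathrm{HXMod}(\Hc)$ as well.

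The only non-formal ingredient is thus the stability of semi-abelianness under passage to internal crossed modules (equivalently, internal groupoids), and this is the point I would cite most carefully. If instead one wanted a self-contained argument, the task reduces to verifying the four semi-abelian axioms directly for $\mathrm{Grpd}(\Hc)$: pointedness is immediate, as the zero object is the discrete groupoid on the trivial color Hopf algebra $\Bbbk$; finite limits are computed on the underlying objects $A_0$ and $A_1$ and hence exist by finite completeness of $\Hc$, finite colimits exist by combining colimits in reflexive graphs with the coskeleton construction recalled in the preliminaries; and protomodularity follows from the Split Short Five Lemma in $\Hc$, since monomorphisms, kernels and pullbacks in $\mathrm{Grpd}(\Hc)$ are detected levelwise. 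The delicate step — which I expect to be the main obstacle — is Barr-exactness, i.e., showing that regular epimorphisms of internal groupoids are pullback-stable and that every internal equivalence relation in $\mathrm{Grpd}(\Hc)$ is effective; here one exploits the regularity of $\Hc$ from \cite{As} together with Lemma~\ref{lemmarefgraph}, by which in $\Hc$ every reflexive multiplicative graph is already an internal groupoid, so that no extra structure is lost in passing between $\mathrm{Grpd}(\Hc)$ and $\mathrm{RMG}(\Hc)$.
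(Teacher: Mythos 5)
Your argument is correct and matches the paper's proof: the paper likewise transports semi-abelianness along the equivalences of Proposition~\ref{eqcat} and Corollary~\ref{corollary1} and invokes the known fact that $\mathrm{Grpd}(\Cc)$ is semi-abelian whenever $\Cc$ is (citing \cite{MG2} and \cite[Lemma 4.1]{MG3}, which is the precise reference you should attach to your ``general fact''). The extra sketch of a direct verification is not needed.
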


\begin{proof}
    By Proposition \ref{eqcat} and Corollary \ref{corollary1} we know that the categories XMod($\Hc$), HXMod($\Hc$) and Grpd($\Hc$) are all equivalent. Since $\Hc$ is semi-abelian then also Grpd($\Hc$) is semi-abelian (see \cite{MG2} and \cite[Lemma 4.1]{MG3}) and so we are done.
\end{proof}

\begin{remark}
Since the category $\mathrm{HXMod}(\Hc)$ is semi-abelian one could study the category XMod($\mathrm{HXMod}(\Hc)$) of internal crossed modules in the category of color Hopf crossed modules. Note that we have the following equivalences of categories
\[
 \mathrm{XMod}(\mathrm{HXMod}(\Hc))\cong\mathrm{Grpd}(\mathrm{HXMod}(\Hc))\cong\mathrm{Grpd}(\mathrm{Grpd}(\Hc))
\]
where the latter is the category of double internal groupoids in $\Hc$. In \cite[Definitions 2.3 and 2.4]{Sterck} the category of Hopf crossed squares $X^{2}(\mathrm{Hopf}_{\Bbbk,\mathrm{coc}})$ is introduced and in \cite[Corollary 3.3]{Sterck} it is shown that this is equivalent to XMod(HXMod(Hopf$_{\Bbbk,\mathrm{coc}}$)). Then it would be interesting to extend the previous result to $\Hc$ introducing the notion of color Hopf crossed squares. We leave this as a possible future project. 
\end{remark}

\section{Simplicial color Hopf algebras and color Hopf crossed modules}

In \cite[Theorem 5.5]{Emir} it is shown that the category of crossed modules of cocommutative Hopf algebras is equivalent to the category of simplicial cocommutative Hopf algebras with Moore complex of length one. We want to extend this result to the case of cocommutative color Hopf algebras. We start by giving the following two definitions which employ the notions given in the preliminary section.

\begin{definition}
A simplicial cocommutative color Hopf algebra $\mathcal{H}$ is a simplicial object in the category $\Hc$. In other words, it is given by a collection of cocommutative color Hopf algebras $H_{n}$, with $n\in\mathbb{N}$, together with color Hopf algebra maps 
\[
d^{n}_{i}:H_{n}\to H_{n-1},\ \text{for}\ 0\leq i\leq n\ \ \text{and}\ \ s^{n+1}_{j}:H_{n}\to H_{n+1},\ \text{for}\ 0\leq j\leq n
\]
that satisfy the simplicial identities:
\begin{enumerate}
    \item[1)] $d^{n-1}_{i}\circ d^{n}_{j}=d^{n-1}_{j-1}\circ d^{n}_{i}$ if $i<j$; \medskip
    \item[2)] $s^{n+1}_{i}\circ s^{n}_{j}=s^{n+1}_{j+1}\circ s^{n}_{i}$ if $i\leq j$; \medskip
    \item[3)] $d^{n}_{i}\circ s^{n}_{j}=s^{n-1}_{j-1}\circ d^{n-1}_{i}$ if $i<j$,\ $d^{n}_{j}\circ s^{n}_{j}=d^{n}_{j+1}\circ s^{n}_{j}=\mathrm{Id}$,\ $d^{n}_{i}\circ s^{n}_{j}=s^{n-1}_{j}\circ d^{n-1}_{i-1}$ if $i>j+1$.
\end{enumerate}
A simplicial cocommutative color Hopf algebra can be represented by the following diagram
\begin{equation}\label{simplicolor}
\begin{tikzcd}
	\mathcal{H}: && H_{3} & H_{2} & H_{1} & H_{0}
	\arrow["d_{0}"{description},from=1-5, to=1-6]
	\arrow[shift left=2, from=1-5, to=1-6, "d_{1}"]
	\arrow[shift left=2, from=1-6, to=1-5,"s_{0}"]
	\arrow["d_{0}"{description}, from=1-4, to=1-5]
	\arrow["d_{1}"{description}, shift left=3, from=1-4, to=1-5]
	\arrow[shift left=5, from=1-4, to=1-5,"d_{2}"]
	\arrow["s_{0}"{description}, shift left=3, from=1-5, to=1-4]
	\arrow[shift left=5, from=1-5, to=1-4, "s_{1}"]
	\arrow[shift left=2, from=1-3, to=1-4]
	\arrow[shift left=6, from=1-3, to=1-4]
	\arrow[shift left=4, from=1-3, to=1-4]
	\arrow[from=1-3, to=1-4]
	\arrow[shift left=2, from=1-4, to=1-3]
	\arrow[shift left=4, from=1-4, to=1-3]
	\arrow[shift left=6, from=1-4, to=1-3]
	\arrow[dotted, no head, from=1-1, to=1-3]
\end{tikzcd}
\end{equation}
where we omit the upper indexes for the maps $d_{i}$ and $s_{i}$. 
\end{definition}

\begin{definition}
Given a simplicial cocommutative color Hopf algebra $\mathcal{H}$, 
the chain complex $(M(\mathcal{H})_{\bullet},\partial_{\bullet})$ is given by:
\begin{enumerate}
    \item $M(\mathcal{H})_{n}=\{0\}$ for $n<0$ and $M(\mathcal{H})_{0}=H_{0}$, \medskip
    \item $M(\mathcal{H})_{n}=\bigcap_{i=0}^{n-1}{\mathrm{Hker}(d^{n}_{i})}$ for $n\geq1$, \medskip
    \item $\partial_{n}:M(\mathcal{H})_{n}\to M(\mathcal{H})_{n-1}$ is the restriction of $d^{n}_{n}$ to $M(\mathcal{H})_{n}$ for $n\geq1$ (and the zero morphism for $n\leq0$),
\end{enumerate}
which is called the \textit{Moore complex} of $\mathcal{H}$. We say that it has length $m$ if $M(\mathcal{H})_{i}$ is the zero object $\Bbbk$ for all $i>m$.
\end{definition}

Note that, given a simplicial cocommutative color Hopf algebra $\mathcal{H}$ as in \eqref{simplicolor}, we have a splitting of morphisms \begin{tikzcd}
	H_{n} & H_{n-1}
	\arrow[shift left=1, from=1-1, to=1-2, "d^{n}_{i}"]
	\arrow[shift left=1, from=1-2, to=1-1,"s^{n}_{i}"]
\end{tikzcd}
in $\Hc$ for all $n\geq1$, by 3) of the simplicial identities. Hence, from Proposition \ref{prop:fiso} 
we immediately obtain the following result:

\begin{proposition}\label{prop:fiso2}
In a simplicial cocommutative color Hopf algebra $\mathcal{H}$, there exists an action
\[
\cdot_{i}: H_{n-1}\otimes\mathrm{Hker}(d^{n}_{i})\to\mathrm{Hker}(d^{n}_{i}),\ a\otimes k\mapsto\phi(|a_{2}|,|k|)s^{n}_{i}(a_{1})ks^{n}_{i}(S(a_{2}))
\]
for all $0\leq i\leq n-1$ and $n\geq1$ which makes $\mathrm{Hker}(d^{n}_{i})$ a cocommutative $H_{n-1}$-module color Hopf algebra. Moreover, we have an isomorphism of color Hopf algebras
\[
H_{n}\cong\mathrm{Hker}(d^{n}_{i})\rtimes_{i}H_{n-1} 
\]
for all $1\leq n\in\mathbb{N}$ and $0\leq i\leq n-1$.
\end{proposition}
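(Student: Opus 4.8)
The plan is to obtain both assertions as an immediate specialization of Proposition \ref{prop:fiso}. The first step is to exhibit, for each $n\geq 1$ and each $0\leq i\leq n-1$, a splitting of morphisms of color Hopf algebras. I would take $p:=d^{n}_{i}:H_{n}\to H_{n-1}$ together with the degeneracy operator $s^{n}_{i}:H_{n-1}\to H_{n}$ in the role of the section: these are morphisms in $\Hc$ by the very definition of a simplicial cocommutative color Hopf algebra, and the relation $d^{n}_{i}\circ s^{n}_{i}=\mathrm{Id}_{H_{n-1}}$ is exactly the middle equality in item 3) of the simplicial identities taken with $j=i$. Note that $s^{n}_{i}$ is defined precisely for $0\leq i\leq n-1$, the same range as the face maps $d^{n}_{i}$ entering the Moore complex, so the indices line up.

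Since $H_{n-1}$ is cocommutative, the hypotheses of Proposition \ref{prop:fiso} are met. Applying it with $H=H_{n}$, $A=H_{n-1}$, $p=d^{n}_{i}$ and section $s^{n}_{i}$ yields at once that $\mathrm{Hker}(d^{n}_{i})$ carries the structure of a cocommutative $H_{n-1}$-module color Hopf algebra through the action $a\otimes k\mapsto\phi(|a_{2}|,|k|)s^{n}_{i}(a_{1})k\,s^{n}_{i}(S(a_{2}))$, which is precisely the morphism $\cdot_{i}$ displayed in the statement, and that the map $f:\mathrm{Hker}(d^{n}_{i})\rtimes_{i}H_{n-1}\to H_{n}$, $k\otimes a\mapsto k\,s^{n}_{i}(a)$, is an isomorphism of color Hopf algebras with inverse $h\mapsto h_{1}s^{n}_{i}(d^{n}_{i}(S(h_{2})))\otimes d^{n}_{i}(h_{3})$. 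Here $\rtimes_{i}$ denotes the semi-direct product formed with respect to the action $\cdot_{i}$. This establishes both claims.

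Because the proof is entirely a matter of specializing an already established result, there is no genuine obstacle; the only point deserving a moment's care is the bookkeeping with the simplicial identities needed to confirm that $(d^{n}_{i},s^{n}_{i})$ really is a split epimorphism in $\Hc$ for every admissible $i$, which is immediate from item 3).
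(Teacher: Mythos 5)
Your proposal is correct and coincides with the paper's own argument: the paper likewise observes that simplicial identity 3) gives the splitting $d^{n}_{i}\circ s^{n}_{i}=\mathrm{Id}_{H_{n-1}}$ in $\Hc$ and then obtains the action $\cdot_{i}$ and the isomorphism $H_{n}\cong\mathrm{Hker}(d^{n}_{i})\rtimes_{i}H_{n-1}$ immediately from Proposition \ref{prop:fiso}, with the same explicit inverse $h\mapsto h_{1}s^{n}_{i}d^{n}_{i}(S(h_{2}))\otimes d^{n}_{i}(h_{3})$. The index bookkeeping you carry out is exactly what makes the specialization legitimate, so there is nothing to add.
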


\begin{remark}
    Explicitly, in Proposition \ref{prop:fiso2}, the isomorphism is given by $f:\mathrm{Hker}(d^{n}_{i})\rtimes H_{n-1}\to H_{n}$, $k\otimes a\mapsto ks^{n}_{i}(a)$ with inverse given by $g:H_{n}\to\mathrm{Hker}(d^{n}_{i})\rtimes H_{n-1}$, $h\mapsto h_{1}s^{n}_{i}d^{n}_{i}(S(h_{2}))\otimes d^{n}_{i}(h_{3})$, i.e., $g=(f^{n}_{i}\otimes d^{n}_{i})\circ\Delta_{H_{n}}$ with $f^{n}_{i}:H_{n}\to\mathrm{Hker}(d^{n}_{i})\subseteq H_{n}$, $x\mapsto x_{1}s^{n}_{i}d^{n}_{i}(S(x_{2}))$. Recall that $a\cdot_{i}k=\xi_{H_{n}}(s^{n}_{i}(a)\otimes k)=s^{n}_{i}(a)\triangleright k$. Note that, given $x\in H_{n}$ and considered 
\[  
    f^{n}_{i+1}f^{n}_{i}(x)=f^{n}_{i+1}(x_{1}s^{n}_{i}d^{n}_{i}(S(x_{2})))\overset{(*)}{=}x_{1}s^{n}_{i}d^{n}_{i}(S(x_{2}))s^{n}_{i+1}d^{n}_{i+1}(s^{n}_{i}d^{n}_{i}(x_{3})S(x_{4})),
\]    
where $(*)$ follows by cocommutativity, then
\[
\begin{split}
d^{n}_{i}(f^{n}_{i+1}f^{n}_{i}(x))&=d^{n}_{i}(x_{1})d^{n}_{i}s^{n}_{i}d^{n}_{i}(S(x_{2}))d^{n}_{i}s^{n}_{i+1}d^{n}_{i+1}s^{n}_{i}d^{n}_{i}(x_{3})d^{n}_{i}s^{n}_{i+1}d^{n}_{i+1}(S(x_{4}))\\&=d^{n}_{i}(x_{1})d^{n}_{i}(S(x_{2}))d^{n}_{i}s^{n}_{i+1}d^{n}_{i}(x_{3})s^{n-1}_{i}d^{n-1}_{i}d^{n}_{i+1}(S(x_{4}))\\&=d^{n}_{i}(x_{1}S(x_{2}))s^{n-1}_{i}d^{n-1}_{i}d^{n}_{i}(x_{3})s^{n-1}_{i}d^{n-1}_{i}d^{n}_{i}(S(x_{4}))\\&=\epsilon(x_{1})s^{n-1}_{i}d^{n-1}_{i}d^{n}_{i}(x_{2}S(x_{3}))=\epsilon(x_{1})\epsilon(x_{2})=\epsilon(x),
\end{split}
\]
and then $f^{n}_{i+1}f^{n}_{i}(x)\in\mathrm{Hker}(d^{n}_{i+1})\cap\mathrm{Hker}(d^{n}_{i})$. Hence $f^{n}:=f^{n}_{n-1}\circ f^{n}_{n-2}\circ\cdots\circ f^{n}_{1}\circ f^{n}_{0}:H_{n}\to\bigcap_{i=0}^{n-1}{\mathrm{Hker}(d^{n}_{i})}=M(\mathcal{H})_{n}$, where we identify $f^{n}_{i}$ with $j^{n}_{i}\circ f^{n}_{i}$ and $j^{n}_{i}:\mathrm{Hker}(d^{n}_{i})\to H_{n}$ is the canonical inclusion. In particular, if $n=2$, $f^{2}:=f^{2}_{1}\circ f^{2}_{0}:H_{2}\to M(\mathcal{H})_{2}$, where $f^{2}_{0}:H_{2}\to\mathrm{Hker}(d^{2}_{0})$, $x\mapsto x_{1}s^{2}_{0}d^{2}_{0}(S(x_{2}))$ and $f^{2}_{1}:H_{2}\to\mathrm{Hker}(d^{2}_{1})$, $x\mapsto x_{1}s^{2}_{1}d^{2}_{1}(S(x_{2}))$. We will use the morphism $f^{2}$ in the following.
\end{remark}

Note that, given a simplicial cocommutative color Hopf algebra $\mathcal{H}$ as in \eqref{simplicolor}, we can obtain a new simplicial cocommutative color Hopf algebra by considering kernels, where the first three components are given by
\begin{equation}\label{simplicolor2}
\begin{tikzcd}
	\mathrm{Hker}(d^{3}_{0})\subseteq H_{3} & \mathrm{Hker}(d^{2}_{0})\subseteq H_{2} & \mathrm{Hker}(d^{1}_{0})\subseteq H_{1}
	\arrow["d_{1}"{description}, from=1-2, to=1-3]
	\arrow[shift left=2, from=1-2, to=1-3,"d_{2}"]
	\arrow[shift left=2, from=1-3, to=1-2,"s_{1}"]
	\arrow["d_{1}"{description}, from=1-1, to=1-2]
	\arrow["d_{2}"{description}, shift left=3, from=1-1, to=1-2]
	\arrow[shift left=5, from=1-1, to=1-2,"d_{3}"]
	\arrow["s_{1}"{description}, shift left=3, from=1-2, to=1-1]
	\arrow[shift left=5, from=1-2, to=1-1, "s_{2}"]
\end{tikzcd}
\end{equation}
and one can repeat the same process taking kernels in \eqref{simplicolor2} and obtaining a new simplicial cocommutative color Hopf algebra with first three components
\begin{equation}\label{simplicolor3}
\begin{tikzcd}
	\mathrm{Hker}(d^{4}_{0})\cap\mathrm{Hker}(d^{4}_{1})\subseteq H_{4} & \mathrm{Hker}(d^{3}_{0})\cap\mathrm{Hker}(d^{3}_{1})\subseteq H_{3} & \mathrm{Hker}(d^{2}_{0})\cap\mathrm{Hker}(d^{2}_{1})\subseteq H_{2}.
	\arrow["d_{2}"{description}, from=1-2, to=1-3]
	\arrow[shift left=2, from=1-2, to=1-3, "d_{3}"]
	\arrow[shift left=2, from=1-3, to=1-2, "s_{2}"]
	\arrow["d_{2}"{description}, from=1-1, to=1-2]
	\arrow["d_{3}"{description}, shift left=3, from=1-1, to=1-2]
	\arrow[shift left=5, from=1-1, to=1-2, "d_{4}"]
	\arrow["s_{2}"{description}, shift left=3, from=1-2, to=1-1]
	\arrow[shift left=5, from=1-2, to=1-1, "s_{3}"]
\end{tikzcd}
\end{equation}
By applying Proposition \ref{prop:fiso2} to \eqref{simplicolor} we obtain
\[
H_{1}\cong\mathrm{Hker}(d^{1}_{0})\rtimes_{0}H_{0}=M(\mathcal{H})_{1}\rtimes_{0}M(\mathcal{H})_{0}
\]
and 
\[
H_{2}\cong\mathrm{Hker}(d^{2}_{0})\rtimes_{0} H_{1}\cong\mathrm{Hker}(d^{2}_{0})\rtimes_{0}(M(\mathcal{H})_{1}\rtimes_{0}M(\mathcal{H})_{0}).
\]
Moreover, if we apply Proposition \ref{prop:fiso2} to \eqref{simplicolor2} we further obtain 
\[
\begin{split}
    \mathrm{Hker}(d^{2}_{0})\cong\mathrm{Hker}(d^{2}_{1})|_{\mathrm{Hker}(d^{2}_{0})}\rtimes_{1}\mathrm{Hker}(d^{1}_{0})=(\mathrm{Hker}(d^{2}_{1})\cap\mathrm{Hker}(d^{2}_{0}))\rtimes_{1}\mathrm{Hker}(d^{1}_{0})=(M(\mathcal{H})_{2}\rtimes_{1} M(\mathcal{H})_{1})
\end{split}
\]
and then
\[
\begin{split}
H_{2}&\cong
(M(\mathcal{H})_{2}\rtimes_{1} M(\mathcal{H})_{1})\rtimes_{0}(M(\mathcal{H})_{1}\rtimes_{0} M(\mathcal{H})_{0}).
\end{split}
\]
Let us also analyze the case of $H_{3}$, using \eqref{simplicolor2} and \eqref{simplicolor3}. Since 
\[
\begin{split}
\mathrm{Hker}(d^{3}_{0})\cap\mathrm{Hker}(d^{3}_{1})&\cong(\mathrm{Hker}(d^{3}_{2})|_{(\mathrm{Hker}(d^{3}_{0})\cap\mathrm{Hker}(d^{3}_{1}))})\rtimes_{2}(\mathrm{Hker}(d^{2}_{0})\cap\mathrm{Hker}(d^{2}_{1}))\\&=(\mathrm{Hker}(d^{3}_{0})\cap\mathrm{Hker}(d^{3}_{1})\cap\mathrm{Hker}(d^{3}_{2}))\rtimes_{2}(\mathrm{Hker}(d^{2}_{0})\cap\mathrm{Hker}(d^{2}_{1}))\\&=M(\mathcal{H})_{3}\rtimes_{2}M(\mathcal{H})_{2},
\end{split}
\]
we obtain
\[
\begin{split}
\mathrm{Hker}(d^{3}_{0})&\cong(\mathrm{Hker}(d^{3}_{1})|_{\mathrm{Hker}(d^{3}_{0})})\rtimes_{1}\mathrm{Hker}(d^{2}_{0})=(\mathrm{Hker}(d^{3}_{0})\cap\mathrm{Hker}(d^{3}_{1}))\rtimes_{1}\mathrm{Hker}(d^{2}_{0})\\&\cong
(M(\mathcal{H})_{3}\rtimes_{2}M(\mathcal{H})_{2})\rtimes_{1}(M(\mathcal{H})_{2}\rtimes_{1} M(\mathcal{H})_{1})
\end{split}
\]
and then that
\[
H_{3}\cong\mathrm{Hker}(d^{3}_{0})\rtimes_{0} H_{2}\cong((M(\mathcal{H})_{3}\rtimes_{2} M(\mathcal{H})_{2})\rtimes_{1}(M(\mathcal{H})_{2}\rtimes_{1} M(\mathcal{H})_{1}))\rtimes_{0}((M(\mathcal{H})_{2}\rtimes_{1} M(\mathcal{H})_{1})\rtimes_{0}(M(\mathcal{H})_{1}\rtimes_{0} M(\mathcal{H})_{0})).
\]
Clearly we can iterate this process arriving at a decomposition of $H_{n}$ for all $n\geq0$, analogously to \cite[Theorem 3.18]{Emir}.

In order to show the equivalence between color Hopf crossed modules and simplicial cocommutative color Hopf algebras with Moore complex of length one we prove some intermediate results.

\begin{lemma}\label{mapf2}
Given $x\in H_{1}$ and $y\in M(\mathcal{H})_{1}$ we obtain 
\[
M(\mathcal{H})_{2}\ni f^{2}(s^{2}_{0}(x)\triangleright s^{2}_{1}(y))=\phi(|x_{2}|,|y_{1}|)(s^{2}_{0}(x_{1})\triangleright s^{2}_{1}(y_{1}))S(s^{2}_{1}(x_{2})\triangleright s^{2}_{1}(y_{2}))
\]
\end{lemma}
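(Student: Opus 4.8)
The plan is to abbreviate $z:=s^{2}_{0}(x)\triangleright s^{2}_{1}(y)\in H_{2}$ and to compute $f^{2}(z)=f^{2}_{1}\bigl(f^{2}_{0}(z)\bigr)$. Since $s^{2}_{0}$ and $s^{2}_{1}$ are coalgebra maps, \eqref{xiofcoalg} gives
\[
\Delta(z)=\phi(|x_{2}|,|y_{1}|)\,\bigl(s^{2}_{0}(x_{1})\triangleright s^{2}_{1}(y_{1})\bigr)\otimes\bigl(s^{2}_{0}(x_{2})\triangleright s^{2}_{1}(y_{2})\bigr),
\]
the identity I would feed into both $f^{2}_{0}$ and $f^{2}_{1}$. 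The first step is to show $f^{2}_{0}(z)=z$, i.e. that $z\in\mathrm{Hker}(d^{2}_{0})$. Here I would first check $s^{2}_{1}(y)\in\mathrm{Hker}(d^{2}_{0})$: applying $\mathrm{Id}\otimes d^{2}_{0}$ to $\Delta\bigl(s^{2}_{1}(y)\bigr)=s^{2}_{1}(y_{1})\otimes s^{2}_{1}(y_{2})$ and using the simplicial identity $d^{2}_{0}\circ s^{2}_{1}=s^{1}_{0}\circ d^{1}_{0}$ gives $s^{2}_{1}(y_{1})\otimes s^{1}_{0}\bigl(d^{1}_{0}(y_{2})\bigr)=s^{2}_{1}(y)\otimes 1$ because $y\in M(\mathcal{H})_{1}=\mathrm{Hker}(d^{1}_{0})$. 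As $\mathrm{Hker}(d^{2}_{0})$ is a kernel in $\Hc$, it is a normal color Hopf subalgebra of $H_{2}$ and hence stable under $\triangleright$; so $z=s^{2}_{0}(x)\triangleright s^{2}_{1}(y)\in\mathrm{Hker}(d^{2}_{0})$, and $f^{2}_{0}$ restricts to the identity there by Proposition \ref{prop:fiso2}. Consequently $f^{2}(z)=f^{2}_{1}(z)$.

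It then remains to compute $f^{2}_{1}(z)=z_{1}\,s^{2}_{1}d^{2}_{1}\bigl(S(z_{2})\bigr)$ from the formula for $\Delta(z)$ above. Using \eqref{xiantipode}, the fact that $d^{2}_{1}$ and $s^{2}_{1}$ are morphisms of color Hopf algebras (hence commute with $\triangleright$ and with $S$), and the ``middle'' simplicial identities $d^{2}_{1}\circ s^{2}_{0}=d^{2}_{1}\circ s^{2}_{1}=\mathrm{Id}$, one obtains successively
\[
S(z_{2})=s^{2}_{0}(x_{2})\triangleright s^{2}_{1}\bigl(S(y_{2})\bigr),\quad d^{2}_{1}\bigl(S(z_{2})\bigr)=x_{2}\triangleright S(y_{2}),\quad s^{2}_{1}d^{2}_{1}\bigl(S(z_{2})\bigr)=S\bigl(s^{2}_{1}(x_{2})\triangleright s^{2}_{1}(y_{2})\bigr).
\]
Substituting this together with the value of $z_{1}$ from $\Delta(z)$ into $f^{2}_{1}(z)=z_{1}\,s^{2}_{1}d^{2}_{1}\bigl(S(z_{2})\bigr)$ produces exactly $\phi(|x_{2}|,|y_{1}|)\bigl(s^{2}_{0}(x_{1})\triangleright s^{2}_{1}(y_{1})\bigr)S\bigl(s^{2}_{1}(x_{2})\triangleright s^{2}_{1}(y_{2})\bigr)$, the asserted identity. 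The membership $f^{2}(z)\in M(\mathcal{H})_{2}$ needs no separate argument, since by construction $f^{2}=f^{2}_{1}\circ f^{2}_{0}$ has codomain $\mathrm{Hker}(d^{2}_{0})\cap\mathrm{Hker}(d^{2}_{1})=M(\mathcal{H})_{2}$.

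The argument is essentially bookkeeping, and the main obstacle is keeping it error-free: one must track the Sweedler components of $x$ and $y$ through the chain of $\triangleright$, $S$, $d^{2}_{i}$ and $s^{2}_{i}$, verify that the bicharacter factors combine correctly, and invoke the right simplicial identity for each composite $d^{2}_{i}\circ s^{2}_{j}$ — in particular $d^{2}_{1}\circ s^{2}_{0}=\mathrm{Id}$, which is the most error-prone point. One can equally establish $f^{2}_{0}(z)=z$ by a direct Sweedler computation: recombine the two $\triangleright$-terms of $\Delta(z)$ via \eqref{xiDelta} so that the two copies of $\phi(|x_{2}|,|y_{1}|)$ cancel before the $y$-components are merged, and then recognise the result as $s^{2}_{0}(x)\triangleright s^{2}_{1}\bigl(y_{1}\,s^{1}_{0}d^{1}_{0}S(y_{2})\bigr)=s^{2}_{0}(x)\triangleright s^{2}_{1}\bigl(f^{1}_{0}(y)\bigr)=z$, using $f^{1}_{0}(y)=y$ for $y\in\mathrm{Hker}(d^{1}_{0})$ and the identity $s^{2}_{0}\circ s^{1}_{0}=s^{2}_{1}\circ s^{1}_{0}$.
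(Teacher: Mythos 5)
Your proposal is correct, and its second half (the computation of $f^{2}_{1}(z)$ via \eqref{xiofcoalg}, \eqref{xiantipode} and the simplicial identities $d^{2}_{1}\circ s^{2}_{0}=d^{2}_{1}\circ s^{2}_{1}=\mathrm{Id}$) coincides with the paper's argument. Where you diverge is the first step: the paper establishes $f^{2}_{0}\bigl(s^{2}_{0}(x)\triangleright s^{2}_{1}(y)\bigr)=s^{2}_{0}(x)\triangleright s^{2}_{1}(y)$ by a direct Sweedler computation, expanding $z_{1}s^{2}_{0}d^{2}_{0}(S(z_{2}))$ and recombining with \eqref{xiofcoalg}, $d^{2}_{0}\circ s^{2}_{1}=s^{1}_{0}\circ d^{1}_{0}$, the Hopf-kernel condition on $y$, \eqref{xiantipode} and \eqref{xiDelta} --- essentially the alternative you sketch in your last paragraph. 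You instead argue structurally: $s^{2}_{1}(y)\in\mathrm{Hker}(d^{2}_{0})$, the Hopf kernel is a normal color Hopf subalgebra and hence closed under $\triangleright$ (this is exactly the characterization via \cite[Corollary 5.21]{As} used throughout the paper), so $z\in\mathrm{Hker}(d^{2}_{0})$ and $f^{2}_{0}$ fixes it. This is cleaner, since it avoids tracking the bicharacter factors through the recombination; the only caveat is that ``$f^{2}_{0}$ restricts to the identity on $\mathrm{Hker}(d^{2}_{0})$'' is not literally the statement of Proposition \ref{prop:fiso2} --- it follows in one line from the formula $f^{2}_{0}(h)=h_{1}s^{2}_{0}d^{2}_{0}(S(h_{2}))$ together with the Hopf-kernel condition (equivalently from $g\circ f=\mathrm{Id}$ in Proposition \ref{prop:fiso}), so you should spell out that half-line. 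Your handling of the membership $f^{2}(z)\in M(\mathcal{H})_{2}$ via the codomain of $f^{2}=f^{2}_{1}\circ f^{2}_{0}$ matches the paper's reliance on the remark following Proposition \ref{prop:fiso2}.
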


\begin{proof}
Given $x\in H_{1}$ and $y\in M(\mathcal{H})_{1}$ we can compute
\[
\begin{split}
    f^{2}_{0}(s^{2}_{0}(x)\triangleright s^{2}_{1}(y))&=(s^{2}_{0}(x)\triangleright s^{2}_{1}(y))_{1}s^{2}_{0}d^{2}_{0}(S((s^{2}_{0}(x)\triangleright s^{2}_{1}(y))_{2}))\\&\overset{\eqref{xiofcoalg}}{=}\phi(|x_{2}|,|y_{1}|)(s^{2}_{0}(x_{1})\triangleright s^{2}_{1}(y_{1}))s^{2}_{0}d^{2}_{0}(S(s^{2}_{0}(x_{2})\triangleright s^{2}_{1}(y_{2})))\\&=\phi(|x_{2}|,|y_{1}|)(s^{2}_{0}(x_{1})\triangleright s^{2}_{1}(y_{1}))S(s^{2}_{0}d^{2}_{0}(s^{2}_{0}(x_{2})\triangleright s^{2}_{1}(y_{2})))\\&=\phi(|x_{2}|,|y_{1}|)(s^{2}_{0}(x_{1})\triangleright s^{2}_{1}(y_{1}))S(s^{2}_{0}d^{2}_{0}s^{2}_{0}(x_{2})\triangleright s^{2}_{0}d^{2}_{0}s^{2}_{1}(y_{2}))\\&=\phi(|x_{2}|,|y_{1}|)(s^{2}_{0}(x_{1})\triangleright s^{2}_{1}(y_{1}))S(s^{2}_{0}(x_{2})\triangleright s^{2}_{0}s^{1}_{0}d^{1}_{0}(y_{2}))\\&\overset{(*)}{=}\phi(|x_{2}|,|y_{1}|)(s^{2}_{0}(x_{1})\triangleright s^{2}_{1}(y_{1}))S(s^{2}_{0}(x_{2})\triangleright\epsilon(y_{2})1_{H_{2}})\\&\overset{\eqref{xiantipode}}{=}\phi(|x_{2}|,|y_{1}|)(s^{2}_{0}(x_{1})\triangleright s^{2}_{1}(y_{1}))(s^{2}_{0}(x_{2})\triangleright\epsilon(y_{2})1_{H_{2}})\\&\overset{\eqref{xiDelta}}{=}s^{2}_{0}(x)\triangleright s^{2}_{1}(y)
\end{split}
\]
where $(*)$ follows since $y\in\mathrm{Hker}(d^{1}_{0})$, i.e., $y_{1}\otimes d^{1}_{0}(y_{2})=y_{1}\otimes\epsilon(y_{2})1_{H_{0}}$ and then 
\[
s^{2}_{1}(y_{1})\otimes s^{2}_{0}s^{1}_{0}d^{1}_{0}(y_{2})=s^{2}_{1}(y_{1})\otimes s^{2}_{0}s^{1}_{0}(\epsilon(y_{2})1_{H_{0}})=s^{2}_{1}(y_{1})\otimes\epsilon(y_{2})1_{H_{2}}.
\]
Hence we obtain
\[
\begin{split}
    f^{2}(s^{2}_{0}(x)\triangleright s^{2}_{1}(y))&=f^{2}_{1}f^{2}_{0}(s^{2}_{0}(x)\triangleright s^{2}_{1}(y))=
    f^{2}_{1}(s^{2}_{0}(x)\triangleright s^{2}_{1}(y))\\&=(s^{2}_{0}(x)\triangleright s^{2}_{1}(y))_{1}s^{2}_{1}d^{2}_{1}(S((s^{2}_{0}(x)\triangleright s^{2}_{1}(y))_{2}))\\&\overset{\eqref{xiofcoalg}}{=}\phi(|x_{2}|,|y_{1}|)(s^{2}_{0}(x_{1})\triangleright s^{2}_{1}(y_{1}))s^{2}_{1}d^{2}_{1}(S(s^{2}_{0}(x_{2})\triangleright s^{2}_{1}(y_{2})))\\&=\phi(|x_{2}|,|y_{1}|)(s^{2}_{0}(x_{1})\triangleright s^{2}_{1}(y_{1}))S(s^{2}_{1}d^{2}_{1}s^{2}_{0}(x_{2})\triangleright s^{2}_{1}d^{2}_{1}s^{2}_{1}(y_{2}))\\&=\phi(|x_{2}|,|y_{1}|)(s^{2}_{0}(x_{1})\triangleright s^{2}_{1}(y_{1}))S(s^{2}_{1}(x_{2})\triangleright s^{2}_{1}(y_{2}))
\end{split}
\]
and then the thesis follows.
\end{proof}

\begin{proposition}\label{functor1}
Let $\mathcal{H}$ be a simplicial cocommutative color Hopf algebra with Moore complex of length one. We obtain the color Hopf crossed module $(H_{0},M(\mathcal{H})_{1}=\mathrm{Hker}(d^{1}_{0}),\partial_{1}:M(\mathcal{H})_{1}\to H_{0})$, where $M(\mathcal{H})_{1}$ is a cocommutative $H_{0}$-module color Hopf algebra through
\[
\cdot:H_{0}\otimes M(\mathcal{H})_{1}\to M(\mathcal{H})_{1},\ k\otimes x\mapsto
\phi(|k_{2}|,|x|)s^{1}_{0}(k_{1})xs^{1}_{0}(S(k_{2}))
\]
and $\partial_{1}:=d^{1}_{1}|_{M(\mathcal{H})_{1}}:M(\mathcal{H})_{1}\to H_{0}$.
\end{proposition}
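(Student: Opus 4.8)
The plan is to verify the two axioms \eqref{cm1} and \eqref{cm2} of Definition \ref{def:colorHopfcrossed} for the triple $(H_{0},M(\mathcal{H})_{1},\partial_{1})$; everything except \eqref{cm2} is essentially formal. By Proposition \ref{prop:fiso2} with $n=1$ and $i=0$, applied to the splitting $d^{1}_{0},s^{1}_{0}$ of color Hopf algebra morphisms, $M(\mathcal{H})_{1}=\mathrm{Hker}(d^{1}_{0})$ is already a cocommutative $H_{0}$-module color Hopf algebra through the stated action, which I will rewrite as $k\cdot x=s^{1}_{0}(k)\triangleright x$ using that $s^{1}_{0}$ is a coalgebra morphism commuting with the antipode. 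Moreover $\partial_{1}=d^{1}_{1}|_{M(\mathcal{H})_{1}}$ is a morphism of color Hopf algebras, being the restriction of $d^{1}_{1}\colon H_{1}\to H_{0}$ to the color Hopf subalgebra $\mathrm{Hker}(d^{1}_{0})$. For \eqref{cm1} I would compute, for $k\in H_{0}$ and $x\in M(\mathcal{H})_{1}$,
\[
\partial_{1}(k\cdot x)=\phi(|k_{2}|,|x|)\,(d^{1}_{1}s^{1}_{0})(k_{1})\,d^{1}_{1}(x)\,(d^{1}_{1}s^{1}_{0})(S(k_{2}))=\phi(|k_{2}|,|x|)\,k_{1}\,\partial_{1}(x)\,S(k_{2})=k\triangleright\partial_{1}(x),
\]
where the middle step uses the simplicial identity $d^{1}_{1}\circ s^{1}_{0}=\mathrm{Id}_{H_{0}}$ (case $i=j+1$).

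The heart of the proof is \eqref{cm2}, i.e.\ $\partial_{1}(g)\cdot x=g\triangleright x$ — equivalently $s^{1}_{0}d^{1}_{1}(g)\triangleright x=g\triangleright x$ — for all $g,x\in M(\mathcal{H})_{1}$, and this is where the length-one hypothesis enters. The plan is to start from Lemma \ref{mapf2} applied with its ``$x$'' equal to $g$ and its ``$y$'' equal to $x$: since the Moore complex has length one, $M(\mathcal{H})_{2}=\Bbbk 1_{H_{2}}$ is the zero object, so $f^{2}(s^{2}_{0}(g)\triangleright s^{2}_{1}(x))$ is a scalar multiple of $1_{H_{2}}$; as $f^{2}$ preserves the counit and $\epsilon(a\triangleright b)=\epsilon(a)\epsilon(b)$, that scalar is $\epsilon(g)\epsilon(x)$, whence
\[
\phi(|g_{2}|,|x_{1}|)\big(s^{2}_{0}(g_{1})\triangleright s^{2}_{1}(x_{1})\big)\,S\big(s^{2}_{1}(g_{2})\triangleright s^{2}_{1}(x_{2})\big)=\epsilon(g)\epsilon(x)1_{H_{2}}.
\]
Next I would apply the color Hopf algebra morphism $d^{2}_{2}$ to this identity, using $d^{2}_{2}(a\triangleright b)=d^{2}_{2}(a)\triangleright d^{2}_{2}(b)$, $d^{2}_{2}\circ S=S\circ d^{2}_{2}$, and the simplicial identities $d^{2}_{2}s^{2}_{0}=s^{1}_{0}d^{1}_{1}$ (case $i>j+1$) and $d^{2}_{2}s^{2}_{1}=\mathrm{Id}_{H_{1}}$ (case $i=j+1$), obtaining the identity in $H_{1}$
\[
\phi(|g_{2}|,|x_{1}|)\big(s^{1}_{0}d^{1}_{1}(g_{1})\triangleright x_{1}\big)\,S\big(g_{2}\triangleright x_{2}\big)=\epsilon(g)\epsilon(x)1_{H_{1}},
\]
which I will call $(\ast)$.

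To finish, I would interpret $(\ast)$ inside the convolution algebra $\mathrm{Hom}_{\mathrm{Vec}_{G}}(D,H_{1})$, where $D=M(\mathcal{H})_{1}\otimes M(\mathcal{H})_{1}$ is a coalgebra in $\mathrm{Vec}_{G}$ (note $\mathrm{Hker}(d^{1}_{0})$ is a subcoalgebra of $H_{1}$): setting $P(g\otimes x)=s^{1}_{0}d^{1}_{1}(g)\triangleright x$ and $Q(g\otimes x)=g\triangleright x$, the identity $(\ast)$ says precisely $P*(S\circ Q)=u_{H_{1}}\epsilon_{D}$, while \eqref{xiofcoalg} together with the antipode axiom for $H_{1}$ give $Q*(S\circ Q)=(S\circ Q)*Q=u_{H_{1}}\epsilon_{D}$, so that $S\circ Q$ is a two-sided convolution inverse of $Q$. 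Cancelling yields $P=P*(S\circ Q)*Q=u_{H_{1}}\epsilon_{D}*Q=Q$, and evaluating at $g\otimes x$ gives $s^{1}_{0}d^{1}_{1}(g)\triangleright x=g\triangleright x$, i.e.\ $\partial_{1}(g)\cdot x=g\triangleright x=\phi(|g_{2}|,|x|)g_{1}xS(g_{2})$, which is \eqref{cm2}.

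The main obstacle is exactly this last part: one must be careful that after applying $d^{2}_{2}$ the ``mixed'' Sweedler indices of Lemma \ref{mapf2} line up so that $(\ast)$ genuinely has the shape of a convolution identity between $P$ and $S\circ Q$, and one must check that $S\circ Q$ inverts $Q$ on both sides before cancelling; the remaining points — that $D$ is a coalgebra, that $f^{2}$ preserves counits, and the bookkeeping of the simplicial identities — are routine.
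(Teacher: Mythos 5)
Your proposal is correct and follows essentially the same route as the paper: Proposition \ref{prop:fiso2} for the action and \eqref{cm1}, then Lemma \ref{mapf2} combined with the length-one hypothesis (so the element of $M(\mathcal{H})_{2}$ equals $\epsilon(g)\epsilon(x)1_{H_{2}}$) and the simplicial identities $d^{2}_{2}s^{2}_{0}=s^{1}_{0}d^{1}_{1}$, $d^{2}_{2}s^{2}_{1}=\mathrm{Id}$ to get your identity $(\ast)$, and finally a cancellation yielding $s^{1}_{0}d^{1}_{1}(g)\triangleright x=g\triangleright x$. The only (cosmetic) difference is in that last step: you cancel abstractly in the convolution algebra $\mathrm{Hom}(M(\mathcal{H})_{1}\otimes M(\mathcal{H})_{1},H_{1})$, using that $g\otimes x\mapsto g\triangleright x$ is a coalgebra map by \eqref{xiofcoalg} so that $S\circ Q$ is its two-sided convolution inverse, whereas the paper carries out the same cancellation by an explicit Sweedler computation via \eqref{xiDelta} and \eqref{xiantipode}.
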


\begin{proof}
    By Proposition \ref{prop:fiso2} we already know that 
    $M(\mathcal{H})_{1}$ is a cocommutative $H_{0}$-module color Hopf algebra through the map $\cdot$ defined, so we only have to show that the morphism of color Hopf algebras $\partial_{1}$ satisfies \eqref{cm1} and \eqref{cm2}. For all $k\in H_{0}$ and $x\in M(\mathcal{H})_{1}$ we can compute
\[
\begin{split}
    \partial_{1}(k\cdot x)&=d^{1}_{1}(\phi(|k_{2}|,|x|)s^{1}_{0}(k_{1})xs^{1}_{0}(S(k_{2})))=\phi(|k_{2}|,|x|)d^{1}_{1}s^{1}_{0}(k_{1})d^{1}_{1}(x)d^{1}_{1}s^{1}_{0}(S(k_{2}))
    =\phi(|k_{2}|,|x|)k_{1}\partial_{1}(x)S(k_{2})
\end{split}
\]
and then \eqref{cm1} is satisfied. Moreover, for all $x,y\in M(\mathcal{H})_{1}$, we have
\[
\begin{split}
    \partial_{1}(x)\cdot y&=d^{1}_{1}(x)\cdot y=\phi(|x_{2}|,|y|)s^{1}_{0}d^{1}_{1}(x_{1})ys^{1}_{0}(S(d^{1}_{1}(x_{2})))
=\phi(|x_{2}|,|y|)d^{2}_{2}s^{2}_{0}(x_{1})yS(d^{2}_{2}s^{2}_{0}(x_{2}))=d^{2}_{2}s^{2}_{0}(x)\triangleright y
\end{split}
\]
so that, in order to prove that also \eqref{cm2} holds true, we have to show $d^{2}_{2}s^{2}_{0}(x)\triangleright y=x\triangleright y$ for all $x,y\in M(\mathcal{H})_{1}$. 
Hence, using Lemma \ref{mapf2}, we obtain
\[
\begin{split}
    d^{2}_{2}f^{2}(s^{2}_{0}(x)\triangleright s^{2}_{1}(y))&=\phi(|x_{2}|,|y_{1}|)d^{2}_{2}(s^{2}_{0}(x_{1})\triangleright s^{2}_{1}(y_{1}))d^{2}_{2}S(s^{2}_{1}(x_{2})\triangleright s^{2}_{1}(y_{2}))\\&=\phi(|x_{2}|,|y_{1}|)(d^{2}_{2}s^{2}_{0}(x_{1})\triangleright d^{2}_{2}s^{2}_{1}(y_{1}))S(d^{2}_{2}s^{2}_{1}(x_{2})\triangleright d^{2}_{2}s^{2}_{1}(y_{2}))\\&=\phi(|x_{2}|,|y_{1}|)(d^{2}_{2}s^{2}_{0}(x_{1})\triangleright y_{1})S(x_{2}\triangleright y_{2})\\&\overset{\eqref{xiantipode}}{=}\phi(|x_{2}|,|y_{1}|)(d^{2}_{2}s^{2}_{0}(x_{1})\triangleright y_{1})(x_{2}\triangleright S(y_{2})).
\end{split}
\]
But we are under the assumption that the Moore complex of $\mathcal{H}$ has length one and then $M(\mathcal{H})_{2}=\Bbbk1_{H_{2}}$, so that $\partial_{2}=d^{2}_{2}|_{M(\mathcal{H})_{2}}:M(\mathcal{H})_{2}\to M(\mathcal{H})_{1}$ is the zero morphism. Therefore, we obtain
\[
\phi(|x_{2}|,|y_{1}|)(d^{2}_{2}s^{2}_{0}(x_{1})\triangleright y_{1})(x_{2}\triangleright S(y_{2}))=\epsilon(x)\epsilon(y)1_{H_{1}}.
\]
Finally, we can compute
\[
\begin{split}
    x\triangleright y&=
\phi(|x_{2}|,|y_{1}|)\epsilon(x_{1})\epsilon(y_{1})(x_{2}\triangleright y_{2})\\&=\phi(|x_{3}|,|y_{1}\otimes y_{2}|)\phi(|x_{2}|,|y_{1}|)(d^{2}_{2}s^{2}_{0}(x_{1})\triangleright y_{1})(x_{2}\triangleright S(y_{2}))(x_{3}\triangleright y_{3})\\&\overset{\eqref{xiDelta}}{=}\phi(|x_{2}|,|y_{1}|)(d^{2}_{2}s^{2}_{0}(x_{1})\triangleright y_{1})(x_{2}\triangleright\epsilon(y_{2})1_{H_{1}})\\&=\phi(|x_{2}|,|y|)(d^{2}_{2}s^{2}_{0}(x_{1})\triangleright y)(x_{2}\triangleright1_{H_{1}})=\phi(|x_{2}|,|y|)(d^{2}_{2}s^{2}_{0}(x_{1})\triangleright y)\epsilon(x_{2})\\&=d^{2}_{2}s^{2}_{0}(x)\triangleright y
\end{split}
\]
and so also \eqref{cm2} is satisfied.
\end{proof}

We can also obtain a simplicial cocommutative color Hopf algebra with Moore complex of length one starting from a color Hopf crossed module. 

\begin{remark}\label{oss:2cosk}
A $2$-truncated simplicial cocommutative color Hopf algebra is given by
\[
\begin{tikzcd}
	H_{2} & H_{1} & H_{0}
	\arrow["d_{0}"{description}, from=1-2, to=1-3]
	\arrow[shift left=2, from=1-2, to=1-3,"d_{1}"]
	\arrow[shift left=2, from=1-3, to=1-2,"s_{0}"]
	\arrow["d_{0}"{description}, from=1-1, to=1-2]
	\arrow["d_{1}"{description}, shift left=3, from=1-1, to=1-2]
	\arrow[shift left=5, from=1-1, to=1-2,"d_{2}"]
	\arrow["s_{0}"{description}, shift left=3, from=1-2, to=1-1]
	\arrow[shift left=5, from=1-2, to=1-1, "s_{1}"]
\end{tikzcd}
\]
where $H_{0}$, $H_{1}$ and $H_{2}$ are cocommutative color Hopf algebras and the maps $d_{i}$, $s_{i}$ satisfy the simplicial identities. Recall that from it we can obtain a simplicial cocommutative color Hopf algebra by using the 2-coskeleton functor, as explained in the preliminary section.
\end{remark}

\begin{proposition}\label{functor2}
    Given a color Hopf crossed module $(A,H,d:H\to A)$ we obtain a 2-truncated simplicial cocommutative color Hopf algebra
\begin{equation}\label{simplicolorprop}
\begin{tikzcd}
	H\rtimes_{*}(H\rtimes A) & H\rtimes A & A
	\arrow["d_{0}"{description}, from=1-2, to=1-3]
	\arrow[shift left=2, from=1-2, to=1-3,"d_{1}"]
	\arrow[shift left=2, from=1-3, to=1-2,"s_{0}"]
	\arrow["d_{0}"{description}, from=1-1, to=1-2]
	\arrow["d_{1}"{description}, shift left=3, from=1-1, to=1-2]
	\arrow[shift left=5, from=1-1, to=1-2,"d_{2}"]
	\arrow["s_{0}"{description}, shift left=3, from=1-2, to=1-1]
	\arrow[shift left=5, from=1-2, to=1-1, "s_{1}"]
\end{tikzcd}
\end{equation}
and then, as said in Remark \ref{oss:2cosk}, a simplicial cocommutative color Hopf algebra. The latter has Moore complex of length one.
\end{proposition}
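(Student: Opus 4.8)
The plan is to realise the simplicial object as (the $2$-coskeleton of the $2$-truncation of) the nerve of the internal groupoid that Proposition \ref{eqcat} attaches to $(A,H,d)$. Set $H_{0}:=A$ and $H_{1}:=H\rtimes A$, and recall from the proof of Proposition \ref{eqcat} that the functor $G$ sends $(A,H,d)$ to the reflexive–multiplicative graph $H\rtimes A\rightrightarrows A$ with source $d^{1}_{0}:=p_{2}$, target $d^{1}_{1}:=p_{1}=m_{A}\circ(d\otimes\mathrm{Id}_{A})$, identity $s^{1}_{0}:=\iota_{2}$ and composition $m$, which underlies an internal groupoid in $\Hc$ by Lemma \ref{lemmarefgraph}. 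The color Hopf subalgebra $H\cong H\otimes 1_{A}=\mathrm{Hker}(p_{2})$ is normal in $H_{1}$, so by Section 3 the conjugation $\xi_{H_{1}}$ restricts to an action $*\colon H_{1}\otimes H\to H$ making $H$ a cocommutative $H_{1}$-module color Hopf algebra (combining the $A$-action $\cdot$ with the inner action of $H$ on itself), and I put $H_{2}:=H\rtimes_{*}H_{1}=H\rtimes_{*}(H\rtimes A)$. Applying Proposition \ref{prop:fiso} to the split epimorphism $\pi_{2}\colon H_{1}\times_{H_{0}}H_{1}\to H_{1}$, split by $\langle\iota_{2}p_{1},\mathrm{Id}\rangle$, whose Hopf kernel one computes to be isomorphic to $H$, identifies $H_{2}$ with the object of composable pairs $H_{1}\times_{H_{0}}H_{1}$; transporting the nerve structure equips \eqref{simplicolorprop} with faces $d^{2}_{0}:=\pi_{2}$, $d^{2}_{1}:=m$, $d^{2}_{2}:=\pi_{1}$ and degeneracies $s^{2}_{0}:=\langle\iota_{2}p_{1},\mathrm{Id}\rangle$, $s^{2}_{1}:=\langle\mathrm{Id},\iota_{2}p_{2}\rangle$, all of which are morphisms in $\Hc$ since the projections, the composition $m$ (a color Hopf algebra map by the proof of Proposition \ref{eqcat}, using \eqref{cm1} and \eqref{cm2}) and the reflexivity maps are.

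Next I would verify the simplicial identities in degrees $\le 2$, so that \eqref{simplicolorprop} is a $2$-truncated simplicial object. Written out, each such identity reduces to one of: $d^{1}_{0}s^{1}_{0}=d^{1}_{1}s^{1}_{0}=\mathrm{Id}$ (reflexive graph), $d^{1}_{0}d^{2}_{2}=d^{1}_{1}d^{2}_{0}$ (the defining equation of the pullback $H_{1}\times_{H_{0}}H_{1}$), $m\circ\langle\iota_{2}p_{1},\mathrm{Id}\rangle=m\circ\langle\mathrm{Id},\iota_{2}p_{2}\rangle=\mathrm{Id}$ (multiplicative graph) and $d^{1}_{0}m=d^{1}_{0}\pi_{2}$, $d^{1}_{1}m=d^{1}_{1}\pi_{1}$ (internal category); the identities among degeneracies reduce similarly via $\iota_{2}p_{1}$, $\iota_{2}p_{2}$ and the universal property of $H_{1}\times_{H_{0}}H_{1}$. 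All of these hold for $G(A,H,d)$ by Proposition \ref{eqcat} and Lemma \ref{lemmarefgraph}, and associativity of $m$ plays no role at this truncation level. Since $\Hc$ has finite limits, applying the $2$-coskeleton functor (Remark \ref{oss:2cosk}) then produces a simplicial cocommutative color Hopf algebra $\mathcal{H}$ with $\mathcal{H}=\mathrm{cosk}_{2}\mathrm{tr}_{2}(\mathcal{H})$.

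Finally I would compute the Moore complex. One has $M(\mathcal{H})_{0}=H_{0}=A$, and $M(\mathcal{H})_{1}=\mathrm{Hker}(d^{1}_{0})=\mathrm{Hker}(p_{2})=H\otimes 1_{A}\cong H$ with $\partial_{1}=d^{1}_{1}|_{M(\mathcal{H})_{1}}=p_{1}|_{H\otimes 1_{A}}=d$, so the crossed module is recovered in low degrees. For degree $2$, a short computation with the comultiplication of $H_{2}=H\rtimes_{*}H_{1}$ gives $\mathrm{Hker}(d^{2}_{0})=\mathrm{Hker}(\pi_{2})=H\otimes 1_{H_{1}}$ (the first tensor factor), and $d^{2}_{1}=m$ restricted to it is the canonical monomorphism $H\hookrightarrow H_{1}$, $x\otimes 1_{H_{1}}\mapsto x\otimes 1_{A}$; hence the only elements of $\mathrm{Hker}(d^{2}_{0})$ also lying in $\mathrm{Hker}(d^{2}_{1})$ are the scalar multiples of $1_{H_{2}}$, i.e. $M(\mathcal{H})_{2}=\mathrm{Hker}(d^{2}_{0})\cap\mathrm{Hker}(d^{2}_{1})=\Bbbk 1_{H_{2}}$. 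Because $\mathcal{H}=\mathrm{cosk}_{2}\mathrm{tr}_{2}(\mathcal{H})$, Corollary \ref{cor:simpobj} then yields $M(\mathcal{H})_{3}=\mathrm{Ker}(\partial_{2}\colon M(\mathcal{H})_{2}\to M(\mathcal{H})_{1})=\Bbbk$ and $M(\mathcal{H})_{i}=\Bbbk$ for $i>3$; thus $M(\mathcal{H})_{i}=\Bbbk$ for all $i\ge 2$ and the Moore complex has length one.

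The main obstacle is the second step: checking that the six structure maps are morphisms in $\Hc$ and satisfy the simplicial identities in degrees $\le 2$ involves the braiding/$\phi$-bookkeeping and is exactly where the crossed-module relations \eqref{cm1} and \eqref{cm2} are really used, through Proposition \ref{eqcat}. The identification $\mathrm{Hker}(\pi_{2})\cong H$ inside $H\rtimes_{*}H_{1}$ also requires some care. Once these are in place, however, the length-one conclusion follows immediately from Corollary \ref{cor:simpobj}, so the essential content of the statement is concentrated in the equality $M(\mathcal{H})_{2}=\Bbbk 1_{H_{2}}$.
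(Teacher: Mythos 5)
Your argument is correct, but it follows a genuinely different route from the paper. The paper proceeds entirely by explicit formulas: it defines the action $*\colon(H\rtimes A)\otimes H\to H$, $(h\otimes a)\otimes h'\mapsto(d(h)a)\cdot h'$, checks by hand (using \eqref{cm1}) that this is an action making $H$ an $(H\rtimes A)$-module color Hopf algebra, writes down $d^{2}_{0},d^{2}_{1},d^{2}_{2},s^{2}_{0},s^{2}_{1}$ on $H\rtimes_{*}(H\rtimes A)$ in coordinates, verifies (using \eqref{cm2}) that $d^{2}_{1}=m_{H}\otimes\mathrm{Id}_{A}$ is an algebra map and that all simplicial identities hold, and then computes $\mathrm{Hker}(d^{2}_{0})=H\otimes\Bbbk(1_{H}\otimes1_{A})$ and directly that $M(\mathcal{H})_{2}=\Bbbk1_{H_{2}}$ before invoking the coskeleton result. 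You instead recycle Proposition \ref{eqcat} and Lemma \ref{lemmarefgraph}: the crossed module gives a reflexive-multiplicative graph, hence an internal category/groupoid, and its $2$-truncated nerve (composable pairs, $\pi_{2},m,\pi_{1}$, and the two canonical degeneracies) satisfies the degree-$\le2$ simplicial identities for free, the crossed-module axioms entering only through Proposition \ref{eqcat}; the Moore-complex step is then the same as the paper's, with a slightly cleaner argument for $M(\mathcal{H})_{2}=\Bbbk1$ (injectivity of $d^{2}_{1}$ on $\mathrm{Hker}(d^{2}_{0})$) and the identical appeal to Corollary \ref{cor:simpobj}. What your approach buys is a conceptual explanation of the simplicial identities and less $\phi$-bookkeeping; what it defers — and what you would still have to write out to literally obtain the object named in the statement — is the identification $H_{1}\times_{H_{0}}H_{1}\cong H\rtimes_{*}(H\rtimes A)$: computing $\mathrm{Hker}(\pi_{2})\cong H$ inside the pullback, checking that the conjugation action induced by the splitting $\langle\iota_{2}p_{1},\mathrm{Id}\rangle$ (equivalently, by \eqref{cm2}, your restricted conjugation action of $H_{1}$ on $H\otimes1_{A}$) is exactly $*$, and that the transported faces and degeneracies are the explicit ones $d^{2}_{0}(h\otimes h'\otimes a)=\epsilon(h)h'\otimes a$, $d^{2}_{1}=hh'\otimes a$, $d^{2}_{2}=h\otimes d(h')a$, $s^{2}_{0}=1_{H}\otimes h\otimes a$, $s^{2}_{1}=h\otimes1_{H}\otimes a$ (they are); these verifications are of the same nature as the computations the paper performs directly, so they are a matter of filling in rather than a gap in the method.
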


\begin{proof}
Since $H$ is a cocommutative $A$-module color Hopf algebra we can make the semi-direct product $H\rtimes A$. Denote the $A$-action of $H$ by $\cdot:A\otimes H\to H$ and define $H_{0}:=A$ and $H_{1}:=H\rtimes A$. Moreover, define morphisms $d^{1}_{0},d^{1}_{1}:H_{1}\to H_{0}$ and $s^{1}_{0}:H_{0}\to H_{1}$ in $\Hc$ as 
\[
d^{1}_{0}(h\otimes a):=\epsilon(h)a,\ d^{1}_{1}(h\otimes a):=d(h)a,\ s^{1}_{0}(a):=1_{H}\otimes a,
\]
so that, clearly, $d^{1}_{0}\circ s^{1}_{0}=d^{1}_{1}\circ s^{1}_{0}=\mathrm{Id}_{A}$. Note that these morphisms are exactly those defined in \eqref{graphcrossed}. We can define a $(H\rtimes A)$-action on $H$ as
\[
*:(H\rtimes A)\otimes H\to H,\ (h\otimes a)\otimes h'\mapsto(d(h)a)\cdot h',
\]
which makes $H$ a cocommutative $(H\rtimes A)$-module color Hopf algebra.
Indeed, the morphism $*$ is an action since $(1_{H}\otimes 1_{A})* h'=(d(1_{H})1_{A})\cdot h'=1_{A}\cdot h'=h'$
and, since $d$ is a morphism of algebras, we also have 
\[
\begin{split}
((h'\otimes a')(h\otimes a))* h''&=(\phi(|a'_{2}|,|h|)h'(a'_{1}\cdot h)\otimes a'_{2}a)* h''=(d(\phi(|a'_{2}|,|h|)h'(a'_{1}\cdot h))a'_{2}a)\cdot h''\\&=(\phi(|a'_{2}|,|h|)d(h')d(a'_{1}\cdot h)a'_{2}a)\cdot h''\overset{\eqref{cm1}}{=}(\phi(|a'_{3}|,|h|)\phi(|a'_{2}|,|h|)d(h')a'_{1}d(h)S(a'_{2})a'_{3}a)\cdot h''\\&=(\phi(|a'_{2}|,|h|)d(h')a'_{1}d(h)\epsilon(a'_{2})a)\cdot h''=(d(h')a'd(h)a)\cdot h''=(d(h')a')\cdot((d(h)a)\cdot h'')\\&=
(h'\otimes a')*((h\otimes a)* h'').
\end{split}
\]
Moreover, clearly $*$ makes $H$ a cocommutative $(H\rtimes A)$-module color Hopf algebra since the compatibility conditions are satisfied by the action $\cdot:A\otimes H\to H$.

\begin{invisible}
Indeed
\[
(h\otimes a)*1_{H}=d(h)a\cdot1_{H}=\epsilon(d(h)a)1_{H}=\epsilon(h)\epsilon(a)1_{H}=\epsilon_{H\rtimes A}(h\otimes a)1_{H}
\]
and
\[
\epsilon((h\otimes a)* h')=\epsilon(d(h)a\cdot h')=\epsilon(d(h)a)\epsilon(h')=\epsilon_{H\rtimes A}(h\otimes a)\epsilon(h').
\]
Furthermore, also
\[
\begin{split}
\Delta((h\otimes a)* h')&=\Delta(d(h)a\cdot h')=\phi(|(d(h)a)_{2}|,|h'_{1}|)((d(h)a)_{1}\cdot h'_{1})\otimes((d(h)a)_{2}\cdot h'_{2})\\&=\phi(|h_{2}a_{2}|,|h'_{1}|)\phi(|h_{2}|,|a_{1}|)(d(h_{1})a_{1}\cdot h'_{1})\otimes(d(h_{2})a_{2}\cdot h'_{2})\\&=\phi(|(h\otimes a)_{2}|,|h'_{1}|)((h\otimes a)_{1}* h'_{1})((h\otimes a)_{2}* h'_{2})
\end{split}
\]
and 
\[
\begin{split}
(h\otimes a)*(h'h'')&=d(h)a\cdot h'h''=\phi(|(d(h)a)_{2}|,|h'|)((d(h)a)_{1}\cdot h')((d(h)a)_{2}\cdot h'')\\&=\phi(|h_{2}a_{2}|,|h'|)\phi(|h_{2}|,|a_{1}|)(d(h_{1})a_{1}\cdot h')(d(h_{2})a_{2}\cdot h'')\\&=\phi(|(h\otimes a)_{2}|,|h|)((h\otimes a)_{1}* h)((h\otimes a)_{2}* h')
\end{split}
\]
\end{invisible}
Hence we can define $H_{2}:=H\rtimes_{*}(H\rtimes A)$ and morphisms $d^{2}_{0},d^{2}_{1},d^{2}_{2}:H_{2}\to H_{1}$ in $\Hc$ as
\[
d^{2}_{0}(h\otimes h'\otimes a):=\epsilon(h)h'\otimes a,\ d^{2}_{1}(h\otimes h'\otimes a):=hh'\otimes a,\ d^{2}_{2}(h\otimes h'\otimes a):=h\otimes d(h')a 
\]
and morphisms $s^{2}_{0},s^{2}_{1}:H_{1}\to H_{2}$ as
\[
s^{2}_{0}(h\otimes a):=1_{H}\otimes h\otimes a,\ s^{2}_{1}(h\otimes a):=h\otimes1_{H}\otimes a.
\]
Observe that $d^{2}_{1}=m_{H}\otimes\mathrm{Id}_{A}$, which is clearly a morphism of graded coalgebras since $H\rtimes_{*}(H\rtimes A)$ has the tensor product coalgebra structure, is also a morphism of algebras. Indeed, given $h\otimes h'\otimes a$ and $k\otimes k'\otimes a'$ in $H\rtimes_{*}(H\rtimes A)$, we can compute 
\[
\begin{split}
    d^{2}_{1}&((h\otimes h'\otimes a)(k\otimes k'\otimes a'))=d^{2}_{1}(\phi(|(h'\otimes a)_{2}|,|k|)h((h'\otimes a)_{1}*k)\otimes(h'\otimes a)_{2}(k'\otimes a'))\\&=d^{2}_{1}(\phi(|h'_{2}\otimes a_{2}|,|k|)\phi(|h'_{2}|,|a_{1}|)h((h'_{1}\otimes a_{1})*k)\otimes(h'_{2}\otimes a_{2})(k'\otimes a'))\\&=d^{2}_{1}(\phi(|h'_{2}|,|a_{1}\cdot k|)\phi(|a_{2}\otimes a_{3}|,|k|)\phi(|a_{3}|,|k'|)h((d(h'_{1})a_{1})\cdot k)\otimes h'_{2}(a_{2}\cdot k')\otimes a_{3}a')\\&=\phi(|h'_{2}|,|a_{1}\cdot k|)\phi(|a_{2}\otimes a_{3}|,|k|)\phi(|a_{3}|,|k'|)h(d(h'_{1})\cdot(a_{1}\cdot k))h'_{2}(a_{2}\cdot k')\otimes a_{3}a'\\&\overset{\eqref{cm2}}{=}\phi(|h'_{2}\otimes h'_{3}|,|a_{1}\cdot k|)\phi(|a_{2}\otimes a_{3}|,|k|)\phi(|a_{3}|,|k'|)hh'_{1}(a_{1}\cdot k)S(h'_{2})h'_{3}(a_{2}\cdot k')\otimes a_{3}a'\\&=\phi(|h'_{2}|,|a_{1}\cdot k|)\phi(|a_{2}\otimes a_{3}|,|k|)\phi(|a_{3}|,|k'|)hh'_{1}(a_{1}\cdot k)\epsilon(h'_{2})(a_{2}\cdot k')\otimes a_{3}a'\\&=\phi(|a_{2}|,|k|)\phi(|a_{3}|,|kk'|)hh'(a_{1}\cdot k)(a_{2}\cdot k')\otimes a_{3}a'=\phi(|a_{2}|,|kk'|)hh'(a_{1}\cdot kk')\otimes a_{2}a'\\&=(hh'\otimes a)(kk'\otimes a')=d^{2}_{1}(h\otimes h'\otimes a)d^{2}_{1}(k\otimes k'\otimes a')
\end{split}
\]
and $d^{2}_{2}=\mathrm{Id}_{H}\otimes d^{1}_{1}$ is a morphism in $\Hc$. Moreover, the following relations 
\[
d^{2}_{0}\circ s^{2}_{0}=d^{2}_{1}\circ s^{2}_{0}=d^{2}_{1}\circ s^{2}_{1}=d^{2}_{2}\circ s^{2}_{1}=\mathrm{Id}_{H_{1}}
\]
are trivially satisfied as well as $s^{2}_{1}\circ s^{1}_{0}=s^{2}_{0}\circ s^{1}_{0}$. Furthermore, we have  
\[
d^{2}_{0}s^{2}_{1}(h\otimes a)=\epsilon(h)1_{H}\otimes a=s^{1}_{0}d^{1}_{0}(h\otimes a)\ \text{and}\ d^{2}_{2}s^{2}_{0}(h\otimes a)=1_{H}\otimes d(h)a=s^{1}_{0}d^{1}_{1}(h\otimes a) 
\]
for all $h\in H$ and $a\in A$. Finally, we have 
\[
d^{1}_{0}d^{2}_{1}(h\otimes h'\otimes a)=\epsilon(hh')a=d^{1}_{0}d^{2}_{0}(h\otimes h'\otimes a),\ d^{1}_{0}d^{2}_{2}(h\otimes h'\otimes a)=\epsilon(h)d(h')a=d^{1}_{1}d^{2}_{0}(h\otimes h'\otimes a)
\]
and $d^{1}_{1}d^{2}_{2}(h\otimes h'\otimes a)=d(hh')a=d^{1}_{1}d^{2}_{1}(h\otimes h'\otimes a)$ for all $h,h'\in H$ and $a\in A$.

Therefore, we have obtained a 2-truncated simplicial cocommutative color Hopf algebra and then, as said in Remark \ref{oss:2cosk}, we obtain a simplicial cocommutative color Hopf algebra $\mathcal{H}$ with first three components $H_{0}$, $H_{1}$ and $H_{2}$ defined as before. 

By definition $M(\mathcal{H})_{0}:=H_{0}=A$, $M(\mathcal{H})_{1}:=\mathrm{Hker}(d^{1}_{0})$ and $M(\mathcal{H})_{2}:=\mathrm{Hker}(d^{2}_{0})\cap\mathrm{Hker}(d^{2}_{1})$. Clearly, $\mathrm{Hker}(d^{1}_{0})=H\otimes\Bbbk1_{A}\cong H$ and we can show that $M(\mathcal{H})_{2}=\Bbbk(1_{H}\otimes1_{H}\otimes1_{A})$. As for $\mathrm{Hker}(d^{1}_{0})$, it is easy to show that $\mathrm{Hker}(d^{2}_{0})=H\otimes\Bbbk(1_{H}\otimes1_{A})$.

\begin{invisible}
An element $h\otimes h'\otimes a\in H\rtimes_{*}(H\rtimes A)$ is in $\mathrm{Hker}(d^{2}_{0})$ if
\[
\begin{split}
\phi(|h_{2}|,|h'_{1}|)\phi(|h_{2}|,|a_{1}|)\phi(|h'_{2}|,|a_{1}|)h_{1}\otimes h'_{1}\otimes a_{1}\otimes d^{2}_{0}(h_{2}\otimes h'_{2}\otimes a_{2})=h\otimes h'\otimes a\otimes 1_{H}\otimes 1_{A}
\end{split}
\]
but the first member is
\[
\phi(|h_{2}|,|h'_{1}|)\phi(|h_{2}|,|a_{1}|)\phi(|h'_{2}|,|a_{1}|)h_{1}\otimes h'_{1}\otimes a_{1}\otimes\epsilon(h_{2})h'_{2}\otimes a_{2}=\phi(|h'_{2}|,|a_{1}|)h\otimes h'_{1}\otimes a_{1}\otimes h'_{2}\otimes a_{2}
\]
and then we must have $\Delta_{H\rtimes A}(h'\otimes a)=h'\otimes a\otimes1_{H}\otimes 1_{A}$ from which $h'\otimes a=\epsilon(h'a)1_{H}\otimes 1_{A}$ and then $\mathrm{Hker}(d^{2}_{0})=H\otimes\Bbbk(1_{H}\otimes1_{A})$. 
\end{invisible}

But now an element $h\otimes1_{H}\otimes1_{A}$ is in $\mathrm{Hker}(d^{2}_{1})$ if 
\[
h_{1}\otimes1_{H}\otimes1_{A}\otimes d^{2}_{1}(h_{2}\otimes1_{H}\otimes1_{A})=h\otimes1_{H}\otimes1_{A}\otimes1_{H}\otimes1_{A},\ \text{i.e},\ h_{1}\otimes1_{H}\otimes1_{A}\otimes h_{2}\otimes1_{A}=h\otimes1_{H}\otimes1_{A}\otimes1_{H}\otimes1_{A} 
\]
and then $h=\epsilon(h)1_{H}$. Thus $M(\mathcal{H})_{2}=\Bbbk(1_{H}\otimes1_{H}\otimes1_{A})$. Moreover, by Corollary \ref{cor:simpobj}, we know that $M(\mathcal{H})_{i}=\Bbbk1$ for $i>3$ and $M(\mathcal{H})_{3}=\mathrm{Hker}(\partial_{2})$ with $\partial_{3}:\mathrm{Hker}(\partial_{2})\to M(\mathcal{H})_{2}$ given by the inclusion. Then $M(\mathcal{H})_{3}\cong\Bbbk1$ and so $\mathcal{H}$ has Moore complex with length one.
\end{proof}

Note that, in the previous proof, we need a 2-truncated simplicial cocommutative color Hopf algebra because, if we stop at the 1-truncation level and we apply the 1-coskeleton functor, we obtain a simplicial cocommutative color Hopf algebra with Moore complex of length two and there is no cancellation. 

Finally, denoting by $\mathrm{Simp}(\Hc)|_{l=1}$ the category of simplicial cocommutative color Hopf algebras with Moore complex of length one, we can show the foretold equivalence of categories:

\begin{proposition}
There is an equivalence between $\mathrm{HXMod}(\Hc)$ and $\mathrm{Simp}(\Hc)|_{l=1}$.
\end{proposition}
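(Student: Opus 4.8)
The plan is to construct a pair of functors between $\mathrm{HXMod}(\Hc)$ and $\mathrm{Simp}(\Hc)|_{l=1}$ and show they are mutually quasi-inverse. On objects, the functor $\Phi:\mathrm{HXMod}(\Hc)\to\mathrm{Simp}(\Hc)|_{l=1}$ is exactly the construction of Proposition~\ref{functor2}: a color Hopf crossed module $(A,H,d)$ is sent to the simplicial object obtained by applying the $2$-coskeleton functor to the $2$-truncated simplicial object \eqref{simplicolorprop}; by Proposition~\ref{functor2} this has Moore complex of length one, so it indeed lies in $\mathrm{Simp}(\Hc)|_{l=1}$. In the other direction, $\Psi:\mathrm{Simp}(\Hc)|_{l=1}\to\mathrm{HXMod}(\Hc)$ is the construction of Proposition~\ref{functor1}: a simplicial cocommutative color Hopf algebra $\mathcal{H}$ with Moore complex of length one is sent to the color Hopf crossed module $(H_{0},M(\mathcal{H})_{1},\partial_{1})$ with the $H_{0}$-action given there. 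First I would check that both assignments are functorial: for $\Psi$, a simplicial morphism $f_{\bullet}:\mathcal{H}\to\mathcal{H}'$ restricts to $f_{0}:H_{0}\to H_{0}'$ and, since $f_{1}$ commutes with $d^{1}_{0}$, to a morphism $\overline{f_{1}}:M(\mathcal{H})_{1}\to M(\mathcal{H}')_{1}$; that these form a morphism of color Hopf crossed modules is immediate from $f_{\bullet}$ being a simplicial map (it intertwines the $s^{1}_{0}$'s, hence the actions, and the $d^{1}_{1}$'s, hence $\partial_{1}$). For $\Phi$, functoriality follows because the $2$-coskeleton functor is functorial and a morphism $(\alpha,\beta)$ of crossed modules induces compatible maps $A\to A'$, $H\rtimes A\to H'\rtimes A'$, $H\rtimes_{*}(H\rtimes A)\to H'\rtimes_{*}(H'\rtimes A')$ commuting with all the face and degeneracy operators.

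Next I would establish the two natural isomorphisms $\Psi\circ\Phi\cong\mathrm{Id}_{\mathrm{HXMod}(\Hc)}$ and $\Phi\circ\Psi\cong\mathrm{Id}_{\mathrm{Simp}(\Hc)|_{l=1}}$. For the first: starting from $(A,H,d)$, applying $\Phi$ gives the simplicial object with $H_{0}=A$, $H_{1}=H\rtimes A$, and $\Psi$ of that is $(H_{0},\mathrm{Hker}(d^{1}_{0}),\partial_{1})=(A,M(\mathcal{H})_{1},\partial_{1})$. Now $\mathrm{Hker}(d^{1}_{0})=H\otimes\Bbbk1_{A}\cong H$ (this identification was already noted in the proof of Proposition~\ref{functor2}), $\partial_{1}$ restricts $d^{1}_{1}(h\otimes a)=d(h)a$ to $h\otimes 1_{A}\mapsto d(h)$, which matches $d$ under the identification, and the $A$-action $k\otimes(h\otimes1_{A})\mapsto\phi(|k_{2}|,|h|)s^{1}_{0}(k_{1})(h\otimes1_{A})s^{1}_{0}(S(k_{2}))$ unwinds, using the multiplication in $H\rtimes A$ and $s^{1}_{0}(k)=1_{H}\otimes k$, precisely to $k\cdot h$; so we get an isomorphism of color Hopf crossed modules, natural in $(A,H,d)$. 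For the second: starting from $\mathcal{H}$ with Moore complex of length one, $\Psi$ produces $(H_{0},M(\mathcal{H})_{1},\partial_{1})$ and then $\Phi$ produces a simplicial object whose $2$-truncation is $M(\mathcal{H})_{1}\rtimes_{*}(M(\mathcal{H})_{1}\rtimes H_{0})\rightrightarrows M(\mathcal{H})_{1}\rtimes H_{0}\rightrightarrows H_{0}$. Here I would use the semi-direct product decompositions derived just before Lemma~\ref{mapf2}: $H_{1}\cong M(\mathcal{H})_{1}\rtimes_{0}M(\mathcal{H})_{0}$ and $H_{2}\cong(M(\mathcal{H})_{2}\rtimes_{1}M(\mathcal{H})_{1})\rtimes_{0}(M(\mathcal{H})_{1}\rtimes_{0}M(\mathcal{H})_{0})$, which under the length-one hypothesis ($M(\mathcal{H})_{2}=\Bbbk$) collapse to $H_{1}\cong M(\mathcal{H})_{1}\rtimes H_{0}$ and $H_{2}\cong M(\mathcal{H})_{1}\rtimes_{*}(M(\mathcal{H})_{1}\rtimes H_{0})$, compatibly with the $d_{i}$'s and $s_{i}$'s. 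Then, because both $\mathcal{H}$ and $\Phi\Psi(\mathcal{H})$ are $2$-coskeletal (the former since its Moore complex has length one, by Theorem~\ref{thm:lengthMoore} and Corollary~\ref{cor:simpobj}, the latter by construction via $\mathrm{cosk}_{2}$), an isomorphism of their $2$-truncations extends uniquely to an isomorphism of simplicial objects, natural in $\mathcal{H}$.

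The main obstacle I expect is the verification that, for a length-one Moore complex simplicial object $\mathcal{H}$, the $2$-truncated simplicial structure really is isomorphic to the one produced by $\Phi\Psi$ — i.e.\ matching up all three faces $d^{2}_{0},d^{2}_{1},d^{2}_{2}$ and both degeneracies under the iterated semi-direct product isomorphism $H_{2}\cong M(\mathcal{H})_{1}\rtimes_{*}(M(\mathcal{H})_{1}\rtimes H_{0})$, and checking that the ``$*$''-action appearing in $H_2$ is the one induced by $\partial_1$ as in the proof of Proposition~\ref{functor2}. This is where the cocommutativity identities \eqref{xiofcoalg}, \eqref{xiDelta}, \eqref{xiantipode} and crossed-module axioms \eqref{cm1}, \eqref{cm2} all get used, and it is essentially a bookkeeping exercise with the explicit formulas for $f^{n}_{i}$ and the decompositions already recorded before Lemma~\ref{mapf2}; Lemma~\ref{mapf2} and Proposition~\ref{functor1} are precisely the nontrivial inputs that make the action on $H_{2}$ come out right. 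Once the two $2$-truncations are identified, coskeletality does the rest. The fact that $\Phi$ and $\Psi$ preserve morphisms and that the isomorphisms above are natural is then routine, and the conclusion is that $\mathrm{HXMod}(\Hc)$ and $\mathrm{Simp}(\Hc)|_{l=1}$ are equivalent categories, generalizing \cite[Theorem 5.5]{Emir}.
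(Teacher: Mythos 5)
Your functors are the same as the paper's ($\Phi=G$ from Proposition \ref{functor2}, $\Psi=F$ from Proposition \ref{functor1}), and your treatment of $\Psi\Phi\cong\mathrm{Id}$ coincides with the paper's: identify $\mathrm{Hker}(d^{1}_{0})=H\otimes\Bbbk 1_{A}\cong H$, check that $\partial_{1}$ and the action match $d$ and $\cdot$. Where you diverge is the other composite. The paper never compares truncations at all: it observes that $\mathcal{H}$ and $\mathcal{H}'=GF(\mathcal{H})$ both have Moore complex of length one with $M(\mathcal{H}')_{0}=M(\mathcal{H})_{0}$ and $M(\mathcal{H}')_{1}\cong M(\mathcal{H})_{1}$, and concludes by conservativity of the Moore functor $M:\mathrm{Simp}(\Hc)\to\mathrm{Ch}(\Hc)$ (\cite{bou2}). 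You instead propose an explicit isomorphism of the $2$-truncations, built from the iterated semidirect decompositions recorded before Lemma \ref{mapf2}, followed by $2$-coskeletality to propagate it to all degrees. Your route, if completed, yields a more explicit natural isomorphism (closer in spirit to Emir's original argument \cite{Emir}), at the price of the face/degeneracy bookkeeping you acknowledge; the paper's route buys brevity by pushing all of that into the single conservativity statement.

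There is, however, a gap in the step you lean on to avoid degrees $\geq 3$: the claim that an arbitrary $\mathcal{H}$ with Moore complex of length one is $2$-coskeletal does not follow from Theorem \ref{thm:lengthMoore} and Corollary \ref{cor:simpobj} alone, since those results only compute $M(\mathrm{cosk}_{2}\mathrm{tr}_{2}(X))$; they say nothing yet about $X$ itself. To close it you must show that the unit $\eta:\mathcal{H}\to\mathrm{cosk}_{2}\mathrm{tr}_{2}(\mathcal{H})$ is an isomorphism: $\eta$ is the identity in degrees $\leq 2$, Theorem \ref{thm:lengthMoore} gives $M(\mathrm{cosk}_{2}\mathrm{tr}_{2}(\mathcal{H}))_{3}=\mathrm{Ker}(\partial_{2})\subseteq M(\mathcal{H})_{2}=\Bbbk$ and $\Bbbk$ in higher degrees, so $M(\eta)$ is an isomorphism, and then one invokes exactly the conservativity of $M$ — i.e.\ the same key input the paper uses to bypass your whole truncation comparison. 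So your argument is repairable, but its crucial shortcut is not self-contained as cited. Separately, the identification of the $2$-truncations (matching $d^{2}_{0},d^{2}_{1},d^{2}_{2},s^{2}_{0},s^{2}_{1}$ and checking that the $*$-action produced by Proposition \ref{prop:fiso2} is the one induced by $\partial_{1}$, via Lemma \ref{mapf2} and the computation in Proposition \ref{functor1}) is only sketched; it is plausible and parallels \cite{Emir}, but it is the bulk of the work on your route and would need to be written out.
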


\begin{proof}
Using Proposition \ref{functor1} we obtain a functor $F:\mathrm{Simp}(\Hc)|_{l=1}\to\mathrm{HXMod}(\Hc)$ defined as
\[
\begin{tikzcd}
	(\mathcal{H}: && H_{3} & H_{2} & H_{1} & H_{0}
	\arrow["d_{0}"{description},from=1-5, to=1-6]
	\arrow[shift left=2, from=1-5, to=1-6, "d_{1}"]
	\arrow[shift left=2, from=1-6, to=1-5,"s_{0}"]
	\arrow["d_{0}"{description}, from=1-4, to=1-5]
	\arrow["d_{1}"{description}, shift left=3, from=1-4, to=1-5]
	\arrow[shift left=5, from=1-4, to=1-5,"d_{2}"]
	\arrow["s_{0}"{description}, shift left=3, from=1-5, to=1-4]
	\arrow[shift left=5, from=1-5, to=1-4, "s_{1}"]
	\arrow[shift left=2, from=1-3, to=1-4]
	\arrow[shift left=6, from=1-3, to=1-4]
	\arrow[shift left=4, from=1-3, to=1-4]
	\arrow[from=1-3, to=1-4]
	\arrow[shift left=2, from=1-4, to=1-3]
	\arrow[shift left=4, from=1-4, to=1-3]
	\arrow[shift left=6, from=1-4, to=1-3]
	\arrow[dotted, no head, from=1-1, to=1-3])
\end{tikzcd}\mapsto(H_{0},M(\mathcal{H})_{1}=\mathrm{Hker}(d^{1}_{0}),\partial_{1}:M(\mathcal{H})_{1}\to H_{0})
\]
and, using Proposition \ref{functor2}, we obtain a functor $G:\mathrm{HXMod}(\Hc)\to\mathrm{Simp}(\Hc)|_{l=1}$ defined as
\[
(A,H,d:H\to A)\mapsto\begin{tikzcd}
	(\mathcal{H}: && H\rtimes_{*}(H\rtimes A) & H\rtimes A & A
	\arrow[dotted, no head, from=1-1, to=1-3]
	\arrow["d_{0}"{description}, from=1-4, to=1-5]
	\arrow[shift left=2, from=1-4, to=1-5, "d_{1}"]
	\arrow[shift left=2, from=1-5, to=1-4, "s_{0}"]
	\arrow["d_{0}"{description}, from=1-3, to=1-4]
	\arrow["d_{1}"{description}, shift left=3, from=1-3, to=1-4]
	\arrow[shift left=5, from=1-3, to=1-4, "d_{2}"]
	\arrow["s_{0}"{description}, shift left=3, from=1-4, to=1-3]
	\arrow[shift left=5, from=1-4, to=1-3, "s_{1}"]).
\end{tikzcd}
\]
The functors $F$ and $G$ form an equivalence of categories. Indeed, we can compute
\[
(A,H,d:H\to A)\overset{G}\mapsto\begin{tikzcd}
	(\mathcal{H}: && H\rtimes_{*}(H\rtimes A) & H\rtimes A & A
	\arrow[dotted, no head, from=1-1, to=1-3]
	\arrow["d_{0}"{description}, from=1-4, to=1-5]
	\arrow[shift left=2, from=1-4, to=1-5, "d_{1}"]
	\arrow[shift left=2, from=1-5, to=1-4, "s_{0}"]
	\arrow["d_{0}"{description}, from=1-3, to=1-4]
	\arrow["d_{1}"{description}, shift left=3, from=1-3, to=1-4]
	\arrow[shift left=5, from=1-3, to=1-4, "d_{2}"]
	\arrow["s_{0}"{description}, shift left=3, from=1-4, to=1-3]
	\arrow[shift left=5, from=1-4, to=1-3, "s_{1}"])
\end{tikzcd}
\]
\[
\ \ \ \ \  \overset{F}\mapsto(A,M(\mathcal{H})_{1}=\mathrm{Hker}(d^{1}_{0}),\partial_{1}:M(\mathcal{H})_{1}\to A)
\]
and, as noted in the proof of Proposition \ref{functor2}, we have $M(\mathcal{H})_{1}=H\otimes\Bbbk1_{A}\cong H$ and $\partial_{1}=d^{1}_{1}|_{H\otimes\Bbbk1_{A}}=d$, hence clearly $FG\cong\mathrm{Id}$. Moreover, we can compute
\[
\begin{tikzcd}
	(\mathcal{H}: && H_{3} & H_{2} & H_{1} & H_{0}
	\arrow["d_{0}"{description},from=1-5, to=1-6]
	\arrow[shift left=2, from=1-5, to=1-6, "d_{1}"]
	\arrow[shift left=2, from=1-6, to=1-5,"s_{0}"]
	\arrow["d_{0}"{description}, from=1-4, to=1-5]
	\arrow["d_{1}"{description}, shift left=3, from=1-4, to=1-5]
	\arrow[shift left=5, from=1-4, to=1-5,"d_{2}"]
	\arrow["s_{0}"{description}, shift left=3, from=1-5, to=1-4]
	\arrow[shift left=5, from=1-5, to=1-4, "s_{1}"]
	\arrow[shift left=2, from=1-3, to=1-4]
	\arrow[shift left=6, from=1-3, to=1-4]
	\arrow[shift left=4, from=1-3, to=1-4]
	\arrow[from=1-3, to=1-4]
	\arrow[shift left=2, from=1-4, to=1-3]
	\arrow[shift left=4, from=1-4, to=1-3]
	\arrow[shift left=6, from=1-4, to=1-3]
	\arrow[dotted, no head, from=1-1, to=1-3])
\end{tikzcd}\overset{F}\mapsto(H_{0},M(\mathcal{H})_{1}=\mathrm{Hker}(d^{1}_{0}),\partial_{1}:M(\mathcal{H})_{1}\to H_{0})
\]
\[
\overset{G}\mapsto\begin{tikzcd}
	(\mathcal{H}': && (\mathrm{Hker}(d^{1}_{0})\rtimes_{*}(\mathrm{Hker}(d^{1}_{0})\rtimes H_{0})) & (\mathrm{Hker}(d^{1}_{0})\rtimes H_{0}) & H_{0}
	\arrow[dotted, no head, from=1-1, to=1-3]
	\arrow["d'_{0}"{description}, from=1-4, to=1-5]
	\arrow[shift left=3, from=1-4, to=1-5, "d'_{1}"]
	\arrow[shift left=3, from=1-5, to=1-4, "s'_{0}"]
	\arrow["d'_{0}"{description}, from=1-3, to=1-4]
	\arrow["d'_{1}"{description}, shift left=4, from=1-3, to=1-4]
	\arrow[shift left=6.5, from=1-3, to=1-4, "d'_{2}"]
	\arrow["s'_{0}"{description}, shift left=4, from=1-4, to=1-3]
	\arrow[shift left=6.5, from=1-4, to=1-3, "s'_{1}"])
\end{tikzcd}
\]
and both $\mathcal{H}'$ and $\mathcal{H}$ have Moore complex of length one, i.e., $M(\mathcal{H}')_{i}=M(\mathcal{H})_{i}=\Bbbk1$ for all $i\geq2$. Furthermore, $M(\mathcal{H}')_{0}=H_{0}=M(\mathcal{H})$ and $M(\mathcal{H}')_{1}=\mathrm{Hker}(d'^{1}_{0})=\mathrm{Hker}(d^{1}_{0})\otimes\Bbbk1_{H_{0}}\cong\mathrm{Hker}(d^{1}_{0})=M(\mathcal{H})_{1}$.
Hence $\mathcal{H}'$ and $\mathcal{H}$ are simplicial cocommutative color Hopf algebras which have isomorphic Moore complex, so they are isomorphic since the Moore functor $M:\mathrm{Simp}(\Hc)\to\mathrm{Ch}(\Hc)$ reflects isomorphisms, see \cite{bou2}.
\end{proof}

Note that, using Corollary \ref{corollary1}, we also obtain that $\mathrm{XMod}(\Hc)$ and $\mathrm{Simp}(\Hc)|_{l=1}$ are equivalent categories. This result can also be deduced from \cite[Proposition 4.4]{Bohm2}. Finally, we obtain:

\begin{corollary}
    The category $\mathrm{Simp}(\Hc)|_{l=1}$ is semi-abelian.
\end{corollary}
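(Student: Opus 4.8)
The plan is to obtain the semi-abelianness of $\mathrm{Simp}(\Hc)|_{l=1}$ as an immediate consequence of the equivalences already established. First I would recall that by the last Proposition we have an equivalence of categories $\mathrm{HXMod}(\Hc)\cong\mathrm{Simp}(\Hc)|_{l=1}$, and by Corollary \ref{corollary2} the category $\mathrm{HXMod}(\Hc)$ is semi-abelian. Since being semi-abelian is a property invariant under equivalence of categories (all the defining conditions---pointedness, finite (co)completeness, Barr-exactness, protomodularity---are preserved and reflected by equivalences, as they are expressed purely in terms of limits, colimits and exactness data), the category $\mathrm{Simp}(\Hc)|_{l=1}$ is semi-abelian as well.

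More explicitly, the proof is a single sentence: combine the equivalence $\mathrm{HXMod}(\Hc)\cong\mathrm{Simp}(\Hc)|_{l=1}$ of the preceding Proposition with the fact, recorded in Corollary \ref{corollary2}, that $\mathrm{HXMod}(\Hc)$ is semi-abelian; equivalences of categories transport the semi-abelian structure, hence $\mathrm{Simp}(\Hc)|_{l=1}$ is semi-abelian. Alternatively, one could route through $\mathrm{XMod}(\Hc)$: by Corollary \ref{corollary1} we have $\mathrm{HXMod}(\Hc)\cong\mathrm{XMod}(\Hc)$, and $\mathrm{XMod}(\Hc)$ is semi-abelian (again Corollary \ref{corollary2}, or directly because $\mathrm{XMod}(\Cc)$ is semi-abelian whenever $\Cc$ is, by \cite{MG2} and \cite[Lemma 4.1]{MG3}), and then transport along the equivalence with $\mathrm{Simp}(\Hc)|_{l=1}$ noted just after the last Proposition.

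There is essentially no obstacle here: the only thing to justify is that "semi-abelian" is a property stable under equivalence of categories, which is standard and follows because each ingredient (having a zero object, having all finite limits and colimits, being Barr-exact, being protomodular in the sense that the Split Short Five Lemma holds) is stated in purely categorical terms and is therefore preserved by any equivalence. I would simply state this and cite the equivalence and Corollary \ref{corollary2}; no computation is required.

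\begin{proof}
By the previous Proposition the category $\mathrm{Simp}(\Hc)|_{l=1}$ is equivalent to $\mathrm{HXMod}(\Hc)$, which is semi-abelian by Corollary \ref{corollary2}. Since the property of being semi-abelian (pointed, finitely cocomplete, Barr-exact and protomodular) is invariant under equivalence of categories, it follows that $\mathrm{Simp}(\Hc)|_{l=1}$ is semi-abelian.
\end{proof}
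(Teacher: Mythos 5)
Your proposal is correct and follows essentially the same route as the paper: the paper also combines Corollary \ref{corollary2} (that $\mathrm{HXMod}(\Hc)$, equivalently $\mathrm{XMod}(\Hc)$, is semi-abelian) with the equivalence $\mathrm{HXMod}(\Hc)\simeq\mathrm{Simp}(\Hc)|_{l=1}$ established in the preceding proposition, the invariance of semi-abelianness under equivalence being left implicit there and merely made explicit in your write-up.
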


\begin{proof}
By Corollary \ref{corollary2} we know that both $\mathrm{XMod}(\Hc)$ and $\mathrm{HXMod}(\Hc)$ are semi-abelian, hence also $\mathrm{Simp}(\Hc)|_{l=1}$ is semi-abelian.
\end{proof}

\begin{remark}
    In \cite[Definitions 6.5 and 6.6]{Emir} the category of 2-crossed modules of cocommutative Hopf algebras is introduced and in \cite[Theorem 6.17]{Emir} it is shown that this category is equivalent to the category of simplicial cocommutative Hopf algebras with Moore complex of length 2. It would be interesting to extend this result to the category $\Hc$ introducing the notion of 2-color Hopf crossed module. We leave this as a possible future project.
    
    Moreover, as it is said in \cite[page 301]{Sterck}, the notions of 2-crossed module and Hopf crossed square for the category of cocommutative Hopf algebras are not equivalent, hence it is reasonable to think that 2-color Hopf crossed modules and color Hopf crossed squares could be two not equivalent generalizations.
\end{remark}

\noindent\textbf{Acknowledgements}. This paper was written while the author was member of the “National Group for Algebraic and Geometric Structures and their Applications” (GNSAGA-INdAM). He was partially supported by the Ministry for University and Research (MUR) within the National Research Project (PRIN 2022) "Structures for Quivers, Algebras and Representations (SQUARE)". The author would like to thank A. Ardizzoni and A. S. Cigoli for the careful reading of this paper and for meaningful comments.

\end{document}